\theoremstyle{plain}
\newtheorem{prop}{Proposition}[section]
\newtheorem{thm}[prop]{Theorem}
\newtheorem{conj}[prop]{Conjecture}
\newtheorem{cor}[prop]{Corollary}
\newtheorem{lemma}[prop]{Lemma}
\theoremstyle{definition}
\newtheorem{example}[prop]{Example}
\newtheorem{defn}[prop]{Definition}
\theoremstyle{remark}
\newtheorem{remark}[prop]{Remark}
\newcommand{\Aut}{\operatorname{Aut}}
\newcommand{\coker}{\operatorname{coker}}
\newcommand{\conjug}{^\textbf{c}}
\newcommand{\defin}{\textbf}
\newcommand{\Diff}{\operatorname{Diff}}
\newcommand{\dist}{\operatorname{dist}}
\newcommand{\End}{\operatorname{End}}
\newcommand{\ev}{\operatorname{ev}}
\newcommand{\Fred}{\operatorname{Fred}}
\newcommand{\GW}{\operatorname{GW}}
\newcommand{\Hom}{\operatorname{Hom}}
\newcommand{\im}{\operatorname{im}}
\newcommand{\ind}{\operatorname{ind}}
\newcommand{\Jfix}{J_{\operatorname{fix}}}
\newcommand{\PD}{\operatorname{PD}}
\renewcommand{\Re}{\operatorname{Re}}
\newcommand{\reg}{{\operatorname{reg}}}
\newcommand{\transpose}{{\operatorname{T}}}
\newcommand{\vol}{d\operatorname{vol}}
\newcommand{\GL}{\operatorname{GL}}
\newcommand{\Ortho}{\operatorname{O}}
\newcommand{\U}{\operatorname{U}}
\newcommand{\SO}{\operatorname{SO}}
\newcommand{\CC}{{\mathbb C}}
\newcommand{\NN}{{\mathbb N}}
\newcommand{\QQ}{{\mathbb Q}}
\newcommand{\RR}{{\mathbb R}}
\newcommand{\TT}{{\mathbb T}}
\newcommand{\ZZ}{{\mathbb Z}}
\newcommand{\jJ}{{\mathcal J}}
\newcommand{\lL}{{\mathcal L}}
\newcommand{\mM}{{\mathcal M}}
\newcommand{\oO}{{\mathcal O}}
\newcommand{\tT}{{\mathcal T}}
\newcommand{\uU}{{\mathcal U}}
\newcommand{\1}{\mathds{1}}
\newcommand{\p}{\partial}
\renewcommand{\dbar}{\bar{\partial}}
\newcommand{\tame}{{^{\operatorname{tame}}}}
\newcommand{\comp}{{^{\operatorname{comp}}}}
\numberwithin{equation}{section}
\definecolor{blue}{rgb}{0,0,1}
\definecolor{red}{rgb}{1,0,0}
\definecolor{green}{rgb}{0,.7,0}
\newcommand{\rev}[1]{#1}
\title[Generic transversality for unbranched covers of holomorphic curves]{Generic transversality for unbranched covers of closed pseudoholomorphic curves}
\author{Chris Gerig}
\address{Mathematics Department\\
970 Evans Hall\\
University of California\\
Berkeley CA 94720\\
USA}
\email{cgerig@berkeley.edu}
\author{Chris Wendl}
\address{Institut f\"ur Mathematik \\ 
Humboldt-Universit\"at zu Berlin \\
Unter den Linden 6 \\
10099 Berlin \\ 
Germany}
\email{wendl@math.hu-berlin.de}
\thanks{CG is partially supported by NSF grants DMS-0838703 and DMS-1105820.
CW was partially supported during this work by a Royal Society University Research Fellowship and a 
Leverhulme Research Project Grant.}
\subjclass[2010]{Primary 32Q65; Secondary 57R17, 53D45}
\begin{document}

\begin{abstract}
We prove that in closed almost complex manifolds of any dimension, 
generic perturbations of
the almost complex structure suffice to achieve transversality
for all unbranched multiple covers of simple pseudoholomorphic curves 
with deformation index zero.
A corollary is that the Gromov-Witten invariants (without descendants) of
symplectic $4$-manifolds can always be computed as a signed and weighted
count of honest $J$-holomorphic curves for generic tame~$J$: in particular,
each such invariant is an integer divided by a weighting factor that depends only
on the divisibility of the corresponding homology class.
The transversality proof is based on an analytic 
perturbation technique, originally due to Taubes.
\end{abstract}

\maketitle

\tableofcontents

\section{Introduction}
\label{sec:intro}

The Gromov-Witten invariants of closed symplectic manifolds are defined in
principle by counting $J$-holomorphic curves for generic tame almost complex
structures~$J$.  One of the main technical hurdles in this
definition is that moduli spaces of $J$-holomorphic curves are not generally
manifolds of the ``expected'' dimension unless multiply covered curves can
be excluded; thus in practice, the definition usually requires more 
sophisticated
techniques such as virtual cycles, abstract multivalued perturbations, or
stabilizing divisors, see e.g.~\cites{FukayaOno,LiTian,Ruan:virtual,
Siebert:GW,CieliebakMohnke:transversality,IonelParker:virtual,HWZ:GW}.

It is nonetheless interesting to ask under what circumstances
the ``classical'' technique
of perturbing $J$ generically suffices for a complete description of
moduli spaces of multiply covered curves.  Results of this nature are
desirable for several reasons: one is that the resulting definition of
the Gromov-Witten invariants is simpler to understand and to apply.
Another is that the relationship between simple curves and their
multiple covers can reveal nontrivial relations among Gromov-Witten
invariants that cannot be seen by more abstract techniques; 
one example of this phenomenon is the Gopakumar-Vafa conjecture on
symplectic Calabi-Yau $3$-folds,
see \cites{GopakumarVafa,BryanPandharipande:BPS,BryanPandharipande:local,
IonelParker:GV}.
While moduli spaces of multiply covered curves cannot
generally achieve regularity in the usual sense, it is sometimes enough
to show that they are \emph{as regular as possible}.  A simple
$J$-holomorphic curve $u$
with deformation index~$0$ is called ``super-rigid'' if, roughly
speaking, the set of all covers of $u$ is an open subset in the
moduli space of all $J$-holomorphic curves (see \S\ref{sec:main} for a more
precise definition), so in particular, no sequence
of curves geometrically distinct from $u$ can converge to any cover of~$u$.
The index relations between simple $J$-holomorphic
curves and their multiple covers make the following conjecture
plausible:\footnote{After this article was submitted for publication,
the second author produced a preprint \cite{Wendl:super} that proves Conjecture~\ref{conj:super}
in all dimensions greater than four, together with a substantial generalization of
Theorem~\ref{thm:super}, using different techniques based on the
Sard-Smale theorem and representation theory.}

\begin{conj}
\label{conj:super}
On any closed symplectic manifold $(M,\omega)$ of real dimension at least
four, there exists a Baire
subset $\jJ_\reg$ in the space of smooth $\omega$-tame almost complex
structures such that for all $J \in \jJ_\reg$, every closed, connected
and simple $J$-holomorphic curve with deformation index~$0$ is
super-rigid.
\end{conj}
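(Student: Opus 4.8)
The plan is to reduce the super-rigidity of a simple $J$-holomorphic curve $u : \Sigma \to M$ with $\operatorname{ind}(u) = 0$ to a genericity statement about its normal Cauchy--Riemann operator $\mathbf{D}_u^N$ together with all of its ``twists,'' and then to establish that statement by an analytic perturbation argument of Taubes' type. Assuming $u$ is itself Fredholm regular (true for generic $J$, since simple curves are generically regular), it is essentially the content of the definition (cf.~\S\ref{sec:main}) that $u$ is super-rigid if and only if $\mathbf{D}_{\tilde u}^N$ has trivial kernel for every connected holomorphic branched cover $\tilde u = u \circ \varphi$ of $u$. The first step is a structural decomposition of this operator: since the normal bundle of $\tilde u$ is $\varphi^* N_u$ and $\mathbf{D}_{\tilde u}^N = \varphi^* \mathbf{D}_u^N$, its equivariance under the monodromy of $\varphi$ splits it, after a gauge transformation, as a direct sum of \emph{twisted} operators $\mathbf{D}_u^N \otimes E$ on $\Sigma$, where $E$ ranges over a family of holomorphic bundles determined by the covering. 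When $\varphi$ is \emph{unbranched} the $E$ are flat unitary bundles over $\Sigma$ (in the simplest case, flat line bundles of finite order), and this is exactly the case settled by the main theorem of the present paper; the index computation $\operatorname{ind}(\mathbf{D}_{\tilde u}^N) = d\cdot\operatorname{ind}(\mathbf{D}_u^N) - (n-1)Z = -(n-1)Z$, where $d = \deg\varphi$, $\dim_\RR M = 2n$, and $Z \geq 0$ is the total branching order of $\varphi$, is what makes the conjecture plausible in dimension $\geq 4$: every twisted operator then has nonpositive index, with equality precisely for unbranched covers, so injectivity (never surjectivity) is the best one can hope for, and injectivity of $\mathbf{D}_{\tilde u}^N$ combined with regularity of $u$ is exactly super-rigidity.

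For the genericity argument, fix an energy bound $E_0$. By Gromov compactness only finitely many simple index-$0$ curves have energy below $E_0$ for generic $J$, and each has countably many covers; so it suffices to make a single twisted operator $\mathbf{D}_u^N \otimes E$ injective by a perturbation of $J$ with meager complement, and then take a countable intersection over $E_0$, homology classes, curves and covers. To kill $\ker(\mathbf{D}_u^N \otimes E)$ one sets up the universal space $\bigl\{(J,\eta) : (\mathbf{D}_{u_J}^N\otimes E)\eta = 0,\ \|\eta\|_{L^2}=1\bigr\}$, shows it is a Banach manifold, and shows its projection to $\jJ$ is a Fredholm map of negative index, so that a Baire-generic $J$ lies outside its image. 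Transversality of the universal problem reduces to a pointwise statement: for every nonzero kernel element $\eta$ and cokernel element $\zeta$ of $\mathbf{D}_u^N\otimes E$, some perturbation $Y$ of $J$ makes the zeroth-order operator $\delta_Y\mathbf{D}_u^N$ satisfy $\langle \zeta,\, (\delta_Y\mathbf{D}_u^N)\,\eta\rangle_{L^2} \neq 0$. The crucial simplification, special to twists by flat bundles, is that the bundle $E$ cancels in this pairing: $\eta$ is a section of $N_u\otimes E$ and $\zeta$ (via the $L^2$-adjoint) is a section of a bundle built from $\overline{N_u}$ twisted by $E^*$, so the pointwise product $\eta\otimes\zeta$ is a section of a bundle over $\Sigma$ that does not involve $E$ at all. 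Thus a \emph{single} condition on $u$---the linear independence, as sections, of finitely many such products $\eta_i\otimes\zeta_j$, a condition of \emph{Petri type}---controls \emph{all} unbranched covers simultaneously, and one proves that this Petri condition holds for generic $J$, using that it holds automatically whenever the relevant operator has real-analytic coefficients (as near a real-analytic $J$), which one then perturbs away from while keeping the open condition.

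The branched case is where I expect the real difficulty to lie. When $\varphi$ has branch points, the twisting bundles $E$ are no longer flat: they carry a ``defect divisor'' supported at the branch values, the local model of $\mathbf{D}_u^N\otimes E$ near a branch value becomes genuinely singular, and the clean cancellation of $E$ in the Petri pairing breaks down. One must then (i) put the branched-cover normal operator into a tractable normal form near each branch point, (ii) show that the Petri-type nonvanishing still reduces to a condition intrinsic to $\mathbf{D}_u^N$ on $\Sigma$ --- now adapted to the divisor --- so that, as before, a condition on the simple curve alone controls a whole family of covers, and (iii) run the Sard--Smale/perturbation argument uniformly over the \emph{non-compact} family of branch configurations available on $\Sigma$, so that for each fixed degree and branching order the bad $J$ form a closed, nowhere-dense set, after which the countable intersection finishes the proof. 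The delicate ingredients are the spectral-flow (wall-crossing) analysis guaranteeing that kernels are created only in codimension $\geq 1$ as $J$ varies, and the proof that the Petri condition survives in the branched setting despite the branch points being free to roam over all of $\operatorname{im} u$; together with Taubes' analytic machinery (finiteness and analytic dependence of the simple curves below $E_0$ for real-analytic $J$, and unique continuation for kernel and cokernel elements), these are what would have to be pushed through to turn this outline into a proof.
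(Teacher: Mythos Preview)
The statement you are attempting to prove is labeled \emph{Conjecture}~\ref{conj:super} in the paper, and the paper does \emph{not} claim to prove it.  What the paper actually proves is Theorem~\ref{thm:super}, which is the special case of unbranched covers; the authors explicitly note (see the footnote in \S\ref{sec:intro} and Remark~\ref{remark:fail}) that a preliminary version of the paper claimed the full conjecture but contained gaps they were unable to fill.  So there is no ``paper's own proof'' of this statement to compare against, and your proposal is, as you yourself acknowledge in its final paragraph, an outline of what ``would have to be pushed through'' rather than a proof.

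Even restricted to the unbranched case, your sketch and the paper's argument are genuinely different.  You propose a Sard--Smale approach: set up a universal moduli space of pairs $(J,\eta)$ with $\eta$ a nonzero kernel element, prove it is a Banach manifold by verifying a Petri-type condition on products $\eta_i \otimes \zeta_j$, and conclude by projecting to~$\jJ$.  The paper does something quite different and does \emph{not} use Sard--Smale at all: for each fixed simple index-$0$ curve $u$ and each cover $\varphi$, it constructs an explicit one-parameter family $J_\tau$ of almost complex structures (Proposition~\ref{prop:nonlinear}) so that the normal operator becomes $\mathbf{D}_u^N + \tau B$ for a nowhere-vanishing antilinear isomorphism $B$ with a symmetry property (Proposition~\ref{prop:symmetry}), then uses a Bochner--Weitzenb\"ock estimate to show injectivity for $\tau \gg 0$ and analytic perturbation theory (Proposition~\ref{prop:linear} and the Appendix) to conclude injectivity for all but discretely many~$\tau$.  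Finiteness of unbranched covers of bounded degree is what lets one pick a single small $\tau$ working for all of them at once.  Your Petri approach is closer in spirit to later work on the full conjecture, and has the advantage that a single condition on $u$ controls all covers simultaneously; the paper's approach is more hands-on but, as Remark~\ref{remark:fail} explains, gives no control on how the good $\tau$-interval varies over the uncountable moduli of branched covers, which is precisely why the paper stops at the unbranched case.

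A smaller technical point: your claim that $\varphi^*\mathbf{D}_u^N$ ``splits as a direct sum of twisted operators $\mathbf{D}_u^N \otimes E$ on $\Sigma$'' via monodromy is only straightforward when the cover is regular (Galois); for arbitrary covers the decomposition is more subtle and involves induced representations, so this step would need more care even in the unbranched case.
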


Some special cases of this conjecture have been proved previously by
Lee-Parker \cites{LeeParker:structure,LeeParker:obstruction}
and Eftekhary \cite{Eftekhary:superrigidity}.  The techniques used in
the present paper are related to those of
\cites{LeeParker:structure,LeeParker:obstruction}, which also play a role
in the announced solution by Ionel and Parker to the Gopakumar-Vafa conjecture
\cite{IonelParker:GV}.

For an \emph{unbranched} cover of a simple curve, the super-rigidity
condition is equivalent to the usual notion of \emph{Fredholm regularity},  
and our main result (stated as Theorem~\ref{thm:super} below) is that
this can always be achieved by choosing $J$ generically.  This may
be seen as an initial step toward a proof of
Conjecture~\ref{conj:super} in full generality.
While the result holds in all dimensions, its consequences are especially
interesting in dimension four: as we will show in \S\ref{sec:GW}, it implies
that Gromov-Witten invariants without descendants 
in this setting can be computed without the
aid of domain-dependent or inhomogeneous perturbations, and they therefore
satisfy integrality conditions that are not apparent from the more
general definitions; see Theorem~\ref{thm:finitely} and
Corollary~\ref{cor:integral}.

Our proof is quite different from the methods that symplectic topologists 
typically use to establish transversality: it does not involve the
Sard-Smale theorem, but is instead based on an analytic perturbation theory 
technique introduced by Taubes in his definition of the Gromov 
invariants of symplectic $4$-manifolds \cite{Taubes:SWtoGr}.  
It works in the symplectic
category in all dimensions greater than two, but it does not work 
in the algebraic or complex category, i.e.~if we start with an integrable
complex structure~$J$, then our perturbation to achieve regularity will
\emph{always} make $J$ nonintegrable (see Remark~\ref{remark:peculiarities}).
The method also is not strictly limited to unbranched covers: for any given
cover of a simple curve with index~$0$, we will show how to
perturb $J$ such that the super-rigidity condition is achieved for the given 
cover.  Since spaces of unbranched covers do not have moduli, this suffices to
prove our main result, and it also lends hope that similar methods could
be used to prove Conjecture~\ref{conj:super} in full generality, though at
present it is not clear whether the kind of perturbation we define can
achieve super-rigidity for all branched covers at once in a space with
nontrivial moduli.\footnote{A preliminary version of this paper (under a
different title) claimed a proof of Conjecture~\ref{conj:super}
using similar techniques, but this argument had gaps that we have thus far
been unable to fill.  See Remark~\ref{remark:fail}.}

We aim in future work to prove similar results for covers of
finite-energy punctured $J$-holomorphic curves in
symplectic cobordisms, which should have interesting applications in
Symplectic Field Theory \cite{SFT} and 
Embedded Contact Homology \cite{Hutchings:lectures}.
A few special cases of super-rigidity in the punctured case have
previously been observed by the second author \cite{Wendl:automatic}, as
well as work of Fabert \cite{Fabert:local}, and unpublished work of Hutchings
\cite{Hutchings:magic}; those examples were restricted to
dimension four, but the methods introduced in the present paper
have no such restrictions.

\subsection{The main result}
\label{sec:main}

Assume $(M,\Jfix)$ is an almost complex manifold of dimension $2n \ge 4$,
$\uU \subset M$ is an open subset with compact closure, and
$$
\jJ(M\,;\,\uU,\Jfix)
$$
denotes the space of smooth almost complex structures on $M$ that
match $\Jfix$ outside of~$\uU$, with its natural $C^\infty$-topology.
If $M$ also carries a symplectic structure $\omega$ for which
$\Jfix$ is $\omega$-tame or $\omega$-compatible, we will denote the
corresponding spaces of tame/compatible almost complex structures
matching $\Jfix$ outside~$\uU$ by
$$
\jJ\tame(M,\omega\,;\,\uU,\Jfix) , \ 
\jJ\comp(M,\omega\,;\,\uU,\Jfix) \subset \jJ(M\,;\,\uU,\Jfix).
$$
\begin{remark}
\label{remark:symp}
The existence of a symplectic form on $M$ is not required for
any of the arguments in this paper, but since it is important
in applications, we will generally
assume at least that $(M,\omega)$ is symplectic and all
almost complex structures under consideration are
$\omega$-tame.  Note that
$\jJ\tame(M,\omega\,;\,\uU,\Jfix)$ is an open subset of
$\jJ(M\,;\,\uU,\Jfix)$, thus all statements made about
$\jJ\tame(M,\omega\,;\,\uU,\Jfix)$ will have obvious analogues
for $\jJ(M\,;\,\uU,\Jfix)$.
\end{remark}

With Remark~\ref{remark:symp} in mind, from now on we fix a
symplectic form $\omega$ on $M$ and assume $\Jfix$ is
$\omega$-tame.
Given $J \in \jJ\tame(M,\omega\,;\,\uU,\Jfix)$, a closed connected Riemann surface
$(\Sigma,j)$ and a $J$-holomorphic curve\footnote{When we use the word
``curve'' to describe $u : (\Sigma,j) \to (M,J)$, we mean that
$(\Sigma,j)$ is a smooth (non-nodal) Riemann surface and $u$ is a smooth map,
or in some cases an equivalence class of smooth maps up to parametrization
(this will be clear from context).  By
default this excludes nodal curves, and when we do mean ``nodal curve''
we will make this explicit.  This usage is common in
symplectic topology but may differ from conventions in the algebraic geometry
literature.}
$u : (\Sigma,j) \to (M,J)$, the \defin{index} of $u$ is the integer
\begin{equation}
\label{eqn:index}
\ind(u) = (n-3) \chi(\Sigma) + 2 c_1(u),
\end{equation}
where we abbreviate $c_1(u) := \langle c_1(TM,J), [u] \rangle$,
$[u] := u_*[\Sigma] \in H_2(M)$.  A closed and connected $J$-holomorphic curve
$\tilde{u} : (\widetilde{\Sigma},\tilde{\jmath}) \to (M,J)$ is said to be
a ($d$-fold) \defin{multiple cover} of $u$ if $\tilde{u} = u \circ \varphi$
for some holomorphic map $\varphi : (\widetilde{\Sigma},\tilde{\jmath}) \to
(\Sigma,j)$ of degree $d \ge 2$, and $u$ is called \defin{simple} if it is
nonconstant and is not a multiple cover of any other curve.  The map
$\varphi : \widetilde{\Sigma} \to \Sigma$ is generally a branched cover,
and we call it \defin{unbranched} (and $\tilde{u}$ an \emph{unbranched cover
of $u$}) if it is an honest covering map, meaning its set of branch points
is empty.

We say that the curve $u : \Sigma \to M$ is \defin{Fredholm regular} if a
neighborhood of $u$ in the moduli space of unparametrized $J$-holomorphic
curves is cut out transversely, 
see e.g.~\cite{Wendl:lecturesV2}*{\S 4.3}.  In this paper we will mainly
deal with immersed curves, for which a precise
definition of regularity is easier to state: suppose $u : \Sigma \to M$
is immersed and denote its complex normal bundle by $N_u \to \Sigma$.
The linearized Cauchy-Riemann operator associated to $u$ is the 
real-linear first-order differential operator
\begin{equation}
\label{eqn:Du}
\mathbf{D}_u : \Gamma(u^*TM) \to \Omega^{0,1}(\Sigma,u^*TM) :
\eta \mapsto \nabla \eta + J(u) \circ \nabla\eta \circ j +
(\nabla_\eta J) \circ Tu \circ j,
\end{equation}
where $\nabla$ is any choice of symmetric connection on~$M$.
We define
the \defin{normal Cauchy-Riemann operator} at $u$ as the restriction of
$\mathbf{D}_u$ to sections of $N_u$, composed with
the projection $\pi_N : u^*TM \to N_u$, hence
$$
\mathbf{D}_u^N = \pi_N \circ \mathbf{D}_u|_{\Gamma(N_u)} : \Gamma(N_u) \to 
\Omega^{0,1}(\Sigma,N_u).
$$
This is also a Cauchy-Riemann type operator, so its extension to any
reasonable Banach space completions such as
\begin{equation}
\label{eqn:normalBanach}
\mathbf{D}_u^N : W^{k,p}(N_u) \to W^{k-1,p}(\overline{\Hom}_\CC(T\Sigma,N_u))
\end{equation}
for $k \in \NN$ and $p > 1$ is a Fredholm operator, 
and elliptic regularity implies that its
kernel and cokernel do not depend on the choices $k$ and~$p$.
The curve $u$ is then Fredholm regular if and only if the linear map
\eqref{eqn:normalBanach} is surjective.  
In the present paper, we will sometimes deal with multiple covers
$\tilde{u} = u \circ \varphi$ for which $u$ is immersed but $\varphi$ may have
branch points, in which case $\mathbf{D}_{\tilde{u}}^N$ can naturally be 
defined as a Cauchy-Riemann type operator on 
$N_{\tilde{u}} := \varphi^*N_u$.  The curve $u$ is then called 
\defin{super-rigid} if it is immersed with index~$0$ and
$\mathbf{D}_{\tilde{u}}^N$ is injective for every cover $\tilde{u}$
of~$u$.  Note that if $\varphi : \widetilde{\Sigma} \to \Sigma$ has 
degree $d \in \NN$ and $Z(d\varphi) \ge 0$ denotes the
number of branch points of $\varphi$ counted with multiplicities,
then the Riemann-Hurwitz formula 
\begin{equation}
\label{eqn:RiemannHurwitz}
-\chi(\widetilde{\Sigma}) + d\chi(\Sigma)
= Z(d\varphi)
\end{equation}
implies
$$
\ind(\tilde{u}) = d \cdot \ind(u) - (n-3) Z(d\varphi),
$$
hence unbranched covers of immersed index~$0$ curves are also immersed
with index~$0$, and super-rigidity for unbranched covers is therefore
the same as Fredholm regularity.

Here is our main result.

\begin{thm}
\label{thm:super}
Assume $(M,\omega)$ is a symplectic manifold\footnote{As indicated in
Remark~\ref{remark:symp}, the first statement in the theorem could
also be stated without reference to any symplectic structure,
producing a Baire subset of $\jJ(M\,;\,\uU,\Jfix)$.}
with tame almost complex structure
$\Jfix$, and $\uU$ is an open subset with compact closure.  Then
there exists a Baire subset $\jJ_\reg \subset \jJ\tame(M,\omega\,;\,\uU,\Jfix)$
such that for every $J \in \jJ_\reg$, all unbranched covers of simple closed
$J$-holomorphic curves of index~$0$ contained fully in~$\uU$ are Fredholm
regular.

Moreover, if $\Jfix$ is $\omega$-compatible, then there is a Baire subset
$\jJ_\reg \subset \jJ\comp(M,\omega\,;\,\uU,\Jfix)$ such that for every
$J \in \jJ_\reg$, all unbranched covers of \emph{embedded} closed 
$J$-holomorphic curves of index~$0$ contained fully in~$\uU$ are
Fredholm regular.
\end{thm}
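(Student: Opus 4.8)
The plan is to prove the two assertions of Theorem~\ref{thm:super} by a single argument based on Taubes's analytic perturbation technique, reducing everything to a local transversality statement near a fixed unbranched cover. The starting observation is that, unlike branched covers, unbranched covers carry no moduli of their own: if $\tilde u = u\circ\varphi$ with $\varphi$ an honest covering map, then the space of such $\tilde u$ for fixed $u$ is parametrized by a discrete set (the conjugacy classes of index-$d$ subgroups of $\pi_1(\Sigma)$, or equivalently transitive $\pi_1(\Sigma)$-sets), and for a given $J$ there are at most countably many simple curves $u$ of index~$0$ lying in~$\uU$ up to reparametrization, since Gromov compactness bounds their energy and the index-$0$ moduli space is a countable union of zero-dimensional pieces after achieving transversality for the simple curves themselves. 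Thus it suffices, for each simple curve $u$ and each choice of finite covering $\varphi$, to show that the set of $J$ making $\mathbf{D}_{\tilde u}^N$ surjective is open and dense, and then take a countable intersection; the real content is the density.

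**The core local argument.** The hard part — and the main obstacle — is the density statement: given $J$ and an immersed simple index-$0$ curve $u$ whose image lies in~$\uU$, together with an unbranched cover $\tilde u = u\circ\varphi$ for which $\mathbf{D}_{\tilde u}^N$ has nontrivial cokernel, I must produce an arbitrarily $C^\infty$-small perturbation of $J$, supported in~$\uU$ and preserving tameness, that kills the cokernel. Following Taubes, the idea is \emph{not} to vary $J$ freely and invoke Sard–Smale, but to consider a one-parameter (or finite-parameter) analytic family $J_t$ with $J_0 = J$, study the corresponding family of normal operators $\mathbf{D}_{\tilde u_t}^N$ acting on $N_{\tilde u_t} = \varphi^* N_{u_t}$, and show that the eigenvalues of an associated self-adjoint operator cross zero with nonzero speed as $t$ varies — so that a generic choice of $t$ makes zero a regular value and the kernel/cokernel disappears. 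The perturbation of $J$ near a point of $u$ changes the zeroth-order term $(\nabla_\eta J)\circ Tu\circ j$ of $\mathbf{D}_u$, hence changes $\mathbf{D}_{\tilde u}^N$ by a bundle map on $N_{\tilde u}$ pulled back via $\varphi$; the key algebraic fact to establish is that, at a generic point of~$u$ and for a nonzero normal vector, one can choose the variation of $J$ to realize an arbitrary prescribed (symmetric, complex-antilinear) endomorphism of the normal bundle fibre, which is exactly the flexibility needed to perturb a cokernel element off the kernel. This unique-continuation-plus-pointwise-surjectivity package is what drives the whole proof; the subtlety in the cover is that one must ensure the perturbation interacts correctly with all sheets of $\varphi$ simultaneously, but since $\varphi$ is unbranched this is automatic — the pulled-back perturbation is just the original one read on each sheet.

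**Reducing the embedded/compatible case to the tame case.** For the second assertion, the domain of almost complex structures is the smaller (but still nonempty and open) space $\jJ\comp(M,\omega;\uU,\Jfix)$, so the perturbations allowed are more constrained; the remedy is to restrict attention to \emph{embedded} curves, where the normal bundle $N_u$ is an honestly embedded complex line (or higher-rank) bundle in $TM$ and one has more room to make $J$-preserving, $\omega$-compatible perturbations localized near the image. Concretely, near an embedded curve one can take a symplectic tubular neighbourhood, identify it with a neighbourhood of the zero section in the normal bundle with its linear symplectic and complex structure, and produce compatible perturbations of $J$ supported there that realize the required endomorphisms of $N_u$; the compatibility constraint cuts the available perturbations roughly in half compared to the tame case but still leaves a full-dimensional family of symmetric antilinear endomorphisms, which is all the pointwise surjectivity argument needs. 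Apart from this localization, the structure of the proof — countably many curves, finite covers with no moduli, eigenvalue-crossing to kill cokernels, countable intersection of open dense sets — is identical to the tame case, so the second statement follows by the same machinery with the domain replaced and the perturbations restricted to a symplectic Darboux-type neighbourhood of the embedded image.
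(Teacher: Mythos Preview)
Your overall architecture is right: Baire category via a countable intersection of open dense sets, Taubes's one-parameter analytic family rather than Sard--Smale, and the observation that unbranched covers have no moduli so one only needs finitely many covers per curve and per degree bound. But the description of the \emph{core local argument} contains a genuine gap, and in fact describes a mechanism different from the one that actually works.

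You write that one should ``show that the eigenvalues of an associated self-adjoint operator cross zero with nonzero speed'' and that ``at a generic point of~$u$\ldots one can choose the variation of $J$ to realize an arbitrary prescribed (symmetric, complex-antilinear) endomorphism of the normal bundle fibre, which is exactly the flexibility needed to perturb a cokernel element off the kernel.'' This is precisely the Sard--Smale picture you disavowed two sentences earlier: a localized perturbation near a point, chosen to pair nontrivially with a fixed cokernel element. The trouble is that such localized perturbations vanish on most of~$\Sigma$, and there is no reason a first-order eigenvalue computation should give nonzero speed for every zero eigenvalue simultaneously; nor does analytic perturbation theory by itself tell you anything unless you can exhibit \emph{some} parameter value where the operator is injective. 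What the paper actually does is choose the perturbation $B$ to be a \emph{global, nowhere-vanishing, complex-antilinear bundle isomorphism} $N_u \to \overline{\Hom}_\CC(T\Sigma,N_u)$ (which exists exactly because $\ind(u)=0$), arrange via an obstruction-theoretic argument that $B$ is symmetric with respect to the bundle metric, and then prove a Bochner--Weitzenb\"ock estimate
\[
\|(\mathbf{D}^N_{\tilde u} + \tau B_\varphi)\eta\|_{L^2}^2 \ge (c\tau^2 - c'\tau)\|B_\varphi\eta\|_{L^2}^2,
\]
where the antilinearity of $B$ is what allows the integration by parts that removes the dangerous first-order cross term, and the nowhere-vanishing of $B$ is what makes $\|B_\varphi\eta\|_{L^2}$ control~$\eta$. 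This gives injectivity for all \emph{large} $\tau$, and only then does analytic perturbation theory (the zero set of $\tau\mapsto\ker\mathbf{D}_\tau$ is discrete or everything) finish the job. Your proposal never mentions the nowhere-vanishing requirement, the large-$\tau$ regime, or the integration-by-parts mechanism; without these the argument does not close. This is also why the theorem requires the curve to lie \emph{fully} in~$\uU$ rather than merely intersecting it (cf.\ Remark~\ref{remark:local}): a perturbation supported near a single point would vanish on most of the curve and the Weitzenb\"ock estimate would fail.

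A secondary issue: your openness/density bookkeeping (``for each simple curve $u$\ldots the set of $J$ making $\mathbf{D}^N_{\tilde u}$ surjective is open and dense, then take a countable intersection'') is not literally correct, since $u$ itself depends on~$J$. The paper handles this by exhausting the moduli space by compact pieces $\mM_g(A,J,N)$ (bounding derivatives, distance from the Deligne--Mumford boundary, and distance from non-immersed or escaping behaviour) and defining $\jJ_\reg(N)$ to require regularity for \emph{all} curves in that compact piece and all their covers of degree $\le N$; openness then follows from a convergence argument, and density from the fact that the compact piece contains only finitely many index-$0$ curves once simple curves are regular.
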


\begin{remark}
\label{remark:weDoNotKnow}
We do not know whether the restriction to embedded curves in the
$\omega$-compatible case can be relaxed; the reason is
explained in Remark~\ref{remark:whyNotCompatible}.
This is in any case only
a restriction in dimension four, since embeddedness is a generic property
of holomorphic curves in higher dimensions
(see e.g.~\cite{Wendl:lecturesV2}*{\S 4.6} or \cite{OhZhu:embedding}).
In the $\omega$-tame case, our argument works for all immersed curves
with distinct transverse self-intersections, which is a generic
property even in dimension four.
\end{remark}

The next two remarks draw attention to generalizations of
Theorem~\ref{thm:super} that might naturally be expected to hold but
do \emph{not} follow from our arguments,
and in some cases are actually false.

\begin{remark}
\label{remark:deformation}
The standard transversality results as in \cites{McDuffSalamon:Jhol,Wendl:lecturesV2}
for simple $J$-ho\-lo\-morph\-ic curves 
have straightforward extensions to generic $1$-paramater families
$\{ J_\tau \}$ of almost complex structures, showing in essence that the
space of pairs 
$$
\{ (\tau,u) \ |\ \text{$u$ is {simple and} $J_\tau$-holomorphic} \}
$$
is a manifold of dimension $\ind(u) + 1$.  This means that all simple
$J_\tau$-holomorphic curves are regular for almost every~$\tau$, but
there may be birth-death bifurcations at a discrete set of parameter
values.  The work of Taubes \cite{Taubes:counting} shows that
when multiple covers are {allowed}, more general types of
bifurcations must be {considered}, so e.g.~the extension of the usual
results for simple curves to unbranched covers of index~$0$ curves
is not at all straightforward.  We will not prove anything in this paper
about generic $1$-parameter families of data.
\end{remark}
\begin{remark}
\label{remark:local}
The standard results for simple curves do not require the curves to be \emph{fully}
contained in the perturbation domain~$\uU$ in order to achieve transversality; 
it suffices rather that they should intersect
$\uU$ \emph{somewhere}, the key point being that there is an injective point
mapped into~$\uU$.  Our methods on the other hand work only for curves that
are fully contained in $\uU$, and we do not know whether this assumption
can be weakened.  The reason for this is discussed in 
Remark~\ref{remark:peculiarities}.
In this sense, Theorem~\ref{thm:super} seems to represent
a fundamentally different phenomenon 
from the usual transversality results for simple curves.
\end{remark}

\subsection{Application to Gromov-Witten theory}
\label{sec:GW}

In the results of this section,
the words ``for generic $J$\ldots'' should be understood to mean
that there exists a Baire subset of the appropriate space of almost
complex structures for which the statement is true.  

Let $\mM_{g,m}(A,J)$ denote the moduli space of smooth unparametrized
$J$-holomorphic curves in $M$ with genus~$g$ and $m$ marked points in the
homology class $A \in H_2(M)$; the precise definition will be recalled
in the discussion below.  We denote the natural evaluation map by
$$
\ev : \mM_{g,m}(A,J) \to M^m,
$$
and let 
$$
\mM_{g,m}^*(A,J) \subset \mM_{g,m}(A,J)
$$
denote the open subset consisting of simple curves.  For any
integer $m \ge 0$, the $m$-point Gromov-Witten invariant
$$
\GW^{(M,\omega)}_{g,m,A} : H^*(M)^{\otimes m} \to \QQ
$$
is defined morally by counting intersections of the evaluation
map with cycles in $M^m$ determined by an $m$-tuple of cohomology
classes.  The standard definition of these invariants in 
\cite{RuanTian:higherGenus} for semipositive
symplectic manifolds (which includes all symplectic $4$-manifolds)
requires generic inhomogeneous perturbations to the nonlinear
Cauchy-Riemann equation, thus breaking the symmetry inherent in
multiply covered curves.  We will now show that when $\dim_\RR M = 4$,
these invariants can also be computed by simpler means that do not
break the symmetry.
Recall from \cite{McDuffSalamon:Jhol}*{\S 6.5}
that for any subset $\mM^* \subset \mM_{g,m}(A,J)$,
the restriction $\ev : \mM^* \to M^m$ is said to be a
\defin{pseudocycle of dimension $d \ge 0$} if $\mM^*$ is a smooth $d$-dimensional
manifold and $\overline{\mM}_{g,m}(A,J) \setminus \mM^*$ can be covered by 
subsets on which
$\ev$ factors through a smooth map to $M^m$ from a manifold of dimension
at most $d-2$.  In this case one can define integer-valued 
intersection products of $\ev$ with homology classes in~$M^m$.
The following proposition for the case $m \ge 1$ is presumably not
a new result, but we are not aware of any proof of it in the current
literature; ours
will require only the standard transversality results for simple curves.

\begin{prop}
\label{prop:pseudo}
Assume $(M,\omega)$ is a closed symplectic $4$-manifold.  Then for
generic $\omega$-compatible or tame almost complex structures $J$ and for
every $A \in H_2(M)$ and every pair of nonnegative integers $(g,m)$
satisfying $-(2-2g) + 2c_1(A) > 0$ and $m \ge 1$,
the evaluation map $\ev : \mM_{g,m}^*(A,J) \to M^m$ on the set of
simple curves 
is a pseudocycle of dimension $-(2-2g) + 2 c_1(A) + 2m$.  The
corresponding $m$-point Gromov-Witten invariant can thus be computed as
an intersection number
$$
\GW^{(M,\omega)}_{g,m,A}(\alpha_1,\ldots,\alpha_m) = \left[\ev|_{\mM_{g,m}^*(A,J)}\right]
\cdot \left( \PD(\alpha_1) \times \ldots \times \PD(\alpha_m) \right),
$$
and in particular, its values are always integers.
\end{prop}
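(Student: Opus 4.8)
The plan is to prove directly, using only the standard transversality theory for \emph{simple} $J$-holomorphic curves, that the evaluation map on $\mM_{g,m}^*(A,J)$ is a pseudocycle of the claimed dimension; everything else then follows from the general theory of pseudocycles in \cite{McDuffSalamon:Jhol}*{\S\,6.5--6.6}, which produces integer intersection numbers invariant under bordism, together with a bordism that turns off the inhomogeneous perturbation appearing in the definition of the Gromov--Witten invariants in \cite{RuanTian:higherGenus}. To begin, I would choose $J$ from a Baire subset so that all simple $J$-holomorphic curves, in every homology class and of every genus, are Fredholm regular — a countable intersection of the usual genericity conditions (also including transversality of the evaluation map at tuples of points). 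Then no simple $J$-holomorphic curve has negative index, $\mM_{g,m}^*(A,J)$ is a smooth oriented manifold of the expected dimension $N := -(2-2g)+2c_1(A)+2m$ on which $\ev$ is smooth, and by Gromov compactness $\overline{\mM}_{g,m}(A,J)\setminus\mM_{g,m}^*(A,J)$ is a finite union of degeneration strata $\sS$, each built from stable maps whose nonconstant components are, after forgetting parametrizations, $k_i$-fold branched covers ($k_i\ge 1$, $\sum_i k_i B_i = A$ with $B_i := [u_i]/k_i$) of simple curves $v_i$ lying in the regular moduli spaces, with possibly some ghost components attached.

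The core is a codimension estimate for each $\sS$. Since the image of a stable map, and the images of its marked points, are determined by the underlying simple curves $v_i$ together with the positions in their domains of the marked points and of the nodes at which ghost components attach — all marked points in one ghost cluster mapping to a single such node image — the restriction $\ev|_\sS$ factors through a smooth map to $M^m$ from a manifold $\mathcal W_\sS$ of dimension at most $\sum_i\ind(v_i)+2m$; the sublocus on which marked points collide under the branched covers is absorbed by a further, lower-dimensional refinement of the covering, using the evaluation-at-tuples transversality. Combining additivity of $c_1$, the Riemann--Hurwitz formula \eqref{eqn:RiemannHurwitz} for each $\varphi_i$, and the arithmetic-genus formula for the nodal domain yields the identity
$$
\big(-(2-2g)+2c_1(A)\big)-\sum_i\ind(v_i) = \sum_i(k_i-1)\ind(v_i)+\sum_i Z(d\varphi_i)+\Big(\sum_{i\ \text{ghost}}(2h_i-2)+2\delta\Big),
$$
where $\delta$ is the number of nodes and $h_i$ the genus of the $i$-th component. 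In dimension four every $\ind(v_i)$ is nonnegative and even, and a short graph-combinatorial argument — a connected dual graph on $r$ vertices has at least $r-1$ edges, and stability forbids ghost components with too few special points — shows the parenthesized ghost-and-node term to be nonnegative; hence the right-hand side is a nonnegative \emph{even} integer, and $\dim\mathcal W_\sS\le N$, with strict inequality by at least $2$ unless that right-hand side vanishes.

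When it vanishes, the degeneration is highly constrained: all covers are unbranched, every ghost component is a sphere, and together with the single nonconstant component $C_1$ these form a tree. Either the simple curve underlying $C_1$ has index zero, and then $-(2-2g)+2c_1(A)=0$, contradicting the hypothesis; or $C_1$ is itself that simple curve with at least one ghost sphere attached, in which case stability forces the ghost spheres to carry at least as many marked points as the number of ghost spheres plus the number of ghost clusters, so a refined count of $\mathcal W_\sS$ again gives $\dim\mathcal W_\sS\le N-2$. Thus $\dim\mathcal W_\sS\le N-2$ for every stratum, and $\ev|_{\mM_{g,m}^*(A,J)}$ is a pseudocycle of dimension $N=-(2-2g)+2c_1(A)+2m$. (It is exactly the first, dangerous possibility — an unbranched multiple cover of a simple index-zero curve — that the hypothesis $-(2-2g)+2c_1(A)>0$ rules out, and that Theorem~\ref{thm:super} is designed to handle in the remaining case.)

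Finally, by the general pseudocycle theory the intersection number $\big[\ev|_{\mM_{g,m}^*(A,J)}\big]\cdot\big(\PD(\alpha_1)\times\dots\times\PD(\alpha_m)\big)$ is a well-defined integer, and to see that it equals $\GW^{(M,\omega)}_{g,m,A}(\alpha_1,\dots,\alpha_m)$ — defined in \cite{RuanTian:higherGenus} via a generic inhomogeneous perturbation $\nu$ of the Cauchy--Riemann equation — I would run the identical count over a generic path $\{(J,t\nu)\}_{t\in[0,1]}$: the single shared extra parameter raises the dimension of every stratum, including the top one, by exactly one, so the codimension estimate survives and the parametrized moduli space of simple curves is a pseudocycle bordism between the two. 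The step I expect to be the main obstacle is precisely this bookkeeping of the degeneration strata: showing that the ghost and node contributions always combine nonnegatively, that the borderline configuration with ghost spheres is saved by the marked points those spheres are forced to carry, and that the only surviving codimension-at-most-one possibility is the unbranched cover of a simple index-zero curve, which is exactly what the hypothesis $-(2-2g)+2c_1(A)>0$ excludes.
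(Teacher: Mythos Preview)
Your proposal is correct and follows essentially the same approach as the paper: both rest on the index relations \eqref{eqn:indexAdd} and \eqref{eqn:cover} specialized to $n=2$, together with the nonnegativity of indices of simple curves for generic~$J$, to show that for any limit in $\overline{\mM}_{g,m}(A,J)\setminus\mM_{g,m}^*(A,J)$ the underlying simple curves have total index at most $\ind(u_k)-2$, whence the evaluation map factors through a manifold of dimension at most $N-2$. Your displayed identity is just a rearrangement of the paper's \eqref{eqn:indexNodal}--\eqref{eqn:indexNodal2} combined with \eqref{eqn:cover}.

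The paper's version is considerably terser. It applies the index relations as stated for $\mM_{g,0}$, implicitly forgetting the marked points; your borderline stratum of a single simple component decorated with ghost spheres then collapses under the forgetful map to a smooth simple curve and is left to the standard marked-point-collision argument, whereas you keep the marked points throughout and handle this case explicitly via the stability-forced surplus $m_g\ge r_g+c$ of marked points on the ghost spheres. Likewise, the paper simply asserts the identification with the Ruan--Tian invariant, while you spell out the pseudocycle bordism over a generic path $\{(J,t\nu)\}$. So the two arguments differ only in how much of the routine bookkeeping is made explicit.
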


The picture for the $0$-point invariants with $g \ge 1$ is somewhat different, 
as it turns out that multiply covered curves cannot be avoided in this case,
but only \emph{unbranched} covers need be considered.  The arguments
behind Proposition~\ref{prop:pseudo} thus combine with Theorem~\ref{thm:super}
to give the following more novel result.

\begin{thm}
\label{thm:finitely}
For generic $\omega$-tame almost complex structures $J$ on a closed
symplectic $4$-manifold $(M,\omega)$, the set of index~$0$ curves
satisfying any given bound on their genus and area is finite, 
and all of them are Fredholm regular.
\end{thm}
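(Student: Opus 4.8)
The plan is to combine Proposition~\ref{prop:pseudo}, Theorem~\ref{thm:super}, and Gromov compactness. First I would set up the standard structural results for closed $J$-holomorphic curves in dimension four. Fix a genus bound $g_0$ and an area bound $E_0$, and let $\mM$ denote the set of all somewhere-injective (simple) closed $J$-holomorphic curves with $\ind(u) = 0$, genus $\le g_0$ and symplectic area $\le E_0$. Since the area of a $J$-holomorphic curve controls its homology class up to finitely many possibilities (the intersection of the image of $H_2(M;\ZZ)$ in $H_2(M;\RR)$ with the half-space $\{\langle [\omega],\cdot\rangle \le E_0\}$ and the bounded-genus constraint together leave only finitely many classes $A$ with a nonempty moduli space), it suffices to prove finiteness of the simple index-0 curves in each fixed class $A$ of each fixed genus $g$, and then to handle their multiple covers. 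By the index formula \eqref{eqn:index}, $\ind(u)=0$ in dimension four means $(n-3)\chi(\Sigma)+2c_1(u) = -\chi(\Sigma) + 2c_1(A) = 0$, so $\mM_{g}^*(A,J)$ for the relevant $(g,A)$ is, by the standard transversality theorem for simple curves and generic $J$ (as in \cite{McDuffSalamon:Jhol,Wendl:lecturesV2}), a smooth manifold of dimension $\ind(u) = 0$, i.e.\ a discrete set.

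The second step is to show this discrete set is in fact \emph{finite}. Here I would invoke Gromov compactness: any sequence in $\mM_g^*(A,J)$ has a subsequence converging to a nodal $J$-holomorphic curve. The limit's components are (possibly multiply covered) $J$-holomorphic curves whose underlying simple curves have total homological area $\le E_0$; if the limit has more than one component or is a nontrivial multiple cover, then the principle that generic $J$ makes all simple curves regular with the expected nonnegative dimension, together with the fact that simple curves in a class with $\langle[\omega],\cdot\rangle$ bounded away from $0$ and below $E_0$ form finitely many positive-codimension strata appearing in any degeneration, shows that the limit cannot be a limit of geometrically distinct smooth index-0 curves unless the sequence is eventually constant. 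The cleanest way to phrase this: in dimension four, for generic $J$, a sequence of distinct simple index-0 curves cannot Gromov-converge, because the limiting configuration would be covered by a space of dimension $\le -2$ by the pseudocycle analysis in Proposition~\ref{prop:pseudo} (the stratum of bubbled/nodal limits has codimension at least $2$), forcing the sequence to be finite. Hence $\mM_g^*(A,J)$ is finite for each relevant $(g,A)$, and the finitely-many-classes observation gives finiteness of all simple index-0 curves with the given genus and area bounds.

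The third step deals with multiply covered index-0 curves. By the computation following \eqref{eqn:RiemannHurwitz}, in dimension four ($n=3$) one has $\ind(\tilde u) = d\cdot\ind(u) - 0\cdot Z(d\varphi) = d\cdot\ind(u)$, so a branched or unbranched cover $\tilde u = u\circ\varphi$ of a simple curve $u$ has index $0$ if and only if $u$ itself has index $0$ (the branch-point term drops out precisely because $n-3=0$). Thus every index-0 curve with genus $\le g_0$ and area $\le E_0$ is a cover of one of the finitely many simple index-0 curves found above, of bounded degree $d \le E_0/\langle[\omega],A\rangle$ (area being multiplicative under covers). For each such simple $u$ of genus $g_u$ and each degree $d$, the covers of $u$ of genus $\le g_0$ correspond, via Riemann–Hurwitz, to branched covers $\varphi:\widetilde\Sigma\to\Sigma$ of bounded degree and bounded total branching $Z(d\varphi) = -\chi(\widetilde\Sigma) + d\chi(\Sigma) \le (2g_0-2) + d(2-2g_u)$, with branch points at arbitrary locations in $\Sigma$; the set of such $\varphi$ up to biholomorphism of $\widetilde\Sigma$ is compact but generally not finite (it has positive-dimensional moduli when branching is present). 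To complete the proof I would argue that for generic $J$ these covers contribute nothing extra to the count beyond the unbranched ones: by Theorem~\ref{thm:super} the unbranched covers are Fredholm regular, hence isolated and finite in number (the unbranched covering maps $\varphi$ of a fixed base of bounded degree are themselves finite in number, being determined by monodromy representations of $\pi_1(\Sigma)$), and for the branched covers one uses that $\mM_{g,0}(A,J)\setminus(\text{unbranched covers of simple index-0 curves})$ lies in the image of a map from a manifold of dimension $\le -2$ — exactly the bubbling/branching codimension bound underlying Proposition~\ref{prop:pseudo} — so for generic $J$ no branched-cover configuration with nontrivial branching actually occurs as an honest smooth $J$-holomorphic curve of index $0$ and bounded area.

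The main obstacle I anticipate is the third step: ruling out genuinely branched index-0 covers for generic $J$. Unlike the simple-curve case, one cannot directly apply Sard–Smale to the space of branched covers because the perturbation $J$ sees only the image of $u$, not the cover, and the branch points live in a positive-dimensional moduli space on which $\mathbf{D}_{\tilde u}^N$ can fail to be surjective in a way that is \emph{not} cured by perturbing $J$ (this is precisely the difficulty flagged in the footnote about Conjecture~\ref{conj:super}). The resolution in dimension four must exploit the special index arithmetic — that branching costs nothing in the index when $n=3$ — together with a dimension count showing that the \emph{moduli} of branched covers is too small to intersect the generic locus, essentially because a branched cover of genus $\le g_0$ of a simple index-0 curve, if it existed as a limit, would force the base simple curve or a nearby deformation to violate the generic transversality already established. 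Making this dimension count rigorous, and in particular showing it survives the passage to the branched (positive-moduli) setting, is the delicate point; I expect it to hinge on combining the Riemann–Hurwitz identity \eqref{eqn:RiemannHurwitz} with the pseudocycle codimension estimate from Proposition~\ref{prop:pseudo} applied to the full compactified moduli space $\overline{\mM}_{g,0}(A,J)$.
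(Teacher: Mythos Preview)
Your proposal contains a decisive computational error that both creates a phantom difficulty and obscures the actual argument. You write ``in dimension four ($n=3$)'', but the convention throughout the paper is $\dim_\RR M = 2n$, so in dimension four one has $n=2$, not $n=3$. With the correct value, the formula after \eqref{eqn:RiemannHurwitz} reads
\[
\ind(\tilde{u}) = d\cdot\ind(u) - (n-3)\,Z(d\varphi) = d\cdot\ind(u) + Z(d\varphi),
\]
so a cover of a simple index-$0$ curve has index equal to the number of branch points. In particular, an index-$0$ cover of an index-$0$ simple curve is automatically \emph{unbranched}. Your entire third step, and the ``main obstacle'' you identify, therefore evaporates: there are no branched index-$0$ covers to rule out, and no need for the delicate dimension count you sketch.

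Once this is corrected, the paper's argument is considerably simpler and more uniform than your three-step decomposition. Rather than first proving finiteness of simple curves and then handling covers separately, one applies the index relations \eqref{eqn:indexAdd} and \eqref{eqn:cover} directly to an arbitrary degenerating sequence of index-$0$ curves $u_k \to u_\infty$ in $\overline{\mM}_{g,0}(A,J)$. Since each underlying simple curve $v^i$ has $\ind(v^i) \ge 0$ by genericity, equation \eqref{eqn:cover} with $n=2$ gives $\ind(u_\infty^i) \ge 0$, and then \eqref{eqn:indexNodal} with $\ind(u_k)=0$ forces the limit to have a single nonconstant component with no nodes, covering a simple index-$0$ curve with $Z(d\varphi)=0$. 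Thus every element of $\overline{\mM}_{g,0}(A,J)$ is a smooth unbranched cover of a simple index-$0$ curve; Theorem~\ref{thm:super} makes all of these Fredholm regular, hence isolated, and compactness of $\overline{\mM}_{g,0}(A,J)$ then gives finiteness directly. Your Step~2 argument for finiteness of simple curves alone is also unnecessarily indirect (and somewhat imprecise about what Proposition~\ref{prop:pseudo} provides, since that proposition assumes $\ind > 0$); the compactification argument handles simple and multiply covered curves simultaneously.
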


We should again caution the reader that we do not know whether the
generic $J$ in Theorem~\ref{thm:finitely} can be chosen to be
\emph{compatible} with~$\omega$ (see Remark~\ref{remark:weDoNotKnow}), 
though one can require this if one
is only interested in covers of embedded curves 
(as in \cites{Taubes:counting,Taubes:SWtoGr}).  Choosing $J$ tame is in any case
good enough to compute Gromov-Witten invariants.
In order to state the main corollary, we can associate to any integral
homology class $A \in H_2(M)$ in a symplectic manifold $(M,\omega)$ its
\defin{symplectic divisibility}
$$
d_\omega(A) \in \NN,
$$
defined as the product of the finite set of integers $k \in \NN$ such that 
$A = kB$ for some primitive class $B \in H_2(M)$ with $\omega(B) > 0$.

\begin{cor}
\label{cor:integral}
Suppose $(M,\omega)$ is a closed symplectic $4$-manifold and
$A \in H_2(M)$ and $g \in \NN$ satisfy $-(2-2g) + 2c_1(A) = 0$.  Then the
$0$-point Gromov-Witten invariant can be computed
for generic tame almost complex structures $J$ as a signed and weighted
count of finitely many $J$-holomorphic curves
$$
\GW^{(M,\omega)}_{g,0,A} = \sum_{u \in \mM_{g,0}(A,J)} \frac{\sigma(u)}{|\Aut(u)|},
$$
where for each curve $u$, $\sigma(u) \in \{-1,1\}$ is determined by an orientation
of the determinant line bundle, and $\Aut(u)$ denotes the
automorphism group of~$u$.  In particular, the number
$\GW^{(M,\omega)}_{0,0,A}$ is always an integer, while for $g \ge 1$,
$d_\omega(A) \cdot \GW^{(M,\omega)}_{g,0,A}$ is an integer.
\end{cor}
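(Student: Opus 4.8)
The plan is as follows. Since $-(2-2g)+2c_1(A)=0$, the expected dimension $\virdim\overline{\mM}_{g,0}(A,J)=(n-3)(2-2g)+2c_1(A)$ of the compactified moduli space is zero, and moreover any smooth curve in $\overline{\mM}_{g,0}(A,J)$ — having a genus-$g$ domain and representing $A$ — automatically has index~$0$. Closed symplectic $4$-manifolds are semipositive, so $\GW^{(M,\omega)}_{g,0,A}$ is defined in the sense of \cite{RuanTian:higherGenus} and is by construction a deformation-invariant count; what must be shown is that for generic tame $J$ this count is realized by honest $J$-holomorphic curves (i.e.\ with no inhomogeneous perturbation) and equals $\sum_u\sigma(u)/|\Aut(u)|$. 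The strategy is to prove that for generic tame $J$ the space $\overline{\mM}_{g,0}(A,J)$ is a \emph{finite} set of Fredholm-regular index-$0$ curves, and then to feed this into the standard orientation/weighting machinery.

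The geometric heart is to exclude everything except Fredholm-regular index-$0$ curves. Choose $J$ generic in the sense of Theorem~\ref{thm:finitely}, and additionally — a countable intersection of Baire conditions over homology classes $\beta$ with $0<\omega(\beta)\le\omega(A)$ and genera $\le g$ — so that every simple $J$-holomorphic curve of genus $\le g$ and area $\le\omega(A)$ is Fredholm regular; in particular no such curve has negative index. In real dimension four the identity $\ind(\tilde u)=d\cdot\ind(u)-(n-3)Z(d\varphi)=d\cdot\ind(u)+Z(d\varphi)$ then forces \emph{every} closed $J$-holomorphic curve of genus $\le g$ and area $\le\omega(A)$, simple or multiply covered, to have nonnegative index, with index exactly zero only for unbranched covers of simple index-$0$ curves, which are Fredholm regular by Theorem~\ref{thm:finitely}. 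To rule out the nodal strata I would use Euler-characteristic bookkeeping: a connected stable curve with $\nu\ge1$ nodes and components $u_i$ of genus $h_i$ and class $\beta_i$ satisfies $\sum_i\chi(\Sigma_i)=2(\nu-g+1)$, hence $\sum_i\ind(u_i)=\virdim\overline{\mM}_{g,0}(A,J)-2\nu=-2\nu$. Every non-constant component has index $\ge0$; a constant component of genus $\ge1$ has index $\ge0$; and each constant sphere component contributes exactly $-2$ to this sum but, by stability with $m=0$, carries at least three of the $2\nu$ nodal special points. Writing $N_0$ for the number of constant sphere components, the index sum gives $\nu\le N_0$ and stability gives $3N_0\le2\nu$, whence $\nu\le N_0\le\tfrac{2}{3}\nu$, forcing $\nu=0$ — a contradiction. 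Thus $\overline{\mM}_{g,0}(A,J)=\mM_{g,0}(A,J)$ is compact, and by Theorem~\ref{thm:finitely} it is a finite set of Fredholm-regular index-$0$ curves.

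With this in hand, the coherent orientations of the determinant lines attach a sign $\sigma(u)\in\{\pm1\}$ to each curve $u$ (which, being of index~$0$, is isolated in the moduli space), the orbifold point at a multiply covered $u$ carries weight $1/|\Aut(u)|$, and deformation invariance of $\GW^{(M,\omega)}_{g,0,A}$ under the semipositive perturbation scheme identifies it with $\sum_{u\in\mM_{g,0}(A,J)}\sigma(u)/|\Aut(u)|$. For $g=0$ the constraint reads $c_1(A)=1$, and a genus-$0$ curve can be neither a nontrivial unbranched cover ($S^2$ admits none) nor a cover of a positive-genus curve, so every contributing curve is simple and $|\Aut(u)|=1$, giving $\GW^{(M,\omega)}_{0,0,A}=\sum_u\sigma(u)\in\ZZ$. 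For $g\ge1$, any contributing multiple cover is $u=v\circ\varphi$ with $v$ simple and $\varphi$ an honest covering, so $\Aut(u)$ is the deck group of $\varphi$ and $|\Aut(u)|$ divides $\deg\varphi$; since $A=(\deg\varphi)[v]$ with $\omega([v])=\int v^*\omega>0$, writing $[v]=\ell B$ with $B$ primitive and $\omega(B)>0$ exhibits $(\deg\varphi)\ell$ as one of the integers whose product defines $d_\omega(A)$, so $\deg\varphi$ — hence $|\Aut(u)|$ — divides $d_\omega(A)$. Therefore $d_\omega(A)\,\sigma(u)/|\Aut(u)|\in\ZZ$ for each $u$, and summing yields $d_\omega(A)\cdot\GW^{(M,\omega)}_{g,0,A}\in\ZZ$.

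The main obstacle is the exclusion of nodal strata carrying multiply covered components: ordinary transversality for simple curves does not by itself show these strata are empty, and it is exactly here that the Fredholm regularity of the unbranched index-$0$ covers supplied by Theorem~\ref{thm:finitely} is indispensable; one must also handle the constant ``ghost'' components with care, which is why the Euler-characteristic count above is coupled to their stability constraints. A secondary point is the identification of the perturbation-theoretic definition of $\GW^{(M,\omega)}_{g,0,A}$ with the weighted count over $\mM_{g,0}(A,J)$: this is a standard consequence of deformation invariance together with the local structure of the moduli space near a Fredholm-regular cover (where $\mathbf{D}^N_{\tilde u}$ being injective and of index~$0$ makes it bijective, so $\tilde u$ is isolated with isotropy $\Aut(\tilde u)$), but it does use the semipositivity of $(M,\omega)$, valid for all symplectic $4$-manifolds, and is why the argument is phrased for generic \emph{tame} rather than compatible $J$ (cf.\ Remark~\ref{remark:weDoNotKnow}).
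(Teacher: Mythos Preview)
Your proof is correct and follows essentially the same route as the paper's: show via index relations that for generic tame $J$ no nodal degenerations occur and every curve in $\mM_{g,0}(A,J)$ is an unbranched cover of a simple index-$0$ curve (hence Fredholm regular by Theorem~\ref{thm:super}), then read off the weighted count and the divisibility of $|\Aut(u)|$. Your nodal exclusion via the ghost-sphere count $\nu\le N_0\le \tfrac{2}{3}\nu$ is a slightly different bookkeeping of the same inequality the paper uses, namely $0=\ind(u_k)=\sum_i[\ind(u_\infty^i)+N_i]$ with each summand $\ge 1$ (nonconstant components have $\ind\ge 0$ and $N_i\ge 1$; constant components have $-\chi(S_i)+N_i\ge 1$ by stability), which kills nodal limits in one line; your integrality argument is a touch more explicit than the paper's in tracking why $\deg\varphi$ divides $d_\omega(A)$.
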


In order to prepare for the proofs of these results,
let us recall the definitions of the relevant
moduli spaces.  Given
integers $g,m \ge 0$ and a homology class $A \in H_2(M)$, 
the moduli space of \defin{unparametrized $J$-holomorphic curves}
$\mM_{g,m}(A,J)$ can be defined as the set
of equivalence classes of tuples
$(\Sigma,j,\Theta,u)$ where $(\Sigma,j)$ is a closed connected Riemann
surface of genus~$g$, $\Theta \subset \Sigma$ is an ordered set of
$m$ distinct points (the \defin{marked points}), 
and $u : (\Sigma,j) \to (M,J)$ is a $J$-holomorphic
map satisfying $[u] = A$, with equivalence defined by $(\Sigma,j,\Theta,u) \sim
(\Sigma',\psi^*j,\psi^{-1}(\Theta),u \circ \psi)$ for diffeomorphisms
$\psi : \Sigma' \to \Sigma$.  The \defin{automorphism group} 
$\Aut(u)$ of $[(\Sigma,j,\Theta,u)] \in \mM_{g,m}(A,J)$ is the group of
biholomorphic diffeomorphisms $\psi : (\Sigma,j) \to (\Sigma,j)$ that fix
each of the marked points and satisfy $u = u \circ \psi$; it is always finite,
and is trivial whenever $u$ is simple.
The \defin{Gromov compactification} of
$\mM_{g,m}(A,J)$ is the space $\overline{\mM}_{g,m}(A,J)$ of 
(equivalence classes of) \defin{stable nodal
curves} $(S,j,\Theta,\Delta,u)$, where now $S$ may be disconnected, and
the original data are augmented by an unordered
set of distinct points in $S \setminus \Theta$, arranged into unordered pairs
$$
\Delta = \left\{ \{\hat{z}_1,\check{z}_1\},\ldots,\{\hat{z}_r,\check{z}_r\} \right\},
$$
such that $u(\hat{z}_i) = u(\check{z}_i)$ for each $i=1,\ldots,r$.  
We call the pairs $\{\hat{z}_i,\check{z}_i\}$ \defin{nodes}, and each individual
$\hat{z}_i$ or $\check{z}_i \in S$ a \defin{nodal point}.
The curves in
$\overline{\mM}_{g,m}(A,J)$ are required to have \defin{arithmetic genus}~$g$,
which means that the surface obtained from $S$ by performing connected sums
at all matched pairs of nodal points is a closed connected
surface of genus~$g$.  The stability condition requires that any component of
$S \setminus (\Theta \cup \Delta)$ on which $u$ is constant should have negative
Euler characteristic.  With this condition, $\overline{\mM}_{g,m}(A,J)$ can be
given a natural topology as a metrizable Hausdorff space, and it is compact
whenever $J$ is tamed by a symplectic form.  A definition of the topology may be
found e.g.~in \cite{SFTcompactness}; for
sequences in $\mM_{g,m}(A,J)$, it amounts to the notion of $C^\infty$-convergence
for $j$ and $u$ after a choice of parametrization for which all domains and
marked point sets are identified.  Curves $[(S,j,\Theta,\Delta,u)] \in
\overline{\mM}_{g,m}(A,J)$ with $\Delta = \emptyset$ can equivalently be
regarded as elements of $\mM_{g,m}(A,J)$, and are thus called \defin{smooth}
curves to distinguish them from nodal curves.
The evaluation map is defined by
$$
\ev : \mM_{g,m}(A,J) \to M \times \ldots \times M :
[(\Sigma,j,(\zeta_1,\ldots,\zeta_m),u)] \mapsto (u(\zeta_1),\ldots,u(\zeta_m)),
$$
and it extends to a continuous map on $\overline{\mM}_{g,m}(A,J)$.

When there is no danger of confusion, we shall sometimes 
abuse notation by denoting equivalence classes $[(\Sigma,j,\Theta,u)] \in \mM_{g,m}(A,J)$
or $[({S},j,\Theta,\Delta,u)] \in \overline{\mM}_{g,m}(A,J)$ simply by 
$u \in \mM_{g,m}(A,J)$ or $u \in \overline{\mM}_{g,m}(A,J)$ respectively, 
and we will refer to the
restriction of a nodal curve {$[(S,j,\Theta,\Delta,u)]$ to any connected component of 
its domain $S$} as a
\defin{smooth component} of~$u$.  Recall that $\mM_{g,0}(A,J)$ has
\defin{virtual dimension} equal to the index of any curve $u \in \mM_{g,0}(A,J)$.

It will be useful to recall certain index relations for
degenerating sequences of holomorphic curves.
Suppose $\dim_\RR M = 2n$, and $[(\Sigma,j_k,u_k)] 
\in \mM_{g,0}(A,J)$ is a sequence 
converging to a stable nodal curve $[(S,j_\infty,\Delta,u_\infty)] 
\in \overline{\mM}_{g,0}(A,J)$ with {smooth} components
$$
\left\{ [(S_i,j_\infty^i,u_\infty^i)] \in \mM_{g_i}(A_i,J) \right\}_{i=1,\ldots,r}.
$$
Then if $N_i := | S_i \cap \Delta | \ge 1$ denotes the number of nodal points on $S_i$ for 
$i=1,\ldots,r$, we have $\chi(\Sigma) = \sum_i \left[ \chi(S_i) - N_i \right]$,
so the index formula \eqref{eqn:index} gives
\begin{equation}
\label{eqn:indexAdd}
\ind(u_k) = \sum_{i=1}^r \left[ \ind(u_\infty^i) - (n-3) N_i \right].
\end{equation}
Note that by the stability condition, we have 
\begin{equation}
\label{eqn:stability}
\chi(S_i) - N_i < 0 \quad \text{ whenever $A_i = 0$}.
\end{equation}
If $A_i \ne 0$, then $u_\infty^i = v^i \circ \varphi^i$ for some
simple curve $v^i$ and holomorphic map $\varphi^i$ of degree
$d_i \ge 1$ with $Z(d\varphi^i) \ge 0$ branch points, and the Riemann-Hurwitz
formula combined with \eqref{eqn:index} gives
\begin{equation}
\label{eqn:cover}
\ind(u_\infty^i) = d_i \cdot \ind(v^i) - (n-3) Z(d\varphi^i).
\end{equation}

\begin{proof}[Proof of Proposition~\ref{prop:pseudo}] 
Assume $J$ is chosen so that all somewhere injective curves are
Fredholm regular.  Then $\mM_{g,m}^*(A,J)$ is a manifold of real 
dimension $\ind(u) + 2m$ for any $u \in \mM_{g,m}^*(A,J)$.
The index relations \eqref{eqn:indexAdd} and \eqref{eqn:cover} 
imply that if $u_k \in \mM_{g,m}^*(A,J)$ is a 
sequence of simple curves with $\ind(u_k) > 0$ converging to a 
nodal curve $u_\infty$, then the nonconstant
components of $u_\infty$ cover simple curves whose indices add up to
at most $\ind(u_k) - 2$.  More concretely, if
$u_\infty$ has {smooth} components $u_\infty^1,\ldots,u_\infty^r$,
each $u_\infty^i$ having $N_i \ge 1$ nodal points,
then the $4$-dimensional case of \eqref{eqn:indexAdd} together with
the stability condition \eqref{eqn:stability} implies
\begin{equation}
\label{eqn:indexNodal}
\ind(u_k) \ge \sum_{\{ i\ |\ u_\infty^i \ne \text{const} \}} \left[ 
\ind(u_\infty^i) + N_i \right],
\end{equation}
with equality if and only if $u_\infty$ has no
constant (i.e.~``ghost'') components.  This shows in particular that
\begin{equation}
\label{eqn:indexNodal2}
\ind(u_k) \ge 2 + \sum_{\{ i\ |\ u_\infty^i \ne \text{const} \}} \ind(u_\infty^i).
\end{equation}
Now by \eqref{eqn:cover} in the case $n=2$, we see that
if $u_\infty^i$ is a $d_i$-fold cover of a simple curve $v^i$, then
$\ind(u_\infty^i) \ge d_i \ind(v^i)$, with equality if and only if
the cover is unbranched.  Since $\ind(v^i) \ge 0$ by genericity,
this implies that each smooth component $u^i_\infty$ has index at
least two less than $\ind(u_k)$.  On the other hand, if
$u_\infty = \lim u_k$ is a smooth curve that is a $d$-fold cover 
$v \circ \varphi$ of some simple curve $v$, then \eqref{eqn:cover} gives
$$
\ind(u_\infty) = d \cdot \ind(v) + Z(d\varphi) \ge d \cdot \ind(v),
$$
and since $\ind(u_\infty) > 0$ by assumption and the index is always
even, we conclude
$\ind(v) \le \ind(u_\infty) - 2$ unless $d=1$.
These relations imply the pseudocycle condition.
\end{proof}

\begin{proof}[Proof of Theorem~\ref{thm:finitely} and Corollary~\ref{cor:integral}]
Applying the index relations as in the proof of Proposition~\ref{prop:pseudo}
above, we find that the worst case scenario for a degenerating
sequence of index~$0$ curves $u_k \to u_\infty$ is that $u_\infty$ is
an \emph{unbranched} cover of a simple index~$0$ curve.  For generic
tame~$J$, Theorem~\ref{thm:super} implies that the latter is regular,
hence all curves in $\overline{\mM}_{g,0}(A,J)$ are smooth and regular, 
and therefore isolated due to the implicit function theorem.
The integrality condition in Corollary~\ref{cor:integral} arises from the
observation that whenever $u \in \mM_{g,0}(A,J)$ is a $d$-fold cover of
a simple curve $v \in \mM_{g',0}(B,J)$, we necessarily 
have $A = dB$ and $\omega(B) > 0$, and the order of the automorphism group
$\Aut(u)$ is an integer dividing~$d$.  For $g=0$ the integrality result is
stronger, because the Riemann-Hurwitz formula forbids the existence of
unbranched covers with genus~$0$, hence every curve in
$\mM_{0,0}(A,J)$ is simple.
\end{proof}

\subsection{Outline of the paper}

The main steps in the proof of Theorem~\ref{thm:super}
will be explained in \S\ref{sec:proof}, modulo three technical
results concerning (1)~the nonlinear problem, (2)~the linear problem,
and (3)~obstruction theory.  The remainder of the
paper will then be concerned with these three technical results:
the nonlinear result in \S\ref{sec:nonlinear},
the linear result in \rev{\S\ref{sec:Weitzenbock} and }\S\ref{sec:linear}, and the 
obstruction theoretic
result (which is only needed for the case $\dim_\RR M \ge 6$) in
\S\ref{sec:symmetry}.  These are followed by a brief appendix
recalling the essential result from analytic perturbation
theory that is needed in \S\ref{sec:linear}.

\subsection*{A brief remark on terminology}

{Since many important objects in this paper do not carry natural
complex structures, our formulas for dimensions and Fredholm indices
generally give the \emph{real} dimension unless otherwise noted,
even in cases where this number is always even.  The major exceptions
are the bundles $u^*TM$ and $N_u$ associated to a $J$-holomorphic
curve $u : (\Sigma,j) \to (M,J)$; these are naturally complex
vector bundles and are described in terms of their \emph{complex} rank.}

\subsection*{Acknowledgements}

The present paper emerged out of discussions between the two authors and
Michael Hutchings and Dan Cristofaro-Gardiner at the Simons Center's
\textsl{Workshop on Moduli Spaces of Pseudo-holomorphic Curves~II},
June 2--6, 2014.
We would like to thank Hutchings and Cristofaro-Gardiner 
for contributing useful ideas and encouragement, Helmut Hofer,
Dusa McDuff, Tim Pertuz, Cliff Taubes and 
Aleksey Zinger
for enlightening conversations, Daniel Rauch for sending us a copy of
his PhD thesis, and the Simons Center for its hospitality and for
providing such a stimulating environment for collaboration.
We also thank Eleny Ionel and Tom Parker for pointing out a crucial
error in our preliminary version of this paper.

\section{\rev{The main argument}}
\label{sec:proof}

The goal of this section will be to reduce the proof of 
Theorem~\ref{thm:super} to a sequence of three technical results to be proved 
in later sections.

\subsection{Unbranched tori in dimension four}
\label{sec:Taubes}

Before diving into the details on Theorem~\ref{thm:super}, 
it may be instructive to recall the argument 
of Taubes which has inspired the present approach to regularity for
multiple covers.
The Gromov invariants were defined in \cite{Taubes:counting,Taubes:SWtoGr} as certain
counts of holomorphic curves in symplectic $4$-manifolds, including both
embedded curves and unbranched covers of embedded holomorphic tori with
index~$0$.
In order to achieve transversality for the multiple covers, Taubes argued
in \cite{Taubes:SWtoGr}*{\S 7(b)}
as follows.  Assume $u : \TT^2 \to M$ is an embedded $J$-holomorphic torus with
index~$0$, $\varphi : \TT^2 \to \TT^2$ is a holomorphic covering map and
$\tilde{u} = u \circ \varphi$.  Then the normal Cauchy-Riemann operator
for $\tilde{u}$ can be identified with an operator of the form
$$
\mathbf{D} = \dbar + A : C^\infty(\TT^2,\CC) \to C^\infty(\TT^2,\CC),
$$
where $\dbar = \p_s + i\p_t$ in holomorphic coordinates $s + it$ on $\TT^2$
and $A \in C^\infty(\TT^2,\End_\RR(\CC))$.  Taubes shows that one can always
perturb the ambient almost complex structure along $u$ such that $\mathbf{D}$ becomes
$$
\mathbf{D}_\tau \eta := \mathbf{D}\eta + \tau \beta \bar{\eta}
$$
for some $\beta \in C^\infty(\TT^2,\CC^*)$ and a small parameter
$\tau \in \RR$.  This perturbation of the
linear operator is required to be complex-antilinear, and it must never vanish, 
but in contrast to the standard transversality arguments as in 
\cite{McDuffSalamon:Jhol}, it is allowed to be arbitrarily symmetric, so in
particular the fact that $\tilde{u}$ is a multiple cover poses no difficulty
here.  The main challenge is now to show that this perturbed
operator will always be injective for sufficiently small $\tau > 0$.
The argument for this involves two main ingredients.

\textsl{(1) Bochner-Weitzenb\"ock technique}:
The following argument shows that $\mathbf{D}_\tau$ must be injective
for all $\tau \gg 0$.  \rev{Fix the standard real-valued $L^2$-inner product
on $C^\infty(\TT^2,\CC)$ and let $\mathbf{D}^*$ and $\mathbf{D}_\tau^*$
denote the formal adjoints of $\mathbf{D}$ and $\mathbf{D}_\tau$ respectively;
explicitly, we have $\mathbf{D}^* = \p + A^*$ and 
$\mathbf{D}^*_\tau \eta = \mathbf{D}^* \eta + \tau \beta \bar{\eta}$,
where $\p = \p_s - i \p_t$ and $A^* \in C^\infty(\TT^2,\End_\RR(\CC))$ 
denotes the pointwise real-linear transpose of~$A$.  From these relations,
one obtains a Weitzenb\"ock formula,
\begin{equation}
\label{eqn:WeitzenbockDD}
\mathbf{D}^*_\tau \mathbf{D}_\tau \eta = \mathbf{D}^* \mathbf{D} \eta +
\tau L \eta + \tau^2 |\beta|^2 \eta,
\end{equation}
where $L \in C^\infty(\TT^2,\End_\RR(\CC))$ is the zeroth-order real-linear
operator $L\eta = \beta \overline{A \eta} + A^* \beta \bar{\eta} -
(\p \beta) \bar{\eta}$.  The crucial point in \eqref{eqn:WeitzenbockDD}
is that $\mathbf{D}^*_\tau \mathbf{D}_\tau \eta$ and $\mathbf{D}^*\mathbf{D}\eta$
differ only by a zeroth-order term---the
complex-\emph{anti}linear nature of the perturbation causes all other
derivatives of $\eta$ to cancel.
For all $\eta \in C^\infty(\TT^2,\CC)$, we then have
\begin{equation}
\begin{split}
\label{eqn:Weitzenbock}
\| \mathbf{D}_\tau \eta \|_{L^2}^2 &= 
\langle \eta , \mathbf{D}^*_\tau \mathbf{D}_\tau \eta \rangle_{L^2}
= \left\langle \eta , \mathbf{D}^*\mathbf{D} \eta + \tau L\eta + 
\tau^2 |\beta|^2 \eta \right\rangle_{L^2} \\
&= \| \mathbf{D} \eta \|_{L^2}^2 + \tau \langle \eta, L\eta \rangle_{L^2} +
\tau^2 \langle \eta , |\beta|^2 \eta \rangle_{L^2} \\ 
&\ge \| \mathbf{D} \eta \|_{L^2}^2 + (c \tau^2 - c' \tau) \| \eta \|_{L^2}^2
\end{split}
\end{equation}
for some constants $c , c' > 0$.  Here we have used the fact that
$\beta$ is nowhere zero so that $\langle \eta,|\beta|^2 \eta \rangle_{L^2} \ge
c \| \eta \|_{L^2}^2$.
}

\textsl{(2) Analytic perturbation theory}:
Regard $\mathbf{D}_\tau$ as a complex-linear operator 
$H^1(\TT^2,\CC) \to L^2(\TT^2,\CC)$, or more accurately on the 
complexifications of these two spaces.  Then $\mathbf{D}_\tau$ depends
analytically on the parameter $\tau \in \CC$, so the set of all
$\tau \in \CC$ for which $\mathbf{D}_\tau$ is not an isomorphism looks
locally like the zero-set of an analytic function on~$\CC$, 
i.e.~$\mathbf{D}_\tau$ has nontrivial kernel either for all~$\tau$ or only
for a discrete subset.  (A proof of this fact is given in the Appendix.)  
Step~(1) implies that it is the latter, not the former.

\begin{remark}
\label{remark:peculiarities}
The first step described above depends crucially on the following two
properties of the perturbation, both of which lend a distinctive
flavor to our main result:
\begin{enumerate}
\item The perturbation \rev{from $\mathbf{D}$ to $\mathbf{D}_\tau$} 
must be \emph{antilinear}, \rev{otherwise the Weitzenb\"ock formula
\eqref{eqn:WeitzenbockDD} does not hold}.
This implies that, in general, the generic almost
complex structures for which our transversality result holds can
\emph{never} be expected to be integrable.
\item The perturbation must also be \emph{nowhere zero} so that
$\| \eta \|_{L^2}$ can be bounded below via
\rev{$\langle \eta |\beta|^2 \eta \rangle_{L^2}$}
in \eqref{eqn:Weitzenbock}.  This is why our proof of Theorem~\ref{thm:super}
does not work for curves that only pass through the
perturbation domain rather than being fully contained in it
(see Remark~\ref{remark:local}).
\end{enumerate}
We will see that both of these features also appear in the general case
to be discussed below.
\end{remark}

\begin{remark}
A version of the Bochner-Weitzenb\"ock technique described above has also appeared
in the work of Lee and Parker on K\"ahler surfaces with positive geometric
genus, see 
\cite{LeeParker:structure}*{Proposition~8.6}.  In their more specialized
setting, the terms linear in $\tau$ vanish for geometric reasons,
thus one obtains super-rigidity for all (not necessarily small)
perturbations of the type that they consider, without any
need to apply analytic perturbation theory.
\end{remark}

\subsection{\rev{Three technical results for the general case}}

We now describe what is required in order to generalize the argument
of Taubes sketched above.

The first technical result we will need describes the perturbation of the 
normal Cauchy-Riemann operator realized by a certain class of
perturbations to the almost complex structure.  Working under the
assumptions of Theorem~\ref{thm:super}, suppose
$u : (\Sigma,j) \to (M,J)$ is an immersed
$J$-holomorphic curve with image fully contained in~$\uU$, 
choose a tangent/normal splitting $u^*TM = T_u \oplus N_u$
with $T_u = \im du$, and abbreviate 
the complex vector bundles
$$
E := N_u, \qquad F := \overline{\Hom}_\CC(T\Sigma,N_u) = T^{0,1}\Sigma \otimes E,
$$
both of which have rank $m := n-1$.  The normal Cauchy-Riemann operator
$\mathbf{D}_u^N$ then maps sections of $E$ to sections of~$F$.
Suppose $\{ J_\tau \in \jJ\tame(M,\omega\,;\,\uU,\Jfix) \}_{\tau \in (-\epsilon,\epsilon)}$
is a smooth $1$-parameter family of almost complex structures such that 
$$
J_0 \equiv J, \quad\text{ and }\quad
J_\tau|_{T_u} \equiv J|_{T_u} \text{ for all $\tau$}.
$$
Then $u : (\Sigma,j) \to (M,J_\tau)$ 
is $J_\tau$-holomorphic for all~$\tau$, though the previously chosen
normal bundle $N_u \subset u^*TM$ may fail to be $J_\tau$-invariant
for $\tau \ne 0$.  Nonetheless one can always find a smooth $1$-parameter
family of complex bundle isomorphisms
$$
\Phi_\tau : (TM,J) \to (TM,J_\tau)
$$
that fix $T_u$ and satisfy $\Phi_0 = \1$, 
allowing us to define perturbed complex normal bundles 
$N_{u,\tau} := \Phi_\tau(N_u)$ and
normal Cauchy-Riemann operators
$$
\mathbf{D}_{u,\tau}^N : \Gamma(N_{u,\tau}) \to \Gamma(\overline{\Hom}_\CC(T\Sigma,N_{u,\tau})),
$$
so that a $1$-parameter family of operators
$\Gamma(E) \to \Gamma(F)$ can be defined by
$$
\Phi_\tau^{-1}  \mathbf{D}_{u,\tau}^N \Phi_\tau : \Gamma(E) \to \Gamma(F).
$$
We will prove the following result in \S\ref{sec:nonlinear}.

\begin{prop}
\label{prop:nonlinear}
Assume the curve $u : (\Sigma,j) \to (M,J)$ in the above setup is
immersed with only transverse double points, such that no point in $M$
is in the image of more than two distinct points of~$\Sigma$.
Then given any real-linear bundle map $B : E \to F$, one can choose the 
families of $\omega$-tame almost complex structures
$\{ J_\tau \}$ and complex bundle isomorphisms $\{ \Phi_\tau \}$ as above 
such that 
$$
\Phi_\tau^{-1} \mathbf{D}_{u,\tau}^N \Phi_\tau = \mathbf{D}_u^N + \tau B.
$$
In particular, for any $p > 1$, this defines a family of Fredholm operators
$W^{1,p}(E) \to L^p(F)$ that depends analytically on
the parameter~$\tau$.  If $J$ is $\omega$-compatible and $u$ has
no double points, then one can also arrange that
$J_\tau \in \jJ\comp(M,\omega\,;\,\uU,\Jfix)$ for all~$\tau$.
\end{prop}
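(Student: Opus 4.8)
The plan is to reduce the proposition to a pointwise statement about $1$-jets of almost complex structures along $u$. The basic observation is that $\mathbf{D}^N_u$ depends on $J$ only through the $1$-jet of $J$ along $u(\Sigma)$: in \eqref{eqn:Du} the term $J(u)\circ\nabla\eta\circ j$ sees only $J|_{u(\Sigma)}$, the term $(\nabla_\eta J)\circ Tu\circ j$ sees only the first normal derivative of $J$ along $u$, and the choice of symmetric connection $\nabla$ is immaterial since $u$ is $J$-holomorphic. Now $J_\tau\equiv J$ on $T_u$, and I will moreover arrange $J_\tau\equiv J$ along all of $u(\Sigma)$, so that $N_{u,\tau}=N_u$ and $\pi_{N}$ are unchanged; then one may take $\Phi_\tau\equiv\1$ along $u$, and $\Phi_\tau^{-1}\mathbf{D}^N_{u,\tau}\Phi_\tau=\mathbf{D}^N_{u,\tau}$ depends affine-linearly on the $1$-jet of $J_\tau$ along $u$. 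It therefore suffices to produce a family $\{J_\tau\}$ of $\omega$-tame almost complex structures, equal to $\Jfix$ outside a small neighborhood of $u(\Sigma)$ contained in $\uU$, whose $1$-jet along $u$ is \emph{exactly} affine in $\tau$ and is chosen so that the zeroth-order term of $\mathbf{D}^N_{u,\tau}$ changes by precisely $\tau B$.

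To build such a family I would choose $Y\in\Gamma(\End(TM))$ supported near $u(\Sigma)$ in $\uU$ with $YJ+JY=0$ and $Y|_{u(\Sigma)}=0$ but with a prescribed first normal derivative $\nabla Y$ along $u$, and set $J_\tau:=J\exp(-\tau JY)$. Since $JY$ anticommutes with $J$, a one-line computation gives $J_\tau^2=-\1$ for all $\tau$ (so $J_\tau$ is a genuine almost complex structure) and $\partial_\tau|_{\tau=0}J_\tau=Y$; tameness on a fixed interval $(-\e,\e)$ is automatic because tameness is open and $J_\tau\to J$ in $C^\infty$, and $J_\tau\equiv\Jfix$ off $\supp Y$. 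Crucially, $Y|_{u(\Sigma)}=0$ forces $J_\tau=J+\tau Y+O(\dist(\cdot,u(\Sigma))^2)$ uniformly on compact $\tau$-intervals, so the $1$-jet of $J_\tau$ along $u$ equals that of $J+\tau Y$ and is genuinely affine in $\tau$. The induced perturbation of $\mathbf{D}^N_{u,\tau}$ is then $\tau B_Y$, where $B_Y(\eta)=\pi_N\big((\nabla_\eta Y)\circ Tu\circ j\big)$; this is a real-linear bundle map $E\to F$, landing in $F=\overline{\Hom}_\CC(T\Sigma,N_u)$ because $\nabla_\eta Y$ is complex-antilinear along $u$ when $Y|_{u(\Sigma)}=0$. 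In the $\omega$-compatible case one replaces $J\exp(-\tau JY)$ by the analogous closed-form path of $\omega$-compatible structures with derivative $Y$, where $Y$ is now additionally constrained to lie in $T_J\jJ\comp$.

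The remaining, and in my view central, task is to choose $Y$ so that $B_Y=B$, and this is where the geometric hypotheses are used. Over a point $z\in\Sigma$ not lying over a double point, $u$ is a local embedding and $Tu\circ j:T\Sigma\to T_u$ is a complex-linear isomorphism onto $T_u$; since $\nabla_\eta Y|_z$ can be prescribed as an arbitrary complex-antilinear map on the complex line $T_u|_z$ (and extended antilinearly, and arbitrarily, off it), the assignment $\nabla Y\mapsto B_Y$ is surjective onto all real-linear bundle maps $N_u\to F$ near $z$, compatibly with $Y|_{u(\Sigma)}=0$ and $YJ+JY=0$. Over a transverse double point $p=u(z_1)=u(z_2)$, a single perturbation of $J$ near $p$ simultaneously affects $\mathbf{D}^N_{u,\tau}$ near $z_1$ and near $z_2$, so one must realize the prescribed $B|_{z_1}$ and $B|_{z_2}$ by one value of $\nabla J(p)$: this is possible because transversality of the two sheets $T_u^{(1)}|_p, T_u^{(2)}|_p$, together with the hypothesis that no third sheet passes through $p$, places the two constraints in independent slots of $\nabla J(p)$, compatibly with the linearized condition that each $\nabla_w J(p)$ be complex-antilinear. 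This step is where I expect the real work to lie, and it is exactly where the hypothesis ``only transverse double points, no triple points'' is needed: in dimension four a third sheet would force $T_u^{(3)}|_p\subset T_u^{(1)}|_p\oplus T_u^{(2)}|_p=T_pM$, making the constraints conflict. The analogous failure for \emph{two} sheets in the $\omega$-compatible case — where the extra symmetry constraint shrinks the space of available perturbations so that two simultaneous constraints need not be jointly solvable — is the reason embeddedness is imposed there, as will be explained in Remark~\ref{remark:whyNotCompatible}. Finally, analyticity in $\tau$ of the Fredholm family $W^{1,p}(E)\to L^p(F)$ is immediate once $\mathbf{D}^N_{u,\tau}=\mathbf{D}^N_u+\tau B$ has been achieved.
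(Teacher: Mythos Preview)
Your approach is essentially correct in the embedded case and is then arguably cleaner than the paper's: imposing $Y|_{u(\Sigma)}=0$ makes $J_\tau=J$ along $u$, so $\Phi_\tau\equiv\1$ and the computation collapses to a one-line linearization. The paper instead allows $Y$ to have a nonzero upper-triangular block $Y^{NT}:N_u\to T_u$ along $u$ and uses the explicit conjugation $J_\tau=\Phi_\tau J\Phi_\tau^{-1}$ with $\Phi_\tau=\1+\tfrac{1}{2}\tau JY$; this costs a longer calculation (their Lemma~3.2) but buys flexibility that is essential at double points.

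There is, however, a genuine gap in your treatment of double points in dimension four. You impose $Y|_{u(\Sigma)}=0$, i.e.\ $Y$ vanishes on \emph{both} local sheets through a transverse double point $p=u(z_1)=u(z_2)$. Since $T_u^{(1)}|_p\oplus T_u^{(2)}|_p=T_pM$, vanishing of $Y$ along each sheet forces $\nabla_X Y|_p=0$ for every $X\in T_pM$. In particular, for $\eta\in (N_u)_{z_1}=T_u^{(2)}|_p$ one has $\nabla_\eta Y|_p=0$, hence $B_Y|_{z_1}=\pi_N\big((\nabla_\eta Y)\circ Tu\circ j\big)=0$, and likewise $B_Y|_{z_2}=0$. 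So your ``independent slots'' argument cannot succeed: the constraint $Y|_{u(\Sigma)}=0$ has already killed all the slots, and you cannot realize any $B$ that is nonzero at the preimages of double points.

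The paper's remedy is exactly to drop the requirement $Y|_{u(\Sigma)}=0$ and allow $Y^{NT}\ne 0$ along each sheet near $p$ (with $Y(p)=0$ forced only at the point itself, since the two block forms conflict there). The prescribed normal derivative at $z_1$ then becomes a \emph{tangential} derivative of $Y^{NT}$ along sheet~2 (because $(N_u)_{z_1}=(T_u)_{z_2}$), and one realizes it by choosing the $1$-jet of $Y^{NT}$ along sheet~2 appropriately near $p$; symmetrically for $z_2$. This is precisely why the paper needs the nontrivial $\Phi_\tau$ to track the $J_\tau$-invariant normal bundle once $J_\tau\ne J$ along $u$. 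Your proof can be repaired by adopting this weaker block-form condition on $Y$ along $u$ in place of $Y|_{u(\Sigma)}=0$, at the cost of reinstating a nontrivial $\Phi_\tau$.
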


Continuing with the above setup, assume now that
$\ind(u) = 0$.  Then $0$ is also the index of $\mathbf{D}_u^N$, which
is $m \chi(\Sigma) + 2 c_1(E)$, hence $-c_1(E) = m \chi(\Sigma) + c_1(E) =
c_1(F)$, implying the existence of a complex-antilinear bundle isomorphism
$B : E \to F$.  Let $\langle \ ,\ \rangle$ denote a Hermitian bundle metric
on $E$, and denote its real part by $\langle\ ,\ \rangle_\RR$; if
$J$ is $\omega$-compatible, we may assume that
$\langle\ ,\ \rangle_\RR$ matches
the restriction of $\omega(\cdot,J\cdot)$ to $N_u$.
For our linear transversality argument, it will be
important to establish the following symmetry property for $B$, which
will be possible due to an obstruction theoretic argument explained
in~\S\ref{sec:symmetry}.
{Note that the condition described here is vacuous when $E$ is a
line bundle, so this step did not appear in Taubes's argument of
\S\ref{sec:Taubes} and is only needed for the higher-dimensional case.}

\begin{prop}
\label{prop:symmetry}
Every homotopy class of complex-antilinear bundle isomorphisms 
$B : E \to \overline{\Hom}_\CC(T\Sigma,E)$ contains one that satisfies the
following condition: for all
$z \in \Sigma$, $X \in T_z \Sigma$ and $\xi, \eta \in E_z$,
$$
\langle \xi , B\eta(X) \rangle_\RR = \langle B\xi(X) , \eta \rangle_\RR.
$$
\end{prop}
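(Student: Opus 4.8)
The condition we seek says that, viewing $B$ as a real-linear bundle map $E \to \overline{\Hom}_\CC(T\Sigma,E)$, the associated "bilinear form" $(\xi,\eta) \mapsto \langle \xi, B\eta(X)\rangle_\RR$ should be symmetric in $\xi$ and $\eta$ for each $X \in T\Sigma$. The plan is to decompose the space of all such bundle maps pointwise into a "symmetric" summand and an "antisymmetric" summand, show that only the symmetric part contributes to the relevant obstruction/homotopy invariant, and then argue that any $B$ can be deformed through complex-antilinear isomorphisms into a representative lying in the symmetric summand.

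First I would set up the linear algebra fiberwise. Fix $z$ and let $V = E_z$, a complex vector space of rank $m$ with Hermitian metric $\langle\ ,\ \rangle$; tensoring with the complex line $T^{0,1}_z\Sigma$ changes nothing essential, so it suffices to analyze complex-antilinear maps $B : V \to V$ (absorbing the $T^{0,1}\Sigma$ factor) and the pairing $\beta(\xi,\eta) = \langle\xi, B\eta\rangle_\RR$. Because $B$ is complex-antilinear, $\beta$ is a real bilinear form on the underlying real vector space $V_\RR$, and one checks that $\beta(i\xi,i\eta) = -\beta(\xi,\eta)$; the symmetry condition $\beta(\xi,\eta) = \beta(\eta,\xi)$ then cuts out a real-linear subspace of the space of complex-antilinear endomorphisms. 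The key point is a dimension count: I would verify that the symmetric antilinear maps form a subspace whose inclusion into all antilinear isomorphisms is a homotopy equivalence — equivalently, that the "antisymmetric" complement is small enough (in fact contractible onto a point, or at least that the relevant obstruction groups vanish) that one can homotope any antilinear isomorphism into symmetric form without leaving the space of isomorphisms. Concretely, a complex-antilinear automorphism of $V$ can be written as $B_0 \circ C$ for a fixed reference $B_0$ and $C \in \GL(V,\CC)$, reducing the question to the homotopy type of $\GL(m,\CC)$ together with an involution coming from the transpose/symmetry condition relative to $B_0$; the symmetric ones then correspond to a subgroup or symmetric space whose inclusion is $(2m-1)$-connected or better, which exceeds $\dim\Sigma = 2$.

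Next I would globalize. The homotopy class of a complex-antilinear bundle isomorphism $B : E \to \overline{\Hom}_\CC(T\Sigma,E)$ is an element of $[\Sigma, X]$ for the relevant classifying-type space $X$ (a bundle with fiber the space of antilinear isomorphisms of $\CC^m$); the symmetric condition picks out a subspace $X_{\mathrm{sym}} \subset X$. Since $\dim \Sigma = 2$, obstruction theory says that the inclusion $X_{\mathrm{sym}} \hookrightarrow X$ induces a bijection on $[\Sigma, -]$ provided it is $2$-connected, i.e.\ provided $\pi_0, \pi_1, \pi_2$ of the pair vanish — which follows from the fiberwise connectivity estimate above. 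Hence every homotopy class of antilinear isomorphisms has a representative mapping into $X_{\mathrm{sym}}$, i.e.\ satisfying the stated symmetry. When $J$ is $\omega$-compatible one takes $\langle\ ,\ \rangle_\RR = \omega(\cdot, J\cdot)|_{N_u}$ as allowed by hypothesis, and the argument is unchanged since only the real inner product enters.

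The main obstacle I anticipate is the fiberwise connectivity claim: pinning down exactly what space "symmetric complex-antilinear isomorphisms of $\CC^m$" is (it should be identifiable with something like $\GL(m,\CC)/\Ortho(m,\CC)$ or a real form thereof, or with symmetric invertible matrices via $B \mapsto$ its matrix in a unitary frame) and then checking that its inclusion into all antilinear isomorphisms — homotopy equivalent to $\GL(m,\CC) \simeq \U(m)$ — is at least $2$-connected on relative homotopy. For $m = 1$ the condition is vacuous (as the paper notes), so the content is genuinely in $m \ge 2$, and the bookkeeping of the involution and of the $T^{0,1}\Sigma$ twist over a surface of possibly nonzero genus is where care is needed; but since only $\pi_{\le 2}$ matters, even a crude connectivity bound should suffice.
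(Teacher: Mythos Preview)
Your outline is essentially the paper's approach: reduce to a fiberwise statement and use obstruction theory over the $2$-complex $\Sigma$, so that only the relative $\pi_1$ and $\pi_2$ of the pair (symmetric antilinear isomorphisms inside all antilinear isomorphisms) need to vanish. You even correctly guess the relevant identifications --- composing with conjugation turns the pair into $(\GL(m,\CC),\GL^S(m,\CC))$, and $\GL^S(m,\CC)\cong \GL(m,\CC)/\Ortho(m,\CC)$ via $A\mapsto A^TA$ --- and you rightly flag the connectivity computation as the crux.

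Two corrections to your side remarks, and then the missing content. First, the linear ``symmetric/antisymmetric'' decomposition is not directly usable: you must stay inside the open set of \emph{invertible} antilinear maps, and projecting to the symmetric part can destroy invertibility, so this does not reduce the problem. Second, the inclusion is not a homotopy equivalence, and the $(2m-1)$-connectedness guess is unsubstantiated; fortunately you only need the relative $\pi_1,\pi_2$ to vanish. The paper supplies exactly that computation: using the fibration $\Ortho(m,\CC)\hookrightarrow\GL(m,\CC)\to\GL(m,\CC)/\Ortho(m,\CC)$ and the fact that $\Ortho(m,\CC)$ has two components, one finds $\pi_1\big(\GL(m,\CC)/\Ortho(m,\CC)\big)\cong\ZZ$ with an explicit generator $t\mapsto\operatorname{diag}(e^{\pi i t},1,\ldots,1)$; its image under $A\mapsto A^TA$ is a generator of $\pi_1(\GL(m,\CC))\cong\ZZ$, so the inclusion $\GL^S(m,\CC)\hookrightarrow\GL(m,\CC)$ is a $\pi_1$-isomorphism. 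The long exact sequence of the pair, together with $\pi_2(\GL(m,\CC))=\pi_2(\U(m))=0$, then gives the vanishing of both relative groups.
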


The remaining crucial ingredient will be a generalization of Taubes's
analytic perturbation theory argument described in \S\ref{sec:Taubes}.
Fix $B : E \to F$ as given by Proposition~\ref{prop:symmetry}, and
assume $\varphi : (\widetilde{\Sigma},\tilde{\jmath}) \to (\Sigma,j)$ 
is a holomorphic map of degree $d \ge 1$.  The generalized normal
bundle of $\tilde{u} := u \circ \varphi$ is 
then $\widetilde{E} := N_{\tilde{u}} = \varphi^*E$, and we define
$\widetilde{F} := \overline{\Hom}_\CC(T\widetilde{\Sigma},\widetilde{E})$
so that $\mathbf{D}_{\tilde{u}}^N$ maps $\Gamma(\widetilde{E})$ to $\Gamma(\widetilde{F})$.
If $\{ J_\tau \}$ is a $1$-parameter family of almost complex
structures as in Proposition~\ref{prop:nonlinear} so that
$\mathbf{D}_{u,\tau}^N$ for each $\tau$ is conjugate to 
$\mathbf{D}_u^N + \tau B$, then the resulting perturbed
normal Cauchy-Riemann operators $\mathbf{D}_{\tilde{u},\tau}^N$
are conjugate to the family
$$
\mathbf{D}_{\tilde{u}}^N + \tau B_\varphi, : \Gamma(\widetilde{E}) \to
\Gamma(\widetilde{F}),
$$
where
$$
B_\varphi : \varphi^*E \to \overline{\Hom}_\CC(T\widetilde{\Sigma},\varphi^*E) :
\eta \mapsto B\eta \circ T\varphi.
$$
We will prove the following in \S\ref{sec:linear}\rev{, using a Weitzenb\"ock
formula developed in \S\ref{sec:Weitzenbock}}.

\begin{prop}
\label{prop:linear}
Given any $B$ and $\varphi$ as described above, the operator
$\mathbf{D}_{\tilde{u}}^N + \tau B_\varphi$ 
is injective for all $\tau \in \RR$ outside of a discrete subset.
\end{prop}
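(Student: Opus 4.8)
\emph{Strategy.} The plan is to reproduce the two ingredients of Taubes's argument from \S\ref{sec:Taubes}: a Bochner--Weitzenb\"ock estimate that makes the operator injective for $\tau$ large, and analytic perturbation theory to propagate injectivity to all but a discrete set of $\tau$. Equip $\widetilde{E} = \varphi^*E$ with the pulled-back Hermitian metric and form the completions $\mathbf{D}_{\tilde{u}}^N + \tau B_\varphi : W^{1,2}(\widetilde{E}) \to L^2(\widetilde{F})$; by elliptic regularity, injectivity of this family is independent of such choices. Passing to the complexifications of these two Banach spaces turns $\tau \mapsto \mathbf{D}_{\tilde{u}}^N + \tau B_\varphi$ into an affine, hence entire, family of complex-linear Fredholm operators, and the analytic perturbation theory result recalled in the appendix produces a discrete subset of $\CC$ off which $\dim_\CC\ker$ is constant and equal to its minimum over $\CC$. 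Since a real-linear operator is injective if and only if its complexification is, it therefore suffices to exhibit a single $\tau_0 \in \RR$ for which $\mathbf{D}_{\tilde{u}}^N + \tau_0 B_\varphi$ is injective: the minimum above is then $0$, so $\mathbf{D}_{\tilde{u}}^N + \tau B_\varphi$ is injective for all $\tau$ outside a discrete subset of $\CC$, in particular for all real $\tau$ outside a discrete subset of $\RR$. I would obtain $\tau_0$ by taking $\tau$ large.

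\emph{The Bochner--Weitzenb\"ock estimate.} Write $\mathbf{D}_{\tilde{u}}^N = \dbar_\nabla + \mathfrak{A}$, with $\dbar_\nabla$ a $\CC$-linear Cauchy--Riemann operator and $\mathfrak{A}$ a $\CC$-antilinear zeroth-order term, so that for $\eta \in W^{1,2}(\widetilde{E})$,
$$
\| (\mathbf{D}_{\tilde{u}}^N + \tau B_\varphi)\eta \|_{L^2}^2 = \| \mathbf{D}_{\tilde{u}}^N\eta \|_{L^2}^2 + \tau^2 \| B_\varphi\eta \|_{L^2}^2 + 2\tau \Re \int_{\widetilde{\Sigma}} \langle \mathbf{D}_{\tilde{u}}^N\eta , B_\varphi\eta \rangle \, \vol .
$$
The task is to bound the cross term below by $-C\tau \|B_\varphi\eta\|_{L^2}^2$ with $C$ independent of $\tau$. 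The contribution of $\mathfrak{A}$ is zeroth order in $\eta$ and is controlled pointwise by $|\mathfrak{A}\eta|\,|B_\varphi\eta| \le C|B_\varphi\eta|^2$. For the contribution of $\dbar_\nabla\eta$, the complex-\emph{anti}linearity of $B_\varphi$ makes $\langle \dbar_\nabla\eta , B_\varphi\eta\rangle$ a $\CC$-bilinear pairing of $\dbar_\nabla\eta$ with $\eta$; the symmetry condition on $B$ supplied by Proposition~\ref{prop:symmetry} makes the real part of this pairing symmetric in its two entries, so after integration by parts it reduces to a zeroth-order expression in $\eta$ whose coefficients are built from first derivatives of the coefficient of $B_\varphi$ --- and, because antilinearity makes the derivative fall on the \emph{conjugate} of that coefficient rather than on the coefficient itself, no term of lower vanishing order appears. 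When $\varphi$ is unbranched, $B_\varphi$ is a bundle isomorphism, so $\|B_\varphi\eta\|_{L^2}^2 \ge c\|\eta\|_{L^2}^2$ and the right-hand side is $\ge (\tau^2 - C\tau)c\|\eta\|_{L^2}^2 > 0$ for $\tau \gg 0$; this settles the proposition in the unbranched case, which, via the reduction carried out in \S\ref{sec:proof}, is the only case needed for Theorem~\ref{thm:super}, since spaces of unbranched covers carry no moduli.

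\emph{The branched case and the main obstacle.} When $\varphi$ has branch points, $B_\varphi = B(\cdot)\circ T\varphi$ vanishes along the branch locus, to order $e-1$ where $e$ is the local ramification index, so $\|B_\varphi\eta\|_{L^2}^2$ no longer dominates $\|\eta\|_{L^2}^2$. The point is that $\mathbf{D}_{\tilde{u}}^N$, being built from $\mathbf{D}_u^N$ by pullback along $\varphi$, degenerates in exactly the same way: in a holomorphic coordinate $w$ with $\varphi(w) = w^e$ and in a trivialization pulled back from $E$, the operator takes the form $\partial_{\bar w} + \bar w^{\,e-1} R_\tau$ with $R_\tau$ a bounded zeroth-order bundle map of size $\sim\tau$ that is invertible for $\tau$ large (it is dominated by the $\tau B_\varphi$ term, whose reduced coefficient is invertible because $B$ is), while $|B_\varphi\eta| \ge c\,|w|^{e-1}|\eta|$. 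Choosing the metric on $\widetilde{\Sigma}$ flat and the connection on $\widetilde{E}$ pulled back near the branch points, the integration-by-parts error from the previous paragraph acquires a coefficient of the form $\partial_w(\bar w^{\,e-1}(\text{pullback})) = O(|w|^{2(e-1)})$, hence is again dominated by $\tau^2|B_\varphi\eta|^2 \sim \tau^2|w|^{2(e-1)}|\eta|^2$; so the global estimate closes as before and, for $\tau$ large, $\| (\mathbf{D}_{\tilde{u}}^N + \tau B_\varphi)\eta \|_{L^2}^2 \ge (\tau^2 - C\tau)\|B_\varphi\eta\|_{L^2}^2$, which forces $\eta$ to vanish off the branch locus and so everywhere. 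I expect the bookkeeping at the branch points --- verifying that all the zeroth-order data of $\mathbf{D}_{\tilde{u}}^N$ and all integration-by-parts error coefficients vanish there with orders compatible with that of $B_\varphi$, in particular that antilinearity prevents an $O(|w|^{e-2})$ term from occurring --- to be the main technical obstacle; as the computation indicates, this difficulty is entirely absent when $\varphi$ is unbranched. (Alternatively, the same small-scale analysis can be organized by rescaling $w = \tau^{-1/e}\omega$ near each branch point, which converts the operator to a $\tau$-independent model $\partial_{\bar\omega} + \bar\omega^{\,e-1}(\text{constant antilinear})$ on $\CC$ whose injectivity, in a weighted sense, is what the rescaled estimate establishes.)
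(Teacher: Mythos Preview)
Your proposal is correct and follows essentially the same route as the paper: reduce via analytic perturbation theory (complexify and apply the appendix result) to showing injectivity for $\tau \gg 0$, then prove this via the Bochner--Weitzenb\"ock identity, using the symmetry of $B$ to integrate the $\langle \dbar_\nabla\eta, B_\varphi\eta\rangle$ term by parts to a zeroth-order expression, and checking near branch points that both the $A_\varphi$ term and the $\p_\nabla\beta_\varphi$ error vanish to the right order (namely $|w|^{e-1}$ and $|w|^{2(e-1)}$ respectively) to be dominated by $|B_\varphi\eta|^2$. The paper carries out the integration-by-parts step in full detail using the Hodge star and the formal adjoint of $\dbar_\nabla$ (arriving at the identity $\int \langle B_\varphi\eta,\dbar_\nabla\eta\rangle_\RR\,\vol = \frac{1}{2}\Re\int\overline{\p_\nabla\beta_\varphi}\wedge\eta\wedge J\eta$), and then verifies the two local estimates $|A_\varphi\eta|\le c|B_\varphi\eta|$ and $|\overline{\p_\nabla\beta_\varphi}\wedge\eta\wedge J\eta|\le c|B_\varphi\eta|^2\,\vol$ explicitly in the coordinate model $\varphi(z)=z^p$, exactly as you anticipate; your rescaling alternative is not used.
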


\subsection{\rev{Proof of Theorem~\ref{thm:super}}}

\rev{Assuming Propositions~\ref{prop:nonlinear}, \ref{prop:symmetry} 
and~\ref{prop:linear}, we now prove the main result.}
The following topological argument is also inspired by ideas of Taubes
(cf.~\cite{McDuffSalamon:Jhol}*{pp.~52--53} or \cite{Wendl:lecturesV2}*{\S 4.4.2}).
We shall carry out the argument first in the setting of embedded holomorphic
curves and compatible almost complex structures, and then explain what
modifications are needed for the immersed/tame case.

Fix \rev{an integer $g \ge 0$,
a homology class $A \in H_2(M)$ and a
closed connected and oriented surface $\Sigma$ of genus~$g$.
Recall that the Teichm\"uller space $\tT(\Sigma) = 
\jJ(\Sigma) / \Diff_0(\Sigma)$ is
a smooth manifold diffeomorphic to $\CC^N$, with
$N = 3g-3$ for $g \ge 2$ or $N = g$ for $g = 0,1$.  In particular,
$\tT(\Sigma)$ is contractible, allowing us to fix a smooth family of
complex structures
$$
\{ j_x \in \jJ(\Sigma) \}_{x \in \CC^N}
$$
for which the natural projection to $\tT(\Sigma)$ is bijective.
Fix Riemannian metrics on $\Sigma$ and $M$, denoting
the resulting distance functions all by $\dist(\ ,\ )$.  
Now for any $J \in \jJ(M\,;\,\uU,\Jfix)$ and $N \in \NN$, define
$$
\mM_g(A,J,N) \subset \mM_{g,0}(A,J)
$$
to consist of every equivalence class in $\mM_{g,0}(A,J)$ admitting a
representative of the form $(\Sigma,j_x,u)$ such that the following conditions are
satisfied:}
\begin{enumerate}
\item \rev{$j_x$ is} ``not close to degenerating'':
$$
\rev{|x| \le N}
$$
\item $u$ is ``not close to bubbling'':
$$
| du(z) | \le N \quad\text{ for all $z \in \Sigma$};
$$
\item $u$ is ``not close to being non-embedded'':
$$
\min_{z \in \Sigma} | du(z) | \ge \frac{1}{N} ,\quad\text{ and }\quad
\inf_{z,\zeta \in \Sigma,\ z \ne \zeta} \frac{\dist(u(z),u(\zeta))}{\dist(z,\zeta)}
\ge \frac{1}{N};
$$
\item $u$ is ``not close to escaping~$\uU$'':
$$
\dist\left(u(\Sigma), M \setminus \uU\right) \ge \frac{1}{N}.
$$
\end{enumerate}
The union of the subsets $\mM_{g}(A,J,N)$ for all $N \in \NN$ consists
precisely of all curves in $\mM_{g,0}(A,J)$ that are embedded and
contained in~$\uU$.  \rev{We claim that for any fixed $N \in \NN$,
$\mM_g(A,J,N)$ is compact---in fact:}

\rev{
\begin{lemma}
For any $N \in \NN$ and any convergent sequence $J_k \to J
\in \jJ(M\,;\,\uU,\Jfix)$, every sequence
$u_k \in \mM_g(A,J_k,N)$ has a subsequence converging to an
element of $\mM_g(A,J,N)$.
\end{lemma}
\begin{proof}
By assumption, the given sequence admits representatives
of the form $(\Sigma,j_{x_k},u_k)$ that each satisfy the four conditions
listed above.  Condition~(1) implies $|x_k| \le N$ for all~$k$, so we can
take a subsequence for which the complex structures $j_{x_k}$
converge to some~$j_x$ with $|x| \le N$.  The second condition then implies 
via elliptic regularity that
after passing to a further subsequence, the maps $u_k$ converge
in $C^\infty$ to a pseudoholomorphic map
$u : (\Sigma,j_x) \to (M,J)$ with $|du| \le N$ everywhere.
Given this convergence, (3) and~(4) are both closed conditions
and are thus also satisfied by~$u$, so 
$(\Sigma,j_x,u)$ represents an element of $\mM_g(A,J,N)$.
\end{proof}
}

Now for each $N \in \NN$, define
$$
\jJ_\reg(N) \subset \jJ\comp(M,\omega\,;\,\uU,\Jfix)
$$
to consist of all $J \in \jJ\comp(M,\omega\,;\,\uU,\Jfix)$ with the property 
that for every index~$0$ curve $[(\Sigma,j,u)] \in \mM_{g}(A,J,N)$ and
every unbranched holomorphic cover
$\varphi : (\widetilde{\Sigma},\tilde{\jmath}) \to (\Sigma,j)$ of
degree at most~$N$, the curve $\tilde{u} = u \circ \varphi$ is Fredholm
regular.

We claim that $\jJ_\reg(N)$ is open.  If this is not the case, 
then there exists a sequence
$J_k \in \jJ\comp(M,\omega\,;\,\uU,\Jfix)$ converging to $J \in \jJ_\reg(N)$,
together with a sequence $[(\Sigma,j_k,u_k)] \in \mM_{g}(A,J_k,N)$ and
unbranched covers $\varphi_k : (\widetilde{\Sigma}_k,\tilde{\jmath}_k)
\to (\Sigma,j_k)$ with $\deg(\varphi_k) \le N$
for which $\ind(u_k) = 0$ but $u_k \circ \varphi_k$ is not regular.  But then 
$[(\Sigma,j_k,u_k)]$ has a subsequence converging to an element 
$[(\Sigma,j,u)] \in \mM_{g}(A,J,N)$, and since each $(\Sigma,j_k)$ has
only finitely many unbranched covers of degree at most~$N$ up to
biholomorphic equivalence, we may also assume after reparametrization
that a subsequence of $\varphi_k$ converges to another unbranched cover
$\varphi : (\widetilde{\Sigma},\tilde{\jmath}) \to (\Sigma,j)$ of degree
at most~$N$.  Since
$J \in \jJ_\reg(N)$, $u \circ \varphi$ is regular, but this condition is
open and thus gives
a contradiction.

We claim next that $\jJ_\reg(N)$ is dense.  To see this, note first that by the
standard transversality theory as in \cite{McDuffSalamon:Jhol}, any
$J \in \jJ\comp(M,\omega\,;\,\uU,\Jfix)$ has a perturbation  
\rev{$J' \in \jJ\comp(M,\omega\,;\,\uU,\Jfix)$}
for which all curves in \rev{$\mM_{g}(A,J',N)$ are} Fredholm regular, as all of
them have injective points mapped into~$\uU$.  Since \rev{$\mM_{g}(A,J',N)$} 
is compact, the set of index~$0$ curves in
\rev{$\mM_{g}(A,J',N)$ is now} finite.  
For each individual such curve
$[(\Sigma,j,u)]$ and each unbranched cover
$\varphi : (\widetilde{\Sigma},\tilde{\jmath}) \to (\Sigma,j)$, the 
combination of Propositions~\ref{prop:nonlinear}, 
\ref{prop:symmetry} and \ref{prop:linear} provides a $1$-parameter family of
perturbed almost complex structures 
$\{J_\tau \in \jJ\comp(M,\omega\,;\,\uU,\Jfix)\}$ \rev{with $J_0 = J'$} such that 
the normal Cauchy-Riemann operator of $u \circ \varphi$
becomes injective for sufficiently small $\tau > 0$. \rev{Note that by the
implicit function theorem, there is a natural bijective correspondence
between the sets of index~$0$ curves in $\mM_g(A,J',N)$ and $\mM_g(A,J_\tau,N)$
for $\tau$ sufficiently small.  Now since the set of
covers $u \circ \varphi$ with $u \in \mM_{g}(A,J',N)$, $\ind(u) = 0$
 and $\deg(\varphi) \le N$
is finite up to biholomorphic equivalence, one can repeat this procedure
finitely many times to obtain an arbitrarily small perturbation 
$J''$ of $J'$ for which all such covers become regular,
meaning $J'' \in \jJ_\reg(N)$}.

Finally, the desired Baire subset can be defined as the countable intersection
of the sets $\jJ_\reg(N)$ for all possible $N \in \NN$, $g \ge 0$ and $A \in H_2(M)$,
thus concluding the proof of Theorem~\ref{thm:super} for embedded curves.

\begin{remark}
\label{remark:fail}
The difficulty in using this method to prove super-rigidity for 
\emph{branched} covers is that for a given $(\Sigma,j)$ and $N \in \NN$,
the set of inequivalent branched covers of $(\Sigma,j)$ with degree at most~$N$
is generally uncountable, so there is no guarantee that any single perturbation
$J_\tau$ could make the normal operator injective for all of them at once.
The analytic perturbation trick unfortunately provides no obvious control
over the function
$$
\varphi \mapsto \sup \left\{ \tau_0 > 0\ |\ 
\text{$\mathbf{D}_{u \circ \varphi}^N$ defined with respect to
$J_\tau$ is injective for all $\tau \in (0,\tau_0]$} \right\},
$$
e.g.~it could vary discontinuously as $\varphi$ moves in the moduli
space of branched covers.
\end{remark}

The above argument could also be repeated verbatim to find corresponding
Baire subsets of $\jJ(M\,;\,\uU,\Jfix)$ and $\jJ\tame(M,\omega\,;\,\uU,\Jfix)$
that establish regularity for unbranched covers of embedded curves.  
This means \emph{all} 
simple curves without loss of generality if $\dim_\RR M \ge 6$,
but a modified argument is needed
in dimension four to handle curves with self-intersections.  If
$\dim_\RR M = 4$, we modify the definition of $\mM_{g}(A,J,N)$ as follows.
For any simple curve $u \in \mM_{g,0}(A,J)$, define the integer $d(u) \ge 0$
by
$$
2 d(u) = \big| \left\{ (z,\zeta) \in \Sigma \times \Sigma\ |\ 
\text{$u(z) = u(\zeta)$ and $z \ne \zeta$} \right\} \big|.
$$
Recall that by the adjunction inequality, this number satisfies
$$
A \cdot A \ge 2 d(u) + c_1(A) - (2 - 2g),
$$
with equality if and only if $u$ is immersed with only transverse double
points.  With this in mind, define
$$
d(A,g) := \frac{1}{2}\left( A \cdot A - c_1(A) \right) + 1 - g,
$$
and define $\mM_{g}(A,J,N)$ via conditions (1), (2) and (4) above, plus
the following replacement of condition~(3):
\begin{enumerate}
\renewcommand{\labelenumi}{(3\alph{enumi})}
\item $\displaystyle \min_{z \in \Sigma} | du(z) | \ge \frac{1}{N}$;
\item There exists a point $z_0 \in \Sigma$ such that
$$
\inf_{z \in \Sigma\setminus\{z_0\}} \frac{\dist(u(z_0),u(z))}{\dist(z_0,z)}
\ge \frac{1}{N};
$$
\item $M$ contains $d := d(A,g)$ distinct points $p_1,\ldots,p_d \in M$ 
at which $| u^{-1}(p_j) | > 1$, and
$$
\dist\left( (p_1,\ldots,p_d) , \Delta \right) \ge \frac{1}{N},
$$
where $\Delta \subset M^d$ denotes the set of tuples $(x_1,\ldots,x_d)$ for
which at least two of the points coincide.
\end{enumerate}
The adjunction inequality implies that every curve in 
$u \in \mM_{g}(A,J,N)$ is immersed with transverse double points, all at
distinct points in the image, and 
$\bigcup_{N \in \NN} \mM_{g}(A,J,N)$ now consists of all curves in
$\mM_{g,0}(A,J)$ that have these properties.
The only other modification needed from the embedded case is in the proof that
$\jJ_\reg(N)$ is dense.  This is where we need to allow
$J \in \jJ\tame(M,\omega\,;\,\uU,\Jfix)$ instead of $\jJ\comp(M,\omega\,;\,\uU,\Jfix)$,
as Proposition~\ref{prop:nonlinear} does not provide an $\omega$-compatible
perturbation if $u$ has double points.  Note however that after a small 
perturbation of any given $J$, we are free to assume that all simple index~$0$
curves are immersed with transverse double points at separate points in 
the image (see \rev{e.g.~\cite{Wendl:lecturesV2}*{Exercise~4.65 and \S 4.6}}), in which case 
Propositions~\ref{prop:nonlinear} and~\ref{prop:linear}
can be used to find an $\omega$-tame perturbation in $\jJ_\reg(N)$.
With this established, the rest of the proof goes through as before.
\qed

\section{Normal perturbations of almost complex structures}
\label{sec:nonlinear}

The purpose of this section is to prove Proposition~\ref{prop:nonlinear}.
Fix a tame almost complex structure
$J \in \jJ\tame(M,\omega\,;\,\uU,\Jfix)$ and a closed $J$-holomorphic curve
$u : (\Sigma,j) \to (M,J)$ that has image in~$\uU$ and is immersed with
at most finitely many double points, all transverse and at distinct points
in the image.  Note that if $\dim_\RR M \ge 6$, this assumption means
$u$ is embedded.  

\rev{Choose a complex subbundle $N_u \subset u^*TM$ such that
$u^*TM = T_u \oplus N_u$, where $T_u := \im du$.  In the $4$-dimensional
case, our assumption about double points implies that we can also arrange
$$
(T_u)_z = (N_u)_\zeta \quad\text{ and }\quad (T_u)_\zeta = (N_u)_z
$$
whenever $u(z) = u(\zeta)$ with $z \ne \zeta$.  To construct
a suitable perturbation of $J$, fix $Y \in \Gamma(\overline{\End}_\CC(TM,J))$ 
with support in $\overline{\uU}$ and let
$$
\Phi := \1 + \frac{1}{2} J Y \in \Gamma(\End_\RR(TM)).
$$
We shall always assume that $Y$ is $C^0$-small enough for $\Phi$ to be
everywhere invertible, in which case
$$
J' := \Phi J \Phi^{-1}
$$
defines an almost complex structure that is close to $J$ and therefore
tame if $Y$ is sufficiently small.  
We shall make use of the splitting $u^*TM = T_u \oplus N_u$ and
restrict $Y$ by assuming that along $u$, it takes the block form
\begin{equation}
\label{eqn:Y}
Y(u(z)) = \begin{pmatrix}
0 & Y^{NT}(z) \\
0 & 0
\end{pmatrix} \in \overline{\End}_\CC(T_u \oplus N_u)
\quad\text{ for all $z \in \Sigma$},
\end{equation}
where $Y^{NT}$ is a (necessarily complex-antilinear)
bundle map $N_u \to T_u$.  Note that if $u$ has any double
points, then this condition requires $Y$ to vanish at the images
of those points.
Writing the tangent and normal parts of $J$ along $u$ as
$J^T : T_u \to T_u$ and $J^N : N_u \to N_u$ respectively, we now have
\begin{equation}
\label{eqn:Phitau}
\Phi(u(z)) = \begin{pmatrix}
\1 & \frac{1}{2} J^T(z) Y^{NT}(z) \\
0 & \1
\end{pmatrix}
\quad\text{ for all $z \in \Sigma$},
\end{equation}
and thus
\begin{equation}
\label{eqn:Jtau}
J'(u(z)) = \begin{pmatrix}
J^T(z) & Y^{NT}(z) \\
0   & J^N(z)
\end{pmatrix}
\quad\text{ for all $z \in \Sigma$.}
\end{equation}
This shows that $J'|_{T_u} = J|_{T_u}$, 
so $u$ is also $J'$-holomorpic.  
We can now define a $J'$-invariant normal bundle along $u$ by
$$
N_u' := \Phi(N_u) \subset u^*TM,
$$
so $\Phi|_{N_u} : (N_u,J) \to (N_u',J')$ is a
complex bundle isomorphism by construction.  Let
$\pi_{N'} : u^*TM = T_u \oplus N_u' \to N_u'$ 
denote the resulting normal projection, which gives rise to a perturbed normal
Cauchy-Riemann operator
$$
\mathbf{D}_u^{N'} = \left. \pi_{N'} \circ \mathbf{D}_u'\right|_{\Gamma(N_u')} :
\Gamma(N_u') \to \Omega^{0,1}(\Sigma,N_u'),
$$
where $\mathbf{D}_u'$ denotes the linearized Cauchy-Riemann operator for
$u$ as a $J'$-holomorphic curve.  Conjugating this with the bundle isomorphism
gives an operator
$$
\Phi^{-1} \circ \mathbf{D}_u^{N'} \circ \Phi :
\Gamma(N_u) \to \Omega^{0,1}(\Sigma,N_u).
$$

\begin{lemma}
\label{lemma:zerothOrder}
There exists a smooth bundle map $A : N_u \to \overline{\Hom}_\CC(T\Sigma,N_u)$
such that $\Phi^{-1} \circ \mathbf{D}_u^{N'} \circ \Phi = \mathbf{D}_u^N + A$.
For any connection $\nabla$ on~$TM$, $A$ is given by the formula
$$
A \eta = \pi_N \circ \nabla_\eta Y \circ Tu \circ j.
$$
\end{lemma}
\begin{remark}
Implicit in the above statement is that the expression on the right hand
side of the formula 
does not depend on the choice of connection.  This will follow from
a direct calculation in the proof, but the intuitive reason for it is that
under the block decomposition of $\nabla_\eta Y$ given by the splitting
$u^*TM = T_u \oplus N_u$, only the lower-left block (mapping $T_u$
to $N_u$) is relevant in the
above expression, while the corresponding block of $Y$ itself has been assumed
to vanish along~$u$.
\end{remark}
\begin{proof}[Proof of Lemma~\ref{lemma:zerothOrder}]
In terms of the splitting $u^*TM = T_u \oplus N_u$, the perturbed normal 
projection $u^*TM \to N_u'$ is given in block form by
$$
\pi_{N'} = \begin{pmatrix} 0 & \frac{1}{2} J^T Y^{NT} \\ 0 & \1 \end{pmatrix},
$$
so using \eqref{eqn:Phitau} to write
$\Phi^{-1}(u(z)) = 
\begin{pmatrix} \1 & -\frac{1}{2} J^T(z) Y^{NT}(z) \\ 0 & \1 \end{pmatrix}$,
we find
$$
\Phi^{-1} \circ \pi_{N'} = \pi_N.
$$
Recall now from \cite{Wendl:automatic}*{Lemma~3.8} that $\mathbf{D}_u$ 
maps sections of $T_u$ to $(0,1)$-forms valued in $u^*TM$ with vanishing
normal component.
The same applies to $\mathbf{D}_u'$, hence for $\eta \in \Gamma(N_u)$, we have
$\Phi\eta - \eta \in \Gamma(T_u)$ and thus
$$
\left(\Phi^{-1} \circ \mathbf{D}_u^{N'} \circ \Phi\right) \eta =
(\Phi^{-1} \circ \pi_{N'}) \mathbf{D}_u' (\Phi\eta) =
\pi_N (\mathbf{D}_u' \eta).
$$
To compute $\mathbf{D}_u' \eta$, choose any smooth $1$-parameter family of
maps $u_\rho : \Sigma \to M$ for $\rho \in (-\epsilon,\epsilon)$ with
$u_0 = u$ and $\p_\rho u_\rho|_{\rho=0} = \eta$.  Then for any connection
$\nabla$ on $TM$ and any holomorphic local coordinate system $(s,t)$ on some
open subset in $\Sigma$, the $(0,1)$-form $\mathbf{D}_u' \eta$ is given
locally by
\begin{equation}
\label{eqn:bigThing}
\begin{split}
(\mathbf{D}_u' \eta) \p_s &= \left.\nabla_\rho\left( \p_s u_\rho + 
J'(u_\rho)\, \p_t u_\rho\right)\right|_{\rho=0} \\
&= \left.\nabla_\rho\left( \p_s u_\rho + J(u_\rho)\, \p_t u_\rho +
\left[ J'(u_\rho) - J(u_\rho) \right] \p_t u_\rho \right)\right|_{\rho=0} \\
&= (\mathbf{D}_u \eta) \p_s + \left.\nabla_\rho \left(
\left[ J'(u_\rho) - J(u_\rho) \right] \p_t u_\rho \right)\right|_{\rho=0} \\
&= (\mathbf{D}_u \eta) \p_s + \left[\nabla_\eta(J'-J)\right] \p_t u +
\left[J'(u) - J(u)\right] \left.\nabla_\rho \p_t u_\rho\right|_{\rho=0}.
\end{split}
\end{equation}
By \eqref{eqn:Jtau}, the image of $J' - J$ has vanishing normal component
everywhere along~$u$, so the third term on the right hand side of
\eqref{eqn:bigThing} does not contribute to
$\pi_N(\mathbf{D}_u' \eta)$.  Removing the local coordinates, we thus
obtain the global expression
$$
\left(\Phi^{-1} \circ \mathbf{D}_u^{N'} \circ \Phi\right) \eta =
\mathbf{D}_u^N \eta + \pi_N \circ \nabla_\eta (J' - J) \circ Tu \circ j.
$$
To simplify the last term, observe that since $J' = \Phi J \Phi^{-1}$
with $\Phi = \1 + \frac{1}{2} JY$, $JY = -YJ$ and $J^2 = -\1$, we have
$$
(J'-J) \Phi = \Phi J - J\Phi = \left(\1 + \frac{1}{2} JY \right) J -
J \left(\1 + \frac{1}{2} JY \right) 
= \frac{1}{2} JYJ + \frac{1}{2} Y = Y,
$$
hence $J' - J = Y \Phi^{-1}$, and therefore
$$
\nabla_\eta (J' - J) = (\nabla_\eta Y) \Phi^{-1} + Y (\nabla_\eta \Phi^{-1}).
$$
Composing the second of these two terms with $Tu \circ j$ produces a
section with vanishing normal component due to \eqref{eqn:Y}, 
so it does not contribute.  In the
remaining expression, $\Phi^{-1}$ can be omitted since it acts trivially
on the tangential component, and this produces the formula that was claimed.
\end{proof}

\begin{proof}[Proof of Proposition~\ref{prop:nonlinear}]
Given a bundle map $B : N_u \to \overline{\Hom}_\CC(T\Sigma,N_u)$,
it will suffice to carry out the construction in Lemma~\ref{lemma:zerothOrder}
with $\Phi$ replaced by the $1$-parameter family of bundle isomorphisms
$\Phi_\tau = \1 + \frac{1}{2} \tau J Y$, as long as
$Y \in \Gamma(\overline{\End}_\CC(TM,J))$ can be chosen to match a
block expression of the form \eqref{eqn:Y} along $u$, with normal
derivative along $u$ satisfying
\begin{equation}
\label{eqn:normalDeriv}
\pi_N \circ \nabla_\eta Y \circ Tu \circ j = B \eta \quad\text{ for all $\eta \in N_u$}.
\end{equation}
Since $Tu \circ j : T\Sigma \to T_u$ is a complex-linear bundle isomorphism,
this is clearly possible if $u$ is embedded, as one can then assume
$Y = 0$ along $u$ and choose its normal derivative to satisfy
\eqref{eqn:normalDeriv}.  Note that if $J$ is $\omega$-compatible,
then $J_\tau$ will also be $\omega$-compatible if and only if $Y$ is
everywhere symmetric with respect to the metric $\omega(\cdot,J\cdot)$,
and this can also be achieved in the absence of double points since
\eqref{eqn:normalDeriv} only constrains the lower-left block of
$\nabla_\eta Y$ with respect to the splitting $u^*TM = T_u \oplus N_u$.

We must be a bit more careful 
if $\dim_\RR M = 4$ and $u$ has double points.  Assume
$u(z) = u(\zeta) = p$, with $(T_u)_z = (N_u)_\zeta$ and vice versa.
We can choose local coordinates $(z_1,z_2) \in \CC^2$ near $p$ that
identify $p$ with the origin, while the images of $u$ near $z$ and $\zeta$
are identified with subsets of $\CC \times \{0\}$ and $\{0\} \times \CC$
respectively.  In this neighborhood, choose a complex local trivialization of
$(TM,J)$ identifying the normal subspaces along $\CC \times \{0\}$ with
$\{0\} \oplus \CC$ and those along $\{0\} \times \CC$ with $\CC \oplus \{0\}$,
and let $\nabla$ be
the trivial connection with respect to this trivialization.  We claim that
in this trivialization near~$p$, a suitable $Y$ can be written in the form
$$
Y(z_1,z_2) = \begin{pmatrix}
0 & Y_{12}(z_1,z_2) \\
Y_{21}(z_1,z_2) & 0
\end{pmatrix}
$$
for some functions $Y_{12}$ and $Y_{21}$ valued in $\overline{\End}_\CC(\CC)$.
Indeed, the condition \eqref{eqn:Y} now becomes
\begin{equation*}
\begin{split}
Y_{21}(z_1,0) = 0 \quad &\text{ for all $z_1$},\\
Y_{12}(0,z_2) = 0 \quad &\text{ for all $z_2$},
\end{split}
\end{equation*}
while \eqref{eqn:normalDeriv} specifies the normal derivatives of
$Y_{21}$ along $\CC \times \{0\}$ and $Y_{12}$ along $\{0\} \times \CC$.
After choosing $Y_{12}$ and $Y_{21}$ to satisfy these conditions,
we can then also arrange
$Y_{21}(0,z_2) = Y_{12}(z_1,0) = 0$ for all $z_1,z_2$ ouside some small
neighborhood of~$0$, hence $Y$ vanishes along $u$ outside a neighborhood
of~$p$, and the previous argument for the embedded case can then be used
to extend $Y$ globally.
\end{proof}

\begin{remark}
\label{remark:whyNotCompatible}
If $J$ is $\omega$-compatible and $u$ has double points, then the above
proof fails to provide $\omega$-compatible perturbations~$J_\tau$:
in a neighborhood of a double point, the last step in the
construction  generally forces the upper-right block of \eqref{eqn:Y} to take
nonzero values, thus violating the symmetry condition required for
$\omega$-compatibility.
This is why the statement of Theorem~\ref{thm:super} in the 
compatible case is limited to embedded curves.
\end{remark}
}

\section{Symmetric bundle isomorphisms}
\label{sec:symmetry}

We now state and prove a result that implies Proposition~\ref{prop:symmetry}.  

\begin{prop}
\label{prop:symmetryGeneral}
Suppose $E \to \Sigma$ is a Hermitian vector bundle,
let $\langle\ ,\ \rangle_\RR$ denote the real part of its bundle metric,
and suppose $L \to \Sigma$ is a complex line bundle.  Then every homotopy 
class of complex-antilinear bundle isomorphisms 
$B : E \to \overline{\Hom}_\CC(L,E)$ contains one that satisfies the
condition
$$
\langle \xi , B\eta(X) \rangle_\RR = \langle B\xi(X) , \eta \rangle_\RR
\quad\text{ for all $(X,\xi,\eta) \in L \oplus E \oplus E$}.
$$
\end{prop}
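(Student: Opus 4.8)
The plan is to recast the asserted existence as a \emph{compression} problem for sections of a fibre bundle over the surface $\Sigma$, and to settle it by obstruction theory; the whole point of doing something topological is that the naive device — replacing a bilinear form by its symmetric part — fails, since the symmetric part of a nondegenerate $\CC$-bilinear form need not be nondegenerate (a skew form symmetrises to zero). First I would use the Hermitian metric to turn $B$ into an honest bilinear form: to a complex-antilinear bundle isomorphism $B : E \to \overline{\Hom}_\CC(L,E)$ associate
$$\Phi_B(\xi,\eta,X) := \langle \xi, B\eta(X)\rangle, \qquad (X,\xi,\eta)\in L\oplus E\oplus E .$$
This is $\CC$-linear in $\xi$ because $\langle\,,\rangle$ is; it is $\CC$-linear in $\eta$ because the antilinearity of $B$ cancels the antilinearity of $\langle\,,\rangle$ in its second slot; and it is $\CC$-linear in $X$ for the same reason, since $B\eta\in\overline{\Hom}_\CC(L,E)$ is antilinear. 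So $\Phi_B$ is a $\CC$-trilinear form, i.e.\ a section of $E^*\otimes_\CC E^*\otimes_\CC L^*$, and $B\mapsto\Phi_B$ is a homeomorphism from the space of complex-antilinear bundle isomorphisms $E\to\overline{\Hom}_\CC(L,E)$ onto the space of sections of the open subbundle $\mathcal N\subset E^*\otimes_\CC E^*\otimes_\CC L^*$ whose fibre over $z$ consists of those $\Phi_0$ for which $\Phi_0(\cdot,\cdot,X)$ is a nondegenerate bilinear form on $E_z$ whenever $X\neq 0$. Since $\Phi_B$ is $\CC$-bilinear in $(\xi,\eta)$, a form is symmetric in the real sense of the displayed condition if and only if it is $\CC$-symmetric in $(\xi,\eta)$, and a one-line manipulation using $\langle B\xi(X),\eta\rangle=\overline{\langle\eta,B\xi(X)\rangle}$ identifies the displayed condition on $B$ with the requirement that $\Phi_B$ be a section of $\mathcal N^{\mathrm{sym}}:=\mathcal N\cap\big(\operatorname{Sym}^2_\CC E^*\otimes_\CC L^*\big)$. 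Thus the proposition becomes: every section of $\mathcal N$ is homotopic, through sections of $\mathcal N$, to a section of $\mathcal N^{\mathrm{sym}}$.

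Second I would run obstruction theory. Both $\mathcal N$ and $\mathcal N^{\mathrm{sym}}$ are fibre bundles over $\Sigma$, and over a chart trivialising $E$ and $L$ the pair $(\mathcal N,\mathcal N^{\mathrm{sym}})$ is a product with the pair of fibres $(\mathcal B,\mathcal B^{\mathrm{sym}})$, where $\mathcal B$ is the space of nondegenerate $\CC$-bilinear forms on $\CC^r$, $r=\mathrm{rk}_\CC E$, and $\mathcal B^{\mathrm{sym}}\subset\mathcal B$ the symmetric ones. Because $\Sigma$ is a closed surface, hence a $2$-dimensional CW complex, the only potential obstructions to compressing a section of $\mathcal N$ into $\mathcal N^{\mathrm{sym}}$ vanish once the fibre inclusion $\mathcal B^{\mathrm{sym}}\hookrightarrow\mathcal B$ is $2$-connected — i.e.\ once it induces an isomorphism on $\pi_0$ and $\pi_1$ and a surjection on $\pi_2$ — the genuine cohomological obstruction appearing in $H^2\big(\Sigma;\pi_2(\mathcal B,\mathcal B^{\mathrm{sym}})\big)$ with local coefficients.

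Third, the connectivity computation. The Gram matrix identifies $\mathcal B$ with $\GL(r,\CC)\simeq\U(r)$, which is connected with $\pi_1\cong\ZZ$ (detected by the determinant) and $\pi_2=0$, and identifies $\mathcal B^{\mathrm{sym}}$ with $\GL(r,\CC)/\Ortho(r,\CC)$, since every nondegenerate symmetric $\CC$-bilinear form is equivalent to the standard one, with stabiliser $\Ortho(r,\CC)$. By Takagi factorisation $\mathcal B^{\mathrm{sym}}$ deformation retracts onto the symmetric unitary matrices $\U(r)/\Ortho(r)$ — the Lagrangian Grassmannian — so it is connected with $\pi_1\cong\ZZ$ (the Maslov index). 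In the fibration $\Ortho(r,\CC)\to\GL(r,\CC)\xrightarrow{q}\mathcal B^{\mathrm{sym}}$ the map $\pi_1(\Ortho(r,\CC))\cong\pi_1(\Ortho(r,\RR))=\pi_1(\SO(r))\to\pi_1(\GL(r,\CC))$ vanishes, because a loop in $\Ortho(r,\RR)$ lies in $\SO(r)$ and hence has constant determinant; so $q$ is injective on $\pi_1$ with cokernel $\pi_0(\Ortho(r,\CC))=\ZZ/2$, forcing $q_*\colon\ZZ\to\ZZ$ to be multiplication by $\pm 2$. On the other hand the composite $\GL(r,\CC)\xrightarrow{q}\mathcal B^{\mathrm{sym}}\hookrightarrow\GL(r,\CC)$ is $A\mapsto AA^{\transpose}$, which squares the determinant and is therefore multiplication by $2$ on $\pi_1$; combining the two, the inclusion $\mathcal B^{\mathrm{sym}}\hookrightarrow\mathcal B$ induces multiplication by $\pm 1$ on $\pi_1$, an isomorphism, and it is vacuously onto on $\pi_2=0$. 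Hence the inclusion is $2$-connected, the compression exists, and the proposition follows. (For $r=1$ one has $\mathcal B^{\mathrm{sym}}=\mathcal B=\CC^*$ and there is nothing to do, consistent with the remark that the condition is vacuous for a line bundle.)

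The step I expect to be the main obstacle is the $\pi_1$ computation in the last paragraph: since there is no cheap linear homotopy realising the compression, one genuinely needs the topological input $\pi_1(\U(r)/\Ortho(r))\cong\ZZ$ together with the determinant bookkeeping to see that enlarging from symmetric to arbitrary nondegenerate forms neither creates nor kills any loop. By comparison, setting up the bundle pair correctly — in particular checking that the two complex-antilinear structures combine into a genuine $\CC$-bilinear form and that the symmetry hypothesis corresponds exactly to landing in $\operatorname{Sym}^2$ — and invoking the obstruction-theoretic compression criterion over a $2$-complex are routine.
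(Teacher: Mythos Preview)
Your proof is correct and takes essentially the same approach as the paper: both reduce via obstruction theory over the $2$-complex $\Sigma$ to showing $\pi_k\big(\GL(m,\CC),\GL^S(m,\CC)\big)=0$ for $k=1,2$, your bilinear-form setup differing from the paper's antilinear-automorphism setup only by composition with complex conjugation. Your $\pi_1$ computation---identifying $\mathcal B^{\mathrm{sym}}$ with the Lagrangian Grassmannian via Takagi and then reading off the inclusion on $\pi_1$ from the determinant of $A\mapsto AA^{\transpose}$---is a mild repackaging of the paper's explicit-generator argument through the same fibration $\Ortho(m,\CC)\to\GL(m,\CC)\to\GL^S(m,\CC)$.
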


Observe first that a choice of {complex-antilinear isomorphism}
$B : E \to \overline{\Hom}_\CC(L,E)$ is equivalent via the correspondence
$B\eta(X) = \widehat{B}X(\eta)$ to a choice of
complex-antilinear bundle map
$$
\widehat{B} : L \to \overline{\End}_\CC(E)
$$
with the property that for all nonzero $X \in L$,
$\widehat{B}(X)$ is invertible.  Proposition~\ref{prop:symmetryGeneral} is then 
equivalent to showing that 
every homotopy class of bundle maps $\widehat{B}$ with the above property
contains one for which $\widehat{B}(X)$ is always symmetric.
This is clearly true for the restriction of $\widehat{B}$ to the 
$0$-skeleton of $\Sigma$, since the space of antilinear isomorphisms
on any complex vector space is connected and contains one that is
symmetric.  Extending this to the $1$-skeleton 
and then the $2$-skeleton of $\Sigma$ is possible due to 
Proposition~\ref{prop:obstruction} below.

Identify $\CC^m$ with $\RR^{2m}$ so that $\End_\CC(\CC^m)$ is regarded as
the real subspace of $\End_\RR(\RR^{2m}) = \End_\RR(\CC^m)$ consisting of
linear maps that commute with the standard complex structure
$i \in \GL(2m,\RR)$.  We then denote
\begin{equation*}
\begin{split}
\overline{\Aut}_\CC(\CC^m) &:= \overline{\End}_\CC(\CC^m) \cap \GL(2m,\RR),\\
\overline{\Aut}_\CC^S(\CC^m) &:= \left\{ A \in \overline{\Aut}_\CC(\CC^m)\ |\ 
A = A^T \right\},
\end{split}
\end{equation*}
where $A^T$ means the usual transpose of real $2m$-by-$2m$ matrices.

\begin{prop}
\label{prop:obstruction}
We have
$$
\pi_1\left(\overline{\Aut}_\CC(\CC^m),\overline{\Aut}_\CC^S(\CC^m)\right) =
\pi_2\left(\overline{\Aut}_\CC(\CC^m),\overline{\Aut}_\CC^S(\CC^m)\right) = 0.
$$
\end{prop}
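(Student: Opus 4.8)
The plan is to translate the statement into a question about the pair $(\GL(m,\CC),S)$, where $S\subset\GL(m,\CC)$ denotes the subspace of invertible complex-symmetric matrices ($B^{\transpose}=B$), and then to run the long exact homotopy sequence of that pair. The dictionary is obtained by composing an antilinear automorphism with the standard conjugation $c(\eta)=\bar\eta$: the assignment $A\mapsto B:=A\circ c$ identifies $\overline{\Aut}_\CC(\CC^m)$ with $\GL(m,\CC)$, and a short computation with the real transpose (using $c^{\transpose}=c$ and the fact that the real transpose of a $\CC$-linear operator is its conjugate transpose) shows that $A=A^{\transpose}$ holds if and only if $B=B^{\transpose}$. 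Hence this identification carries $\overline{\Aut}_\CC^S(\CC^m)$ onto $S$, and it suffices to prove that $\pi_1(\GL(m,\CC),S)$ and $\pi_2(\GL(m,\CC),S)$ vanish.

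Next I would invoke the long exact sequence of the pair. The space $S$ is connected: by the classification of nondegenerate complex symmetric bilinear forms, the congruence action $B\mapsto PBP^{\transpose}$ of $\GL(m,\CC)$ on $S$ is transitive with stabilizer of the identity equal to $\Ortho(m,\CC)$, so $S\cong\GL(m,\CC)/\Ortho(m,\CC)$, a continuous image of the connected group $\GL(m,\CC)$. Since moreover $\pi_2(\GL(m,\CC))=\pi_2(\U(m))=0$, inspection of the exact sequence shows that the vanishing of both relative groups is equivalent to the single assertion that the inclusion $S\hookrightarrow\GL(m,\CC)$ induces an isomorphism on $\pi_1$.

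To prove that last assertion, I would exhibit both spaces as total spaces of fibrations over $\CC^{*}$ given by the determinant, compatibly with the inclusion. On $\GL(m,\CC)$ this is the bundle $\SL(m,\CC)\to\GL(m,\CC)\xrightarrow{\det}\CC^{*}$; as $\SL(m,\CC)$ is simply connected, $\det$ induces an isomorphism $\pi_1(\GL(m,\CC))\xrightarrow{\ \sim\ }\pi_1(\CC^{*})=\ZZ$. Restricted to $S$, the function $\det$ corresponds, under $S\cong\GL(m,\CC)/\Ortho(m,\CC)$ via $P\mapsto PP^{\transpose}$, to the map $[P]\mapsto(\det P)^2$, that is, to the fibre bundle $\GL(m,\CC)/\Ortho(m,\CC)\to\GL(m,\CC)/\bigl(\SL(m,\CC)\cdot\Ortho(m,\CC)\bigr)\cong\CC^{*}$ with fibre $\SL(m,\CC)\Ortho(m,\CC)/\Ortho(m,\CC)\cong\SL(m,\CC)/\SO(m,\CC)$. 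From the fibration $\SO(m,\CC)\to\SL(m,\CC)\to\SL(m,\CC)/\SO(m,\CC)$, together with $\pi_1(\SL(m,\CC))=0$ and the connectedness of $\SO(m,\CC)$, this fibre is simply connected, so $\det|_S$ likewise induces an isomorphism $\pi_1(S)\xrightarrow{\ \sim\ }\pi_1(\CC^{*})=\ZZ$. Since $\det|_S$ is literally the restriction of $\det$, the square relating the two fibrations commutes, the inclusion $S\hookrightarrow\GL(m,\CC)$ intertwines the two isomorphisms, and is therefore itself an isomorphism on $\pi_1$, which completes the proof.

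The step I expect to require the most care is precisely this last one. The ``obvious'' fibration $\Ortho(m,\CC)\to\GL(m,\CC)\to S$ only produces a short exact sequence $0\to\ZZ\to\pi_1(S)\to\ZZ/2\to 0$ whose extension type it leaves undetermined, and the proposition is in fact \emph{false} for the other possible extension $\pi_1(S)\cong\ZZ\oplus\ZZ/2$ (in that case the inclusion would be multiplication by $2$ on the free part and would kill the torsion). Routing instead through the determinant fibration, whose fibre $\SL(m,\CC)/\SO(m,\CC)$ is simply connected, is what pins down $\pi_1(S)\cong\ZZ$ with the inclusion acting as the identity. The degenerate case $m=1$, where $S=\GL(1,\CC)=\CC^{*}$ and all relative groups vanish tautologically, needs no separate treatment, as the argument above is uniform in~$m$.
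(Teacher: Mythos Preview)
Your proof is correct.  The overall architecture matches the paper exactly: both translate via composition with complex conjugation to the pair $(\GL(m,\CC),\GL^S(m,\CC))$, both use the long exact sequence of that pair together with $\pi_0(\GL^S(m,\CC))=0$ and $\pi_2(\GL(m,\CC))=0$ to reduce everything to showing that the inclusion $\GL^S(m,\CC)\hookrightarrow\GL(m,\CC)$ induces an isomorphism on~$\pi_1$.  The difference lies entirely in how that last isomorphism is established.  The paper works with the fibration $\Ortho(m,\CC)\hookrightarrow\GL(m,\CC)\to\GL(m,\CC)/\Ortho(m,\CC)\cong\GL^S(m,\CC)$ directly, writes down an explicit path generating $\pi_1(\GL(m,\CC)/\Ortho(m,\CC))$, and then computes by hand where this generator goes under $A\mapsto A^TA$ followed by inclusion.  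You instead route both $\GL^S(m,\CC)$ and $\GL(m,\CC)$ through the determinant to~$\CC^*$: since the fibres $\SL(m,\CC)/\SO(m,\CC)$ and $\SL(m,\CC)$ are each simply connected, $\det$ induces $\pi_1$-isomorphisms on both spaces, and the commuting square forces the inclusion to be a $\pi_1$-isomorphism without ever naming a generator.  Your route is more conceptual and somewhat shorter; the paper's route is more hands-on but entirely self-contained, in particular proving the connectedness of $\SO(m,\CC)$ that you invoke as a known fact.  Your closing remark that the ``obvious'' fibration alone leaves an unresolved extension $0\to\ZZ\to\pi_1(S)\to\ZZ/2\to0$ is exactly the issue the paper's explicit generator computation is designed to settle.
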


The proof of the proposition occupies the remainder of this section.
Observe first that composition with the real-linear isomorphism
$$
\CC^m \to \CC^m : v \mapsto \bar{v}
$$
identifies $\overline{\Aut}_\CC(\CC^m)$ with
$\GL(m,\CC) \subset \GL(2m,\RR)$ and $\overline{\Aut}_\CC^S(\CC^m)$ with
$$
\GL^S(m,\CC) := \left\{ A \in \GL(m,\CC)\ |\ A = A^T \right\},
$$
where in the latter case $A^T$ denotes the transpose (not the adjoint!)
of the $m$-by-$m$ complex matrix~$A$, i.e.~$A^T = \overline{A}^\dagger$.
The proposition is therefore equivalent to the computation
\begin{equation}
\label{eqn:obstruction}
\pi_1\left(\GL(m,\CC),\GL^S(m,\CC)\right) = 
\pi_2\left(\GL(m,\CC),\GL^S(m,\CC)\right) = 0.
\end{equation}
We prove this in five steps.

\textsl{Step~1}.
Consider the map
\begin{equation}
\label{eqn:ATA}
Q : \GL(m,\CC) / \Ortho(m,\CC) \to \GL^S(m,\CC) : A \mapsto A^T A,
\end{equation}
where $\Ortho(m,\CC)$ denotes the complex orthogonal group
$\{ A \in \GL(m,\CC)\ |\ A^T A = \1 \}$.
We claim that $Q$ is a bijection.  Injectivity is easy to check;
surjectivity follows from the fact that every $A \in \GL^S(m,\CC)$ defines
a symmetric nondegenerate complex bilinear form
$$
(v,w) \mapsto v^T A w,
$$
and all such forms are equivalent up to a choice of basis.
Since $\GL(m,\CC)$ is connected, it follows that $\GL^S(m,\CC)$ is connected.

\textsl{Step~2}.
We claim that for all $m \in \NN$, $\Ortho(m,\CC)$ has exactly two
connected components.  It is clear that there are at least two,
as every $A \in \Ortho(m,\CC)$ has $\det A = \pm 1$.  It suffices
therefore to prove that $\SO(m,\CC) := \{ A \in \Ortho(m,\CC)\ |\ \det A = 1 \}$
is connected.  This is true for $m=1$ since $\SO(1,\CC)$ is the trivial group.
The claim then follows by induction using the fibration
$$
\SO(m-1,\CC) \hookrightarrow \SO(m,\CC) \stackrel{\pi}{\to} H^{m-1},
$$
where $H^{m-1} := \{ v \in \CC^m\ |\ v^T v = 1 \}$ and $\pi(A)$ is defined as
the first column of~$A$.  The fact that $\pi$ is surjective can be proved
using the same argument that is used in diagonalizing quadratic forms:
it reduces to the fact that any given $v_1 \in H^{m-1}$ can be extended to a
complex basis $v_1,\ldots,v_m \in H^{m-1}$ of $\CC^m$ such that
$v_i^T v_j = \delta_{ij}$.

\textsl{Step~3}.
We claim that $\pi_1(\GL(m,\CC) / \Ortho(m,\CC)) \cong \ZZ$ is generated by
the projection to $\GL(m,\CC) / \Ortho(m,\CC)$ of the path
$$
\gamma : [0,1] \to \GL(m,\CC) : t \mapsto
\begin{pmatrix}
e^{\pi i t} &   &        &   \\
            & 1 &        &   \\
	    &   & \ddots &   \\
	    &   &        & 1
\end{pmatrix}.
$$
To see this, consider the long exact sequence of the fibration
$\Ortho(m,\CC) \stackrel{\iota}{\hookrightarrow} \GL(m,\CC) 
\stackrel{p}{\to} GL(m,\CC) / \Ortho(m,\CC)$:
\begin{equation*}
\begin{split}
\ldots \longrightarrow \pi_1(\GL(m,\CC)) &\stackrel{p_*}{\longrightarrow} 
\pi_1(\GL(m,\CC) / \Ortho(m,\CC)) \stackrel{\p}{\longrightarrow} \\
&\qquad \pi_0(\Ortho(m,\CC)) \longrightarrow
\pi_0(\GL(m,\CC)) = 0.
\end{split}
\end{equation*}
Any loop in $\GL(m,\CC) / \Ortho(m,\CC)$ can be represented as a path
$\beta : [0,1] \to \GL(m,\CC)$ with $\beta(0) = \1$ and
$\beta(1) \in \Ortho(m,\CC)$, and the map $\p$ can then be written as
$$
\p[\beta] = \det \beta(1) \in \{1,-1\} = \pi_0(\Ortho(m,\CC)),
$$
applying the result of Step~2.
Since $\ker \p = \im p_*$, any such path $\beta$ with $\det\beta(1) = 1$
is equivalent in $\pi_1(\GL(m,\CC) / \Ortho(m,\CC))$ to a loop in
$\GL(m,\CC)$, and using the standard computation of $\pi_1(\GL(m,\CC)) = 
\pi_1(\U(m))$, any such loop is homotopic to
$$
S^1 \to \GL(m,\CC) : t \mapsto
\begin{pmatrix}
e^{2\pi k i t} &   &        &   \\
             & 1 &        &   \\
 	     &   & \ddots &   \\
	     &   &        & 1
\end{pmatrix}
$$
for some $k \in \ZZ$.
Thus any such element of $\pi_1(\GL(m,\CC) / \Ortho(m,\CC))$ is an even
power of~$\gamma$.  If on the other hand $\det \beta(1) = -1$, then we
can concatenate $\beta$ with the loop $t \mapsto [\beta(1) \gamma(t)]$
in $\GL(m,\CC) / \Ortho(m,\CC)$, whose determinant at $t=1$ is positive,
implying that $\beta \cdot \gamma \in \pi_1(\GL(m,\CC) / \Ortho(m,\CC))$
is an even power of $\gamma$, so this proves the claim.

\textsl{Step~4}.
We claim that the composition of the map $Q$ in \eqref{eqn:ATA} with the
inclusion $\GL^S(m,\CC) \hookrightarrow \GL(m,\CC)$ induces an isomorphism
$$
\pi_1\left( \GL(m,\CC) / \Ortho(m,\CC) \right) = \pi_1(\GL(m,\CC)).
$$
This {follows by} computing the action of this map on the
generator of $\pi_1\left( \GL(m,\CC) / \Ortho(m,\CC) \right)$ as
described in Step~3.

\textsl{Step~5}.
Consider the homotopy exact sequence for $(\GL(m,\CC),\Ortho(m,\CC))$:
\begin{equation*}
\begin{split}
\ldots \longrightarrow &\pi_2(\GL(m,\CC)) \stackrel{\alpha_2}{\longrightarrow} \pi_2\left(\GL(m,\CC),\GL^S(m,\CC)\right)
\stackrel{\p_2}{\longrightarrow} \\
& \pi_1\left(\GL^S(m,\CC)\right) \stackrel{\iota_*}{\longrightarrow} \pi_1(\GL(m,\CC))
\stackrel{\alpha_1}{\longrightarrow} \pi_1\left(\GL(m,\CC) , \GL^S(m,\CC)\right) \stackrel{\p_1}{\longrightarrow}\\
& \pi_0\left(\GL^S(m,\CC)\right) = 0.
\end{split}
\end{equation*}
We showed in Step~4 that $\iota_*$ is an isomorphism, thus $\alpha_1 = 0$,
implying that $\p_1$ is injective and thus
$$
\pi_1\left(\GL(m,\CC) , \GL^S(m,\CC)\right) = 0.
$$
Moreover, the injectivity of $\iota_*$ implies $\p_2 = 0$, so $\alpha_2$
is surjective and, since $\pi_2(\GL(m,\CC)) = \pi_2(\U(m)) = 0$,
$$
\pi_2\left(\GL(m,\CC) , \GL^S(m,\CC)\right) = 0.
$$
This completes the proof of Proposition~\ref{prop:obstruction} and hence,
by standard obstruction theory as in \cite{Steenrod},
Proposition~\ref{prop:symmetryGeneral}.

\rev{

\section{A Weitzenb\"ock formula for antilinear perturbations}
\label{sec:Weitzenbock}

In preparation for the proof of Proposition~\ref{prop:linear}, we now
explain a generalization of the Weitzenb\"ock formula that was derived
in \S\ref{sec:Taubes} for trivial bundles on the torus.  

Throughout this
section, we assume $(\Sigma,j)$ is a closed connected Riemann surface
and $(E,J) \to (\Sigma,j)$ is a complex vector bundle of rank $m \in \NN$
with Hermitian structure $\langle\ ,\ \rangle_E$.  Fix also a
$j$-invariant Riemannian metric on $\Sigma$,
which is the real part of a Hermitian structure $\langle\ ,\ \rangle_\Sigma$
on~$T\Sigma$, and denote the induced volume form on $\Sigma$ by~$\vol$.  
This choice determines a complex-linear bundle 
isomorphism\footnote{We are using the convention that Hermitian
bundle metrics are antilinear in the first and linear in the second argument.}
\begin{equation}
\label{eqn:TSigma}
T\Sigma \to \Lambda^{0,1}T^*\Sigma : X \mapsto X^{0,1} := \langle \cdot,X \rangle_\Sigma
\end{equation}
and consequently a global trivialization
\begin{equation}
\label{eqn:globalTriv}
\Lambda^{1,0}T^*\Sigma \otimes \Lambda^{0,1}T^*\Sigma \to \CC :
\lambda \otimes X^{0,1} \mapsto \lambda(X).
\end{equation}
Moreover, the rank~$m$ complex bundle
$$
F := \Lambda^{0,1}T^*\Sigma \otimes E
$$
inherits from $\langle\ ,\ \rangle_\Sigma$ and $\langle\ ,\ \rangle_E$ a
Hermitian bundle metric $\langle\ ,\ \rangle_F$, and we shall define 
real-valued $L^2$-pairings for sections of $E$ and $F$ by
\begin{equation*}
\begin{split}
\langle \eta,\xi \rangle_{L^2(E)} &:= \Re \int_{\Sigma} 
\langle \eta,\xi \rangle_E \, \vol, \quad \text{ for } \quad
\eta,\xi \in \Gamma(E), \\
\langle \alpha,\lambda \rangle_{L^2(F)} &:= \Re \int_{\Sigma} 
\langle \alpha,\lambda \rangle_F \, \vol, \quad \text{ for } \quad
\alpha,\lambda \in \Gamma(F).
\end{split}
\end{equation*}
Given any real-linear map
$\mathbf{D} : \Gamma(E) \to \Gamma(F)$, the \defin{formal adjoint}
$\mathbf{D}^* : \Gamma(F) \to \Gamma(E)$ is defined via the relation
$$
\langle \lambda , \mathbf{D} \eta \rangle_{L^2(F)} =
\langle \mathbf{D}^* \lambda , \eta \rangle_{L^2(E)} \quad \text{ for all }\quad
\eta \in \Gamma(E),\ \lambda \in \Gamma(F).
$$
Recall that $\mathbf{D} : \Gamma(E) \to \Omega^{0,1}(\Sigma,E) = \Gamma(F)$ 
is called a \defin{Cauchy-Riemann type} operator on $E$
if it satisfies the Leibniz rule
$$
\mathbf{D}(f\eta) = (\dbar f) \eta + f \, \mathbf{D}\eta \quad \text{ for all }
\quad f \in C^\infty(\Sigma,\RR),\ \eta \in \Gamma(E),
$$
where $\dbar f := df + i\, df \circ j$.
Similarly, we will say that $\mathbf{D} : E \to \Omega^{1,0}(\Sigma,E) 
= \Gamma(\Lambda^{1,0}T^*\Sigma \otimes E)$ is an
\defin{anti-Cauchy-Riemann type} operator on $E$ if it satisfies
\begin{equation}
\label{eqn:antiLeibniz}
\mathbf{D}(f\eta) = (\p f) \eta + f \, \mathbf{D}\eta \quad \text{ for all }
\quad f \in C^\infty(\Sigma,\RR),\ \eta \in \Gamma(E),
\end{equation}
with $\p f := df - i\, df \circ j$.  If $\mathbf{D}$ is of Cauchy-Riemann type,
then it is well known that $\mathbf{D}^*$ is conjugate via real-linear
bundle isomorphisms to another Cauchy-Riemann type operator; more
precisely, the natural complex bundle isomorphism
\begin{equation}
\label{eqn:bndlIso}
\Lambda^{1,0}T^*\Sigma \otimes F = \Lambda^{1,0}T^*\Sigma \otimes
\Lambda^{0,1}T^*\Sigma \otimes E = E
\end{equation}
defined via \eqref{eqn:globalTriv} identifies $-\mathbf{D}^*$ with
an anti-Cauchy-Riemann type operator
$$
-\mathbf{D}^* : \Gamma(F) \to \Gamma(E) =
\Gamma(\Lambda^{1,0}T^*\Sigma \otimes F) = \Omega^{1,0}(\Sigma,F).
$$

\begin{prop}
\label{prop:Weitzenbock}
Suppose $\mathbf{D} : \Gamma(E) \to \Gamma(F)$ is a real-linear Cauchy-Riemann 
type operator, $B : E \to F$ is a complex-antilinear bundle map satisfying
the symmetry condition
\begin{equation}
\label{eqn:symmetryCondition}
\Re \langle \eta, B\xi(X) \rangle_E = \Re \langle B\eta(X),\xi\rangle_E
\quad \text{ for all } \quad
(X,\eta,\xi) \in T\Sigma \oplus E \oplus E,
\end{equation}
and $\mathbf{D}_B := \mathbf{D} + B$.  Then the complex vector 
bundle\footnote{We define the complex structure on $\Hom_\RR(E,F)$ and
its subbundles such as $\overline{\Hom}_\CC(E,F)$ via 
the complex structure of $F$, i.e.~$B \mapsto J \circ B$.}
$\overline{\Hom}_\CC(E,F)$ admits a real-linear anti-Cauchy-Riemann type
operator $\p_H$ such that for all $\eta \in \Gamma(E)$,
$$
\mathbf{D}_B^*\mathbf{D}_B \eta =
\mathbf{D}^*\mathbf{D} \eta + B^*B \eta - (\p_H B) \eta.
$$
\end{prop}
\begin{remark}
In the above formula, the product of
$\p_H B \in \Omega^{1,0}(\Sigma,\overline{\Hom}_\CC(E,F))$
with $\eta \in \Gamma(E)$ is interpreted as a section of $E$ via the
product pairing
$$
\left( \Lambda^{1,0}T^*\Sigma \otimes \overline{\Hom}_\CC(E,F) \right)
\otimes E \to \Lambda^{1,0}T^*\Sigma \otimes F
$$
and the isomorphism \eqref{eqn:bndlIso}.
\end{remark}

The proof of Proposition~\ref{prop:Weitzenbock} will rely mainly on a few
basic observations about anti-Cauchy-Riemann operators.
Recall that a complex-valued
function $f$ on an open subset of $\Sigma$ is called \defin{antiholomorphic} 
if it satisfies $\p f \equiv 0$.  The composition of a holomorphic and an
antiholomorphic function is antiholomorphic, and the product of two
antiholomorphic functions is also antiholomorphic, thus it makes sense
to speak of \emph{antiholomorphic vector bundles} over~$\Sigma$.
Anti-Cauchy-Riemann type operators have several properties analogous
to Cauchy-Riemann type operators, notably:
\begin{enumerate}
\item The difference between two anti-Cauchy-Riemann type operators on the
same bundle is a zeroth-order operator.
\item The complex-linear part of any real-linear anti-Cauchy-Riemann type
operator is also an anti-Cauchy-Riemann type operator.
\item Every antiholomorphic vector bundle carries a natural complex-linear
anti-Cauchy-Riemann operator that annihilates local antiholomorphic sections,
and conversely, every complex-linear anti-Cauchy-Riemann operator on
$(E,J) \to (\Sigma,j)$ induces an
antiholomorphic bundle structure in this way.
\end{enumerate}
The first two statements are easy consequences of the Leibniz rule
\eqref{eqn:antiLeibniz}.  The third is nontrivial, but is equivalent to the
corresponding fact about Cauchy-Riemann type operators and holomorphic
bundles over Riemann surfaces.

\begin{lemma}
\label{lemma:induced}
Suppose $E_1$ and $E_2$ are complex vector bundles over $(\Sigma,j)$ endowed with
anti-Cauchy-Riemann type operators $\mathbf{D}_1$ and $\mathbf{D}_2$
respectively.  Then $\Hom_\CC(E_1,E_2)$ admits an anti-Cauchy-Riemann type
operator $\mathbf{D}_{12}$ such that for 
all $\Phi \in \Gamma(\Hom_\CC(E_1,E_2))$ and $\eta \in \Gamma(E_1)$,
$$
\mathbf{D}_2 (\Phi \eta) = (\mathbf{D}_{12} \Phi) \eta + \Phi (\mathbf{D}_1 \eta).
$$
\end{lemma}
\begin{proof}
Write $\mathbf{D}_1 = \mathbf{D}_1^\CC + A$
and $\mathbf{D}_2 = \mathbf{D}_2^\CC + B$, where $\mathbf{D}_1^\CC$ and
$\mathbf{D}_2^\CC$ are complex-linear anti-Cauchy-Riemann type operators
(e.g.~the complex-linear parts of $\mathbf{D}_1$ and $\mathbf{D}_2$ 
respectively), so 
$$
A : E_1 \to \Lambda^{1,0}T^*\Sigma \otimes E_1 \quad \text{ and } \quad
B : E_2 \to \Lambda^{1,0}T^*\Sigma \otimes E_2
$$ 
are zeroth-order terms.  Then $\mathbf{D}_1^\CC$ and $\mathbf{D}_2^\CC$ induce
antiholomorphic bundle structures on $E_1$ and $E_2$, and $\Hom_\CC(E_1,E_2)$
therefore inherits local trivializations with transition maps that are products
of antiholomorphic functions, giving rise to an antiholomorphic structure
and a corresponding complex-linear anti-Cauchy-Riemann operator
$\mathbf{D}_{12}^\CC$ that satisfies
$$
\mathbf{D}_2^\CC (\Phi \eta) = (\mathbf{D}_{12}^\CC \Phi) \eta + \Phi (\mathbf{D}_1^\CC \eta)
$$
for all $\Phi \in \Gamma(\Hom_\CC(E_1,E_2))$ and $\eta \in \Gamma(E_1)$.
The desired operator can then be defined as $\mathbf{D}_{12} = \mathbf{D}_{12}^\CC
+ C$, where $C : \Hom_\CC(E_1,E_2) \to \Lambda^{1,0}T^*\Sigma \otimes \Hom_\CC(E_1,E_2)$
is a bundle map taking the form
$$
(C\Phi) \eta = B(\Phi \eta) - \Phi(A \eta) \in \Lambda^{1,0}T^*\Sigma 
\otimes E_2
$$
for $(\Phi,\eta) \in \Hom_\CC(E_1,E_2) \oplus E_1$.
\end{proof}

For any vector bundle $(E_1,J_1)$ over $\Sigma$, let $E\conjug_1$ denote
its \defin{conjugate bundle}, defined as the same real vector bundle
but with complex structure~$-J_1$.  The identity map gives a natural
complex-antilinear bundle isomorphism
$$
E_1 \to E\conjug_1 : v \mapsto \bar{v},
$$
and if $E_1$ carries a Hermitian bundle metric $\langle\ ,\ \rangle_{E_1}$,
its conjugate inherits a Hermitian structure defined by
$$
\langle \bar{v},\bar{w} \rangle_{E\conjug_1} =
\langle w,v \rangle_{E_1}.
$$
There are canonical complex-linear bundle isomorphisms
$$
(E_1 \otimes E_2)\conjug = E\conjug_1 \otimes E\conjug_2, \quad
\Hom_\CC(E_1,E_2)\conjug = \Hom_\CC(E\conjug_1,E\conjug_2),
\quad
\Hom_\CC(E\conjug_1,E_2) = \overline{\Hom}_\CC(E_1,E_2),
$$
where the third of these identifies $\beta \in \Hom_\CC(E\conjug_1,E_2)$
with the antilinear map 
$$
B : E_1 \to E_2 : \eta \mapsto \beta \bar{\eta}.
$$
The metric on $\Sigma$ determines a complex-linear isomorphism
$$
(T\Sigma)\conjug \to \Lambda^{1,0}T^*\Sigma : \bar{X} \mapsto
X^{1,0} := \langle X,\cdot \rangle_\Sigma,
$$
so together with \eqref{eqn:TSigma}, this
identifies $\Lambda^{1,0}T^*\Sigma$ and $\Lambda^{0,1}T^*\Sigma$
with each other's conjugate bundles.
Observe now that if $\mathbf{D} : \Gamma(E) \to \Gamma(F)$ is a Cauchy-Riemann 
type operator, then 
$$
\mathbf{D}\conjug \bar{\eta} := \overline{\mathbf{D}\eta}
$$
defines an anti-Cauchy-Riemann type operator
$$
\mathbf{D}\conjug : \Gamma(E\conjug) \to \Gamma(F\conjug) =
\Gamma\big((\Lambda^{0,1}T^*\Sigma \otimes E)\conjug \big) =
\Gamma(\Lambda^{1,0}T^*\Sigma \otimes E\conjug) =
\Omega^{1,0}(\Sigma,E\conjug).
$$

Given an antilinear bundle map $B : E \to F$, let
$\beta : E\conjug \to F$ denote the corresponding complex-linear 
bundle map such that
$$
B \eta = \beta \bar{\eta},
$$
and let $\beta^\dagger : F \to E\conjug$ denote the adjoint of
$\beta$ with respect to the Hermitian structures on $E\conjug$ and $F$,
i.e.
$$
\langle \lambda , \beta \bar{\eta} \rangle_F =
\langle \beta^\dagger \lambda , \bar{\eta} \rangle_{E\conjug}
\quad \text{ for all } \quad
(\bar{\eta},\lambda) \in E\conjug \oplus F.
$$
Conjugating this then gives a bundle map
$$
\overline{\beta^\dagger} = \bar{\beta}^\dagger : F\conjug \to E.
$$
We claim that $\beta : E\conjug \to F$ can also be regarded as a
bundle map $F\conjug \to E$.  Indeed, using the isomorphism
$$
F\conjug = (\Lambda^{0,1}T^*\Sigma \otimes E)\conjug = 
\Lambda^{1,0}T^*\Sigma \otimes E\conjug,
$$
we obtain from $\beta : E\conjug \to F$ a bundle map
$$
F\conjug = \Lambda^{1,0}T^*\Sigma \otimes E\conjug 
\stackrel{\1 \otimes \beta}{\longrightarrow}
\Lambda^{1,0}T^*\Sigma \otimes F,
$$
where the target can be identified with $E$
via~\eqref{eqn:bndlIso}.

\begin{lemma}
\label{lemma:symmetry}
Fix a complex-linear bundle map $\beta : E\conjug \to F$ and let
$B : E \to F : \eta \mapsto \beta\bar{\eta}$.  Then $B$ satisfies the
symmetry condition \eqref{eqn:symmetryCondition} if and only if
$\beta$ and $\bar{\beta}^\dagger$ define identical bundle maps
$F\conjug \to E$.
\end{lemma}
\begin{proof}
It will suffice to show that \eqref{eqn:symmetryCondition} holds if and only
if for every $z \in \Sigma$, $\eta \in E_z$ and $\bar{\lambda} \in F\conjug_z$,
$$
\Re \langle \beta \bar{\lambda}, \eta \rangle_E =
\Re \langle \bar{\beta}^\dagger \bar{\lambda}, \eta \rangle_E.
$$
Choose any nonzero vector $X \in T_z \Sigma$; we can then write
$\lambda = X^{0,1} \otimes \xi \in \Lambda^{0,1}T^*_z \Sigma \otimes E_z = F_z$
where $\xi := \lambda(X) / |X|_\Sigma^2 \in E_z$.  Similarly,
$\beta\bar{\eta} = B \eta = X^{0,1} \otimes \theta$, 
where $\theta := B\eta(X) / |X|_\Sigma^2 \in E_z$.   Then
\begin{equation*}
\begin{split}
\langle \bar{\beta}^\dagger \bar{\lambda}, \eta \rangle_E &=
\langle \bar{\lambda} , \bar{\beta} \eta \rangle_{F\conjug} =
\langle \beta \bar{\eta} , \lambda \rangle_F
= \langle X^{0,1} \otimes \theta , X^{0,1} \otimes \xi \rangle_F 
= \langle X,X \rangle_\Sigma \langle \theta,\xi \rangle_E \\
&= \langle B\eta(X) , \xi \rangle_E.
\end{split}
\end{equation*}
Likewise, writing $\beta\bar{\xi} = X^{0,1} \otimes \zeta$ for
$\zeta := B\xi(X) / |X|_\Sigma^2 \in E_z$, we use the natural isomorphisms
\eqref{eqn:globalTriv}, \eqref{eqn:bndlIso} and
$$
(\Lambda^{0,1}T^*\Sigma)\conjug \to \Lambda^{1,0}T^*\Sigma :
\overline{X^{0,1}} \mapsto X^{1,0}
$$
to obtain
\begin{equation*}
\begin{split}
\langle \beta \bar{\lambda} , \eta \rangle_E &=
\langle \beta (X^{1,0} \otimes \bar{\xi}) , \eta \rangle_E =
\langle X^{1,0} \otimes \beta\bar{\xi} , \eta \rangle_E 
= \langle X^{1,0} \otimes X^{0,1} \otimes \zeta , \eta \rangle_E \\
&= \left\langle \langle X,X \rangle_\Sigma \frac{1}{|X|_\Sigma^2} B\xi(X) , \eta \right\rangle_E
= \langle B\xi(X) , \eta \rangle_E.
\end{split}
\end{equation*}
\end{proof}

\begin{proof}[Proof of Proposition~\ref{prop:Weitzenbock}]
Writing $\mathbf{D}_B^* = \mathbf{D}^* + B^*$, we first expand
$$
\mathbf{D}_B^* \mathbf{D}_B \eta = (\mathbf{D}^* + B^*)(\mathbf{D} + B) \eta
= \mathbf{D}^*\mathbf{D} \eta + B^*B \eta + \mathbf{D}^*(B\eta) +
B^*(\mathbf{D} \eta).
$$
We will see that all derivatives of $\eta$ cancel in the sum of the last two
terms.  Write $B\eta = \beta \bar{\eta}$, where $\beta \in 
\Gamma(\Hom_\CC(E\conjug,F))$.
To understand $\mathbf{D}^*(B\eta) = \mathbf{D}^*(\beta\bar{\eta})$,
we can view $-\mathbf{D}^*$ as an anti-Cauchy-Riemann type operator on $F$,
and since $\mathbf{D}\conjug$ is likewise an anti-Cauchy-Riemann type
operator on $E\conjug$, Lemma~\ref{lemma:induced} provides an anti-Cauchy-Riemann
type operator $\p_H$ on $\Hom_\CC(E\conjug,F)$ such that
\begin{equation}
\label{eqn:applyLeibniz}
- \mathbf{D}^*(\beta\bar{\eta}) = (\p_H \beta) \bar{\eta} + 
\beta \, \mathbf{D}\conjug \bar{\eta}.
\end{equation}
For the final term in the expansion, observe that for any
$z \in \Sigma$, $\xi \in E_z$ and $\lambda \in F_z$,
$$
\Re \langle \lambda , B\eta \rangle_F = \Re \langle \lambda , \beta\bar{\eta} \rangle_F
= \Re \langle \beta^\dagger \lambda , \bar{\eta} \rangle_{E\conjug}
= \Re \langle \eta , \bar{\beta}^\dagger \bar{\lambda} \rangle_E
= \Re \langle \bar{\beta}^\dagger \bar{\lambda} , \eta \rangle_E,
$$
which gives the formula $B^* \lambda = \bar{\beta}^\dagger \bar{\lambda}$,
hence
\begin{equation}
\label{eqn:applyBstar}
B^*(\mathbf{D} \eta) = \bar{\beta}^\dagger \, \mathbf{D}\conjug \bar{\eta}.
\end{equation}
Putting \eqref{eqn:applyLeibniz} and \eqref{eqn:applyBstar} together and
applying Lemma~\ref{lemma:symmetry}, we have
$$
\mathbf{D}^*(B\eta) + B^*(\mathbf{D} \eta) =
- (\p_H \beta) \bar{\eta} + (\bar{\beta}^\dagger - \beta) \, \mathbf{D}\conjug \bar{\eta}
= - (\p_H \beta) \bar{\eta},
$$
and the stated formula follows by using the natural identification of 
$\Hom_\CC(E\conjug,F)$ with $\overline{\Hom}_\CC(E,F)$ to view $\p_H$ as an
anti-Cauchy-Riemann type operator on the latter.
\end{proof}

Suppose next that $(\widetilde{\Sigma},\tilde{\jmath})$ is another closed
connected Riemann surface.

\begin{defn}
\label{defn:pullbackCR}
Given a nonconstant holomorphic map 
$\varphi : (\widetilde{\Sigma},\tilde{\jmath}) \to (\Sigma,j)$
and a Cauchy-Riemann type operator $\mathbf{D}$ on $E$,
define $\varphi^*\mathbf{D}$ to be the unique Cauchy-Riemann type operator 
on $\varphi^*E$ that satisfies
\begin{equation}
\label{eqn:coveringOperator}
(\varphi^*\mathbf{D})(\eta \circ \varphi) = \varphi^*(\mathbf{D}\eta)
\quad\text{ for all } \quad \eta \in \Gamma(E).
\end{equation}
\end{defn}
The uniqueness of $\varphi^*\mathbf{D}$ is clear from 
\eqref{eqn:coveringOperator}.  To see that such an operator always exists,
write $\mathbf{D} = \mathbf{D}^\CC + A$ where $\mathbf{D}^\CC$ is
a complex-linear Cauchy-Riemann type operator and $A : E \to F$ is a
real-linear bundle map, which we can view equivalently as a $(0,1)$-form valued
in $\End_\RR(E)$.  Then $\mathbf{D}^\CC$ induces a holomorphic bundle structure
on $E$, which pulls back to define a holomorphic structure on $\varphi^*E$
and consequently a Cauchy-Riemann type operator $\varphi^*\mathbf{D}^\CC$.
The operator $\varphi^*\mathbf{D}^\CC + \varphi^*A$ then satisfies
\eqref{eqn:coveringOperator}.

\begin{example}
If $u : (\Sigma,j) \to (M,J)$ is an immersed $J$-holomorphic curve and
$\tilde{u} = u \circ \varphi$, then $\mathbf{D}_{\tilde{u}}^N =
\varphi^*\mathbf{D}_u^N$.
\end{example}

The next lemma is only interesting when $\varphi$ has branch points and is
thus not needed for the proof of Theorem~\ref{thm:super}, but the general
case of Proposition~\ref{prop:linear} requires it.  Given $\mathbf{D}$
and $B$ as in Proposition~\ref{prop:Weitzenbock} and a nonconstant
holomorphic map $\varphi : (\widetilde{\Sigma},\tilde{\jmath})
\to (\Sigma,j)$, let us abbreviate
$$
\widetilde{E} = \varphi^*E, \qquad \widetilde{F} = \Lambda^{0,1}T^*\widetilde{\Sigma} \otimes \widetilde{E},
\qquad
\widetilde{\mathbf{D}} = \varphi^*\mathbf{D} : 
\Gamma(\widetilde{E}) \to \Gamma(\widetilde{F}).
$$
Viewing $B$ as an $\overline{\End}_\CC(E)$-valued $(0,1)$-form
on~$\Sigma$, we can then define
$$
\widetilde{B} = \varphi^*B \in \Omega^{0,1}(\widetilde{\Sigma},
\overline{\End}_\CC(\widetilde{E})), \qquad
\widetilde{\mathbf{D}}_B = \widetilde{\mathbf{D}} + \widetilde{B} :
\Gamma(\widetilde{E}) \to \Gamma(\widetilde{F}).
$$
Choose a Hermitian structure $\langle\ ,\ \rangle_{\widetilde{\Sigma}}$
on $T\widetilde{\Sigma}$, whose real part is then a
$\tilde{\jmath}$-invariant Riemannian metric on $\widetilde{\Sigma}$.
The bundles $\widetilde{E}$
and $\widetilde{F}$ now inherit natural Hermitian structures, the former
as the pullback of $E$ and the latter as the tensor product
$\Lambda^{0,1}T^*\widetilde{\Sigma} \otimes \widetilde{E}$, and these
determine formal adjoint
operators $\widetilde{\mathbf{D}}^*$ and $\widetilde{\mathbf{D}}_{B}^*$.
The symmetry assumption \eqref{eqn:symmetryCondition} on $B$ implies 
that $\widetilde{B}$ also
satisfies this condition, so that Proposition~\ref{prop:Weitzenbock} gives
a Weitzenb\"ock formula over $\widetilde{\Sigma}$ in the form
$$
\widetilde{\mathbf{D}}_{B}^* \widetilde{\mathbf{D}}_{B} \eta
= \widetilde{\mathbf{D}}^* \widetilde{\mathbf{D}} \eta +
\widetilde{B}^*\widetilde{B} \eta - (\tilde{\p}_H \widetilde{B}) \eta
$$
for some anti-Cauchy-Riemann type operator $\tilde{\p}_H$ on
$\overline{\Hom}_\CC(\widetilde{E},\widetilde{F})$.

\begin{lemma}
\label{lemma:branched}
Assume the Riemannian metric $\Re \langle\ ,\ \rangle_{\widetilde{\Sigma}}$
on $\widetilde{\Sigma}$ is flat near all
critical points of~$\varphi$.  Then there exists a constant $c > 0$ such that 
$$
\big| \tilde{\p}_H \widetilde{B}(z) \big| \le
c | d\varphi(z) |^2
\quad\text{ for all }\quad
z \in \widetilde{\Sigma}.
$$ 
\end{lemma}
\begin{proof}
Recall from the proof of Proposition~\ref{prop:Weitzenbock} that after
identifying $\overline{\Hom}_\CC(\widetilde{E},\widetilde{F})$ with
$\Hom_\CC(\widetilde{E}\conjug,\widetilde{F})$ by writing
$\widetilde{B}\eta = \tilde{\beta} \bar{\eta}$
for $\tilde{\beta} \in \Gamma(\Hom_\CC(\widetilde{E}\conjug,\widetilde{F}))$,
the operator $\tilde{\p}_H$
is determined by the two anti-Cauchy-Riemann type operators
$\widetilde{\mathbf{D}}\conjug$ and $-\widetilde{\mathbf{D}}^*$ via a
Leibniz rule.  It will suffice to check that $|\tilde{\p}_H \tilde{\beta}|
\le c |d\varphi|^2$ holds in suitable local trivializations in a
neighborhood of each branch point $z_0 \in \widetilde{\Sigma}$.
Since the metric on $\widetilde{\Sigma}$ is assumed flat near $z_0$ 
and induces the same conformal structure as $\tilde{\jmath}$, 
we can find holomorphic coordinates $z = s + it$ on some neighorhood
$\widetilde{\uU} \subset \widetilde{\Sigma}$ of $z_0$ in which
the area form determined by the metric is $ds \wedge dt$,
and the induced bundle metric on $\Lambda^{0,1}T^*\widetilde{\Sigma}|_{\widetilde{\uU}}$ 
satisfies $| d\bar{z} |_{\widetilde{\Sigma}} = 1$.  Choose holomorphic
coordinates also on a neighborhood $\uU \subset \Sigma$
of $\varphi(z_0)$ and assume without loss of generality that
$\varphi(\widetilde{\uU}) = \uU$.  Next, fix a unitary trivialization of $E|_\uU$,
pull it back to define a trivialization of $\widetilde{E}|_{\widetilde{\uU}}$,
and use this together with the frame $d\bar{z}$ to trivialize
$\widetilde{F} = \Lambda^{0,1}T^*\widetilde{\Sigma} \otimes \widetilde{E}$
over~$\widetilde{\uU}$.  These trivializations identify $\mathbf{D}$ and 
$\widetilde{\mathbf{D}}$ locally with operators of the form
$$
\mathbf{D} = \dbar + A, \qquad \widetilde{\mathbf{D}} = \dbar + \tilde{A},
$$
where $\dbar = \p_s + i\p_t$, $A : \uU \to \End_\RR(\CC^m)$ and
$\tilde{A} : \widetilde{\uU} \to \End_\RR(\CC^m)$.  
Using the natural trivialization induced on $\widetilde{E}\conjug|_{\widetilde{\uU}}$ 
for which the canonical antilinear isomorphism $\widetilde{E} \to
\widetilde{E}\conjug$ appears as complex conjugation, $\widetilde{\mathbf{D}}\conjug$ can now
be written as
$$
\widetilde{\mathbf{D}}\conjug = \p + \tilde{A}\conjug,
$$
where $\tilde{A}\conjug : \widetilde{\uU} \to \End_\RR(\CC^m)$ is defined by
$\tilde{A}\conjug \bar{\eta} = \overline{\tilde{A} \eta}$.
Observe now that our
trivializations of $\widetilde{E}$ and $\widetilde{F}$ over $\widetilde{\uU}$
are both unitary, and since the area form $\widetilde{\uU}$ is also standard
in coordinates, the formal adjoint of $\widetilde{\mathbf{D}}$ takes the form
$$
\widetilde{\mathbf{D}}^* = -\p + \tilde{A}^\transpose.
$$
From these expressions and the Leibniz rule (cf.~the proof of
Lemma~\ref{lemma:induced}), one derives a function
$\widetilde{C} : \widetilde{\uU} \to \End_\RR(\End_\CC(\CC^m))$ such that the local
formula for $\tilde{\p}_H$ as a differential operator on
$\End_\CC(\CC^m)$-valued functions is
\begin{equation}
\label{eqn:pH}
\tilde{\p}_H = \p + \widetilde{C} \quad \text{ where } \quad
(\widetilde{C}\Phi) \bar{\eta} = -\tilde{A}^\transpose (\Phi\bar{\eta}) -
\Phi (\tilde{A}\conjug \bar{\eta}).
\end{equation}
Recall now that since $\widetilde{\mathbf{D}} = \varphi^*\mathbf{D}$,
$A$ and $\tilde{A}$ represent elements of
$\Omega^{0,1}(\Sigma,\End_\RR(E))$ and $\Omega^{0,1}(\widetilde{\Sigma},\End_\RR(\widetilde{E}))$
respectively, with the latter being the pullback of the former via~$\varphi$.
To make this explicit, the function $A : \uU \to \End_\RR(\CC^m)$
represents a $(0,1)$-form that corresponds under our 
trivialization of $E|_\uU$ to $d\bar{z} \otimes A \in \Omega^{0,1}(\End_\RR(\CC^m))$,
and $\tilde{A}$ then corresponds to the pullback
$\varphi^*(d\bar{z} \otimes A) = d\bar{\varphi} \otimes (A \circ \varphi)
= d\bar{z} \otimes \overline{\varphi'} \cdot (A \circ \varphi)$, giving the relation
$$
\tilde{A}(z) = \overline{\varphi'(z)} A(\varphi(z)).
$$
This implies an estimate of the form $|\tilde{A}(z)| \le c |\varphi'(z)|$ and,
by \eqref{eqn:pH}, a similar estimate for $|\widetilde{C}(z)|$.  Finally, 
viewing $\tilde{\beta}$ as a $(0,1)$-form valued in $\Hom_\CC(\widetilde{E}\conjug,\widetilde{E})$,
it is also the pullback of a $\Hom_\CC(E\conjug,E)$-valued $(0,1)$-form and is
thus similarly represented in trivializations by a function
$\tilde{\beta} : \widetilde{\uU} \to \End_\CC(\CC^m)$ that satisfies
$$
\tilde{\beta}(z) = \overline{\varphi'(z)} \beta(\varphi(z))
$$
for some function $\beta : \uU \to \End_\CC(\CC^m)$.  The estimate
$| \tilde{\p}_H \tilde{\beta} | = 
| \p \tilde{\beta} + \widetilde{C} \tilde{\beta} | \le c | \varphi' |^2$
now follows by a short calculation: indeed, $|\widetilde{C} \tilde{\beta} |
\le |\widetilde{C}| \cdot |\tilde{\beta}| \le c |\varphi'|^2$ for some
$c > 0$, and since $\overline{\varphi'}$ is antiholomorphic,
$\p\tilde{\beta} = \p\left( \overline{\varphi'} \cdot (\beta \circ \varphi) \right)
= \overline{\varphi'} (\p\beta \circ \varphi) \varphi'$
similarly satisfies $| \p\tilde{\beta} | \le c |\varphi'|^2$.
\end{proof}

}

\section{Regularity for the linearized operator}
\label{sec:linear}

We now state and prove a linear perturbation result that implies 
Proposition~\ref{prop:linear}.  {The result is a higher-dimensional
generalization of results for complex line bundles that were proved by
Taubes \cites{Taubes:counting,Taubes:SWtoGr}, and similar results
stated in \cite{Rauch}.}

Assume $(\Sigma,j)$ \rev{and $(\widetilde{\Sigma},\tilde{\jmath})$ are
closed connected Riemann surfaces, $\varphi : (\widetilde{\Sigma},\tilde{\jmath})
\to (\Sigma,j)$ is a holomorphic map of degree $d \ge 1$,}
$(E,J) \to (\Sigma,j)$ is a complex vector bundle of rank $m \ge 1$, 
and $\mathbf{D} : \Gamma(E) \to \Omega^{0,1}(\Sigma,E)$
is a real-linear Cauchy-Riemann type operator.  \rev{As in the previous
section, we shall abbreviate
$$
\widetilde{E} = \varphi^*E, \qquad \widetilde{\mathbf{D}} = \varphi^*\mathbf{D},
$$
where $\varphi^*\mathbf{D} : \Gamma(\varphi^*E) \to \Omega^{0,1}(\widetilde{\Sigma},\varphi^*E)$ 
denotes the induced Cauchy-Riemann type
operator on the pullback (see Definition~\ref{defn:pullbackCR}).}

Now assume $\ind(\mathbf{D}) = 0$.  By the Riemann-Roch formula, this
means
$$
- c_1(E) = m \chi(\Sigma) + c_1(E) = c_1(\overline{\Hom}_\CC(T\Sigma,E)),
$$
so there exists a complex-antilinear bundle isomorphism
$$
B : E \to \overline{\Hom}_\CC(T\Sigma,E).
$$
Choosing a Hermitian bundle metric \rev{$\langle\ ,\ \rangle_E$ on $E$},
we can also arrange
by Proposition~\ref{prop:symmetry} that $B$ satisfies the symmetry condition
\begin{equation}
\label{eqn:symm}
\rev{\Re \langle \xi , B\eta(X) \rangle_E = \Re \langle B\xi(X) , \eta \rangle_E}
\quad\text{ for all $(X,\xi,\eta) \in T\Sigma \oplus E \oplus E$}.
\end{equation}
This gives rise to a $1$-parameter family of real-linear Cauchy-Riemann type
operators on $\widetilde{E}$, defined by
$$
\rev{\widetilde{\mathbf{D}}_\tau = \varphi^*(\mathbf{D} + \tau B)
= \widetilde{\mathbf{D}} + \tau \widetilde{B}}
$$
for $\tau \in \RR$\rev{, where we abbreviate $\widetilde{B} := \varphi^*B$ 
with $B$ regarded as an $\overline{\End}_\CC(E,J)$-valued
$(0,1)$-form}.  Let $Z(d\varphi) \ge 0$ denote the algebraic count of
branch points of~$\varphi$, which is $-\chi(\widetilde{\Sigma}) + 
d \chi(\Sigma)$ by the Riemann-Hurwitz formula.  Then
\begin{equation*}
\begin{split}
\ind(\widetilde{\mathbf{D}}_\tau) &= m\chi(\widetilde{\Sigma}) + 2 c_1(\varphi^*E)
= m \left[ d\chi(\Sigma) - Z(d\varphi) \right] + 2 d c_1(E) \\
&= d \cdot \ind(\mathbf{D}) - m Z(d\varphi) = - m Z(d\varphi) \le 0.
\end{split}
\end{equation*}

\begin{thm}
\label{thm:linear2}
The operators $\widetilde{\mathbf{D}}_\tau : 
\Gamma(\widetilde{E}) \to \Omega^{0,1}(\Sigma,\widetilde{E})$
defined above are injective for all $\tau \in \RR$ outside of a discrete
subset.
\end{thm}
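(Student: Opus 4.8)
The plan is to reproduce, in the vector-bundle setting, the two-part strategy of Taubes recalled in \S\ref{sec:Taubes}: a Bochner--Weitzenb\"ock estimate showing that $\widetilde{\mathbf{D}}_\tau$ is injective for $|\tau|$ large, followed by an analytic perturbation theory argument upgrading this to injectivity for all $\tau$ outside a discrete set. For the second part, view each $\widetilde{\mathbf{D}}_\tau$ through its complexification $\widetilde{\mathbf{D}}_\tau \otimes_\RR \CC$, acting on the Sobolev completions of $\Gamma(\widetilde{E} \otimes_\RR \CC)$; since $\ker(\widetilde{\mathbf{D}}_\tau \otimes \CC) = \ker(\widetilde{\mathbf{D}}_\tau) \otimes_\RR \CC$, the operator $\widetilde{\mathbf{D}}_\tau$ is injective if and only if its complexification is. The family $\tau \mapsto \widetilde{\mathbf{D}}_\tau \otimes \CC$ is affine in $\tau$, hence a holomorphic family of Fredholm operators parametrized by $\tau \in \CC$, so by the result recalled in the Appendix the set of $\tau \in \CC$ for which this operator has nontrivial kernel is either all of $\CC$ or a discrete subset of $\CC$. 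It therefore suffices to exhibit a single $\tau$ at which $\widetilde{\mathbf{D}}_\tau$ is injective: the exceptional set is then discrete in $\CC$, and in particular meets $\RR$ in a discrete set.

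For the first part I would carry out the Bochner--Weitzenb\"ock computation first on $(\Sigma,j)$, for the operator $\mathbf{D} + \tau B$ itself. Exactly as in \eqref{eqn:Weitzenbock}, one expands $\|(\mathbf{D} + \tau B)\eta\|_{L^2}^2 = \|\mathbf{D}\eta\|_{L^2}^2 + \tau^2 \|B\eta\|_{L^2}^2 + 2\tau \langle \mathbf{D}\eta, B\eta \rangle_{L^2,\RR}$ and integrates the cross term by parts. Antilinearity of $B$ is what allows the $\dbar$-part of $\mathbf{D}\eta$ to be traded for a term with no derivatives of $\eta$; the symmetry condition \eqref{eqn:symm} provided by Proposition~\ref{prop:symmetry} --- which in a local unitary frame says precisely that the matrix representing $B$ is complex-symmetric --- is what makes this integration by parts close up for a bundle of rank $>1$ rather than just a line bundle, and since $\Sigma$ is closed there are no boundary terms. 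The result is an identity $\|(\mathbf{D}+\tau B)\eta\|_{L^2}^2 = \|\mathbf{D}\eta\|_{L^2}^2 + \tau^2\|B\eta\|_{L^2}^2 + \tau \langle R\eta, \eta\rangle_{L^2}$ with $R$ a bounded bundle endomorphism built from $\nabla B$ and the zeroth-order part of $\mathbf{D}$. Because $B$ is a bundle isomorphism one has $\|B\eta\|_{L^2}^2 \ge c\|\eta\|_{L^2}^2$ with $c>0$, so $\mathbf{D} + \tau B$ is injective once $|\tau| > \|R\|_\infty/c$. Moreover this estimate, with the same constants, applies verbatim to every twist $\mathbf{D}_\rho + \tau B_\rho$ of $\mathbf{D} + \tau B$ by a flat unitary (orbifold) vector bundle $L_\rho$ on $\Sigma$: tensoring by $L_\rho$ changes neither $\|B_\rho\eta\|/\|\eta\|$ nor, computing in a local flat frame, $\|R_\rho\|_\infty$. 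Hence all such twists are injective for $|\tau| > \|R\|_\infty/c$, uniformly in $\rho$.

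Now pass upstairs. If $\varphi$ is unbranched, then $B_\varphi$ is a bundle isomorphism and $\widetilde{\Sigma}$ is closed, so the identical Bochner argument applied directly to $\widetilde{\mathbf{D}}_\tau = \widetilde{\mathbf{D}}_0 + \tau B_\varphi$ gives injectivity for $|\tau|$ large, and we are done. For a general $\varphi$ the zeroth-order perturbation $B_\varphi$ degenerates at the $Z(d\varphi)$ branch points, so the naive estimate fails; here I would reduce to the previous case by a Galois-closure argument. Choose $\pi : \hat{\Sigma} \to \widetilde{\Sigma}$ so that $\psi := \varphi \circ \pi : \hat{\Sigma} \to \Sigma$ is a regular branched cover with finite deck group $G$; then $\pi^*\widetilde{\mathbf{D}}_\tau = \psi^*(\mathbf{D} + \tau B)$ is $G$-equivariant, and the standard isotypic decomposition identifies each $\rho$-isotypic piece of its kernel with the kernel of $\mathbf{D}$ twisted by the flat (orbifold) bundle on $\Sigma$ associated to the irreducible representation $\rho$ of $G$. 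By the uniform estimate above each such kernel vanishes for $|\tau| > \|R\|_\infty/c$, so $\ker(\pi^*\widetilde{\mathbf{D}}_\tau) = 0$; since $\pi$ is surjective, every $\eta \in \ker \widetilde{\mathbf{D}}_\tau$ satisfies $\eta \circ \pi = 0$ and hence $\eta = 0$. Combining this with the analytic perturbation dichotomy of the first paragraph completes the proof.

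The bulk of the real work sits in the second paragraph: tracking the vector-valued integration by parts carefully enough to see that \eqref{eqn:symm} is exactly what is needed to eliminate the first-order part of the cross term, identifying the endomorphism $R$, and --- for the reduction in the third paragraph --- checking that the resulting estimate is genuinely uniform over all flat twists, including those carrying orbifold monodromy at the branch locus of $\psi$ (where uniformity follows by lifting the estimate to the smooth surface $\hat{\Sigma}$). The isotypic decomposition of the kernel of an equivariant elliptic operator and the analytic Fredholm dichotomy are then formal.
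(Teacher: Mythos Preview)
Your overall strategy and the unbranched case are essentially the paper's proof: the integration-by-parts identity you describe is carried out as \eqref{eqn:Parts} (using the symmetry hypothesis \eqref{eqn:symm} exactly as you anticipate), and the analytic perturbation argument is the same.

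For branched $\varphi$ the paper proceeds differently and more directly. It performs the Bochner computation on $\widetilde{\Sigma}$ itself and confronts the degeneration of $B_\varphi$ head-on: a local coordinate calculation near each branch point (estimates \eqref{eqn:estimate1}--\eqref{eqn:estimate2}) shows that $A_\varphi$ and $\partial_\nabla\beta_\varphi$ vanish to at least the same order as $B_\varphi$, so the cross terms are bounded by $c\|B_\varphi\eta\|_{L^2}^2$ rather than by $c\|\eta\|_{L^2}^2$. One therefore obtains only the weaker inequality $\|\widetilde{\mathbf{D}}_\tau\eta\|_{L^2}^2 \ge (c\tau^2 - c'\tau)\|B_\varphi\eta\|_{L^2}^2$, which still forces injectivity for large $\tau$ since $B_\varphi$ is invertible away from a finite set.

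Your Galois-closure alternative does not actually sidestep this local analysis. On $\hat{\Sigma}$ the perturbation is $\psi^*B$, which degenerates at the branch points of $\psi$ just as $B_\varphi$ does, so the Bochner estimate upstairs faces exactly the difficulty you were trying to avoid; your parenthetical ``uniformity follows by lifting to $\hat{\Sigma}$'' is therefore circular. If instead you attempt to work genuinely on the orbifold $\Sigma$, two things need justification: (a) how to twist the \emph{real}-linear operator $\mathbf{D}+\tau B$ by the flat bundle associated to a complex irreducible $\rho$ (the antilinear $B$ does not commute with complex scalars on $V_\rho$, so ``$B_\rho$'' is not simply $B\otimes 1$), and (b) the integration by parts and uniformity of constants for orbifold sections with nontrivial isotropy at the cone points---which, once unwound in a local uniformizing chart, is an order-of-vanishing computation equivalent to the paper's. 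The direct route is shorter and is what is actually needed.
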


\begin{remark}
\label{remark:branched}
The proof of Theorem~\ref{thm:super} only requires the special case of
Theorem~\ref{thm:linear2} for which $\varphi : (\widetilde{\Sigma},\tilde{\jmath})
\to (\Sigma,j)$ is unbranched, and in this case \rev{the proof below becomes
somewhat simpler, e.g.~it does not require Lemma~\ref{lemma:branched}.}
The general case of
Theorem~\ref{thm:linear2} may nonetheless be useful for proving stronger
super-rigidity results.
\end{remark}

As in \S\ref{sec:Taubes}, we can use analytic perturbation theory to
reduce this theorem to a statement for particular values of~$\tau$.
We first extend $\widetilde{\mathbf{D}}_\tau$ to a Fredholm operator between
Hilbert spaces $H^1$ and $L^2$, each regarded as \emph{real} vector spaces
(since $\widetilde{\mathbf{D}}_\tau$ itself is real and not complex linear),
then complexify and consider the family of complex-linear Fredholm operators
$$
\widetilde{\mathbf{D}}_\tau : H^1(\widetilde{E}) \otimes \CC \to
L^2(\overline{\Hom}_\CC(T\widetilde{\Sigma},\widetilde{E})) \otimes \CC
$$
for $\tau \in \CC$.  This family depends holomorphically on $\tau$. 
Note that for $\tau\in\RR$, the underlying operator $\widetilde{\mathbf{D}}_\tau$ 
is injective whenever its complexification is injective. Thus
by Proposition~\ref{prop:analytic} in the appendix, 
in order to prove Theorem~\ref{thm:linear2}, 
it suffices to establish the following:

\begin{lemma}
\label{lemma:linear2}
The operator $\widetilde{\mathbf{D}}_\tau$ is injective for all sufficiently 
large $\tau > 0$.
\end{lemma}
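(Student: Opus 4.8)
The plan is to carry out the vector-bundle analogue of Taubes's Bochner--Weitzenb\"ock argument from \S\ref{sec:Taubes}, with the symmetry condition \eqref{eqn:symm} playing the role that was automatic in the line-bundle case. Write $\widetilde{\mathbf{D}}_\tau = \dbar_\nabla + \varphi^*A + \tau B_\varphi$ on $\widetilde{E} = \varphi^*E$, where $B_\varphi\eta = B\eta\circ T\varphi$. The first step is an a priori estimate on $\ker\widetilde{\mathbf{D}}_\tau$: for $\eta$ with $\widetilde{\mathbf{D}}_\tau\eta = 0$ one expands
$$
0 = \big\| \dbar_\nabla\eta + \varphi^*A\,\eta + \tau B_\varphi\eta \big\|_{L^2}^2
$$
and estimates the three cross-terms. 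Two points make this go through. First, $B_\varphi$ is bounded in $C^1$ even along the branch locus of $\varphi$, since $T\varphi$ and its covariant derivative are smooth on the closed surface $\widetilde\Sigma$; and \eqref{eqn:symm} passes to $B_\varphi$ (one has $\langle\xi,B_\varphi\eta(X)\rangle_\RR = \langle B_\varphi\xi(X),\eta\rangle_\RR$, since precomposing both sides with $T\varphi$ is harmless), so exactly as in \eqref{eqn:Weitzenbock} the term $2\tau\,\Re\int\langle\dbar_\nabla\eta,B_\varphi\eta\rangle$ may be integrated by parts with the genuinely top-order contribution cancelling, leaving only a zeroth-order remainder involving $\nabla B_\varphi$, bounded by $C\tau\|\eta\|_{L^2}^2$. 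Second, because $B$ is an isomorphism one has the pointwise bounds $c\,|d\varphi|\,|\eta| \le |B_\varphi\eta| \le C\,|d\varphi|\,|\eta|$ and $|\varphi^*A|\le C|d\varphi|$, hence $\|\varphi^*A\,\eta\|_{L^2}\le C\|B_\varphi\eta\|_{L^2}$, so the $\varphi^*A$-terms are absorbed. The expected outcome is that there are constants $C,\tau_0>0$ with
$$
\| \dbar_\nabla\eta \|_{L^2}^2 + \tau^2 \| B_\varphi\eta \|_{L^2}^2 \ \le\ C\tau\, \| \eta \|_{L^2}^2
\qquad\text{for all }\tau\ge\tau_0\text{ and all }\eta\in\ker\widetilde{\mathbf{D}}_\tau .
$$

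When $\varphi$ is unbranched --- the only case actually needed for Theorem~\ref{thm:super}, cf.\ Remark~\ref{remark:branched} --- this already finishes the proof: $T\varphi$ is everywhere invertible, so $B_\varphi$ is a bundle isomorphism, $\|B_\varphi\eta\|_{L^2}^2\ge c\|\eta\|_{L^2}^2$, and the estimate forces $c\tau^2\le C\tau$, impossible once $\tau>C/c$; hence $\ker\widetilde{\mathbf{D}}_\tau=0$ for all large $\tau$, and Theorem~\ref{thm:linear2} follows via Proposition~\ref{prop:analytic}.

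In the branched case $B_\varphi$ degenerates on the finite branch set, and the estimate only says that kernel elements concentrate there; this is where the real work lies, and I expect it to be the main obstacle. I would argue by contradiction: if $\tau_k\to\infty$, $\eta_k\in\ker\widetilde{\mathbf{D}}_{\tau_k}$, $\|\eta_k\|_{L^2}=1$, then $\|B_\varphi\eta_k\|_{L^2}\to0$, and since $|d\varphi|$ is bounded below away from the branch set, $\|\eta_k\|_{L^2(\widetilde\Sigma\setminus U)}\to0$ for every neighbourhood $U$ of it; after passing to a subsequence the $L^2$-mass concentrates at a single branch point $p$. In a holomorphic chart around $p$ in which $\varphi$ has the normal form $w\mapsto w^\ell$ with $\ell\ge2$ and $\widetilde{E}$ is unitarily trivialised, the equation becomes
$$
\partial_{\bar w}\eta_k + a_k\,\eta_k + \tau_k\,\ell\,w^{\ell-1}\,\beta_k\,\overline{\eta_k} = 0 ,
$$
with $a_k,\beta_k$ bounded in $C^\infty$, each $\beta_k(w)$ invertible, and $\beta_k$ symmetric by \eqref{eqn:symm}. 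One then performs a blow-up analysis: rescale $w=\lambda_k w'$ at the natural scale $\lambda_k\sim\tau_k^{-1/\ell}$ at which the Cauchy--Riemann and the $\tau_k$-terms balance, renormalising the rescaled sections $\hat\eta_k$ so that they are $L^2$-bounded on compact sets and nonvanishing in the limit; the rescaled equations take the form $\partial_{\bar w'}\hat\eta_k + (\lambda_k a_k)\hat\eta_k + (\lambda_k^\ell\tau_k\ell)(w')^{\ell-1}\beta_k\overline{\hat\eta_k}=0$, the $a_k$-term drops, and $\hat\eta_k\to\hat\eta_\infty$ in $\Cinftyloc(\CC)$ with $\hat\eta_\infty$ a nonzero solution of a model operator $\partial_{\bar w}+c\,w^{\ell-1}\beta_\infty\overline{(\cdot)}$ on $\CC$. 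The last step is to rule this out: the model operator has no nonzero solution in the relevant space, which I would establish by its own Weitzenb\"ock estimate on $\CC$ --- the zeroth-order term is coercive outside a compact set, since $|w|^{2(\ell-1)}$ dominates the $\nabla$-remainder of order $|w|^{\ell-2}$ at infinity --- combined with the similarity principle and a degree count controlling the compact part and the decay at infinity.

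I regard the branched step --- choosing the correct renormalisation, extracting the bubble, and proving the vanishing theorem for the model operator --- as the part requiring the most care; away from the branch points the argument is the routine higher-rank analogue of Taubes's computation, and Remark~\ref{remark:branched} records that this simpler part already suffices for the applications in the present paper.
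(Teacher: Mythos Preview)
Your treatment of the unbranched case is correct and matches the paper's argument. For the branched case, however, you miss the observation that makes the paper's proof go through \emph{directly}, without any blow-up.

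The point you overlook is that the remainder from the integration by parts is not a generic $\nabla B_\varphi$ term bounded only by $C\tau\|\eta\|_{L^2}^2$: it is specifically $\p_\nabla\beta_\varphi$, the $(1,0)$-derivative. In local coordinates near a branch point with $\varphi(z)=z^p$, one has $\hat\beta_\varphi(z)=p\,\hat\beta(z^p)\,\bar z^{\,p-1}$ (note $\bar z^{\,p-1}$, not $z^{p-1}$ as you wrote), and since $\bar z^{\,p-1}$ is antiholomorphic,
\[
\frac{\partial\hat\beta_\varphi}{\partial z}(z)=p^2\,|z|^{2(p-1)}\,\frac{\partial\hat\beta}{\partial z}(z^p),
\]
which vanishes to the \emph{same} order $2(p-1)$ as $|B_\varphi|^2$. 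Combined with the estimate $|A_\varphi\eta|\le c|B_\varphi\eta|$ (which you do see), this gives the pointwise bound $|\overline{\p_\nabla\beta_\varphi}\wedge\eta\wedge J\eta|\le c\,|B_\varphi\eta|^2\,\vol$, hence
\[
\|\widetilde{\mathbf{D}}_\tau\eta\|_{L^2}^2 \ \ge\ (c\tau^2-c'\tau)\,\|B_\varphi\eta\|_{L^2}^2,
\]
and since $B_\varphi$ is nonsingular almost everywhere this forces $\eta=0$ for $\tau$ large. No concentration/rescaling analysis is needed.

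Your blow-up route is a genuinely different approach and is plausible in outline: after rescaling one arrives at a model operator on~$\CC$ with \emph{constant} $\beta_\infty$ and factor $\bar w^{\,\ell-1}$, for which the Weitzenb\"ock remainder vanishes identically, so the model kernel is zero in any space where the integration by parts is justified. But as stated your argument is incomplete --- you have not established a renormalisation under which a \emph{nontrivial} limit exists with enough decay at infinity to run the model estimate --- and it is in any case superfluous once one notices that the original remainder already carries the factor $|d\varphi|^2$.
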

\begin{proof}
\rev{Choose a Hermitian bundle metric on $T\widetilde{\Sigma}$ that matches
the standard Hermitian inner product in some choice of local holomorphic
coordinates near each of the branch points of~$\varphi$.  This gives
rise to a family of formal adjoint operators $\widetilde{\mathbf{D}}_\tau^*$
with $\widetilde{\mathbf{D}}_0^* =: \widetilde{\mathbf{D}}^*$ such that
by Proposition~\ref{prop:Weitzenbock},
$$
\widetilde{\mathbf{D}}_\tau^* \widetilde{\mathbf{D}}_\tau \eta
= \widetilde{\mathbf{D}}^* \widetilde{\mathbf{D}} \eta +
\tau^2 \widetilde{B}^*\widetilde{B} \eta - \tau (\tilde{\p}_H \widetilde{B}) \eta,
$$
and Lemma~\ref{lemma:branched} also implies
$$
\big| \tilde{\p}_H \widetilde{B} \big| \le c_1 |d\varphi|^2
$$
for some $c_1 > 0$.  Since $B$ is a bundle isomorphism, we can find another
constant $c_2 > 0$, such that $| B \eta | \ge c_2 |\eta|$ and thus
$$
\big| \widetilde{B} \eta \big| \ge c_2 |d\varphi| \cdot |\eta|.
$$
We then find for every $\eta \in \Gamma(\widetilde{E})$,
\begin{equation*}
\begin{split}
\| \widetilde{\mathbf{D}}_\tau \eta \|_{L^2}^2 &=
\left\langle \eta , \widetilde{\mathbf{D}}_\tau^* \widetilde{\mathbf{D}}_\tau \eta \right\rangle_{L^2}
= \left\langle \eta , \widetilde{\mathbf{D}}^* \widetilde{\mathbf{D}} \eta +
\tau^2 \widetilde{B}^*\widetilde{B} \eta - \tau (\tilde{\p}_H \widetilde{B}) \eta \right\rangle_{L^2} \\
&= \| \widetilde{\mathbf{D}} \eta \|_{L^2}^2 + \tau^2 \| \widetilde{B} \eta \|_{L^2}^2
- \tau \left\langle \eta , (\tilde{\p}_H \widetilde{B}) \eta \right\rangle_{L^2} 
\ge \left( \tau^2 c_2^2 - \tau c_1 \right) \big\| |d\varphi| \cdot \eta \big\|_{L^2}^2,
\end{split}
\end{equation*}
where the constants $c_1, c_2 > 0$ are independent of~$\eta$.
Since $|d\varphi| > 0$ almost everywhere, we conclude that
$\widetilde{\mathbf{D}}_\tau$ is injective whenever 
$\tau^2 c_2^2 - \tau c_1 > 0$.}
\end{proof}

\appendix

\section{Some analytic perturbation theory}
\label{sec:analytic}

The linear perturbation argument of \S\ref{sec:linear} requires a basic
ingredient from analytic perturbation theory in the spirit of
\cite{Kato}.  Since we were not able to find a reference for the precise
result we need, we have included a proof of it in this appendix for the sake of 
completeness.

Given complex Banach spaces $X$ and $Y$, denote by $\lL(X,Y)$ the Banach
space of bounded complex-linear operators $X \to Y$, 
abbreviate $\lL(X) := \lL(X,X)$, and let
$\Fred(X,Y) \subset \lL(X,Y)$ denote the open subset consisting of
Fredholm operators.  Since $\Fred(X,Y)$ carries a natural complex structure
as a subset of $\lL(X,Y)$, it makes sense to speak of holomorphic maps 
into $\Fred(X,Y)$, i.e.~maps which are Fr\'{e}chet differentiable
with complex-linear derivative.

\begin{prop}
\label{prop:analytic}
Suppose $\uU \subset \CC$ is a connected open subset and $\uU \to \Fred(X,Y) :
\tau \mapsto \mathbf{T}_\tau$ is a holomorphic map, and let
$$
Z = \{ \tau \in \uU\ |\ \text{$\mathbf{T}_\tau$ is not injective} \}.
$$
Then either $Z$ is a discrete subset of $\uU$, or $Z = \uU$.
\end{prop}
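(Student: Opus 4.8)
The plan is to reduce the global dichotomy to a local one, and then prove the local statement by a Fredholm-reduction argument followed by an appeal to the identity theorem for holomorphic functions of one complex variable. First I would fix an arbitrary point $\tau_0 \in \uU$ and set $\mathbf{T}_0 := \mathbf{T}_{\tau_0}$. Since $\mathbf{T}_0$ is Fredholm, choose closed complementary subspaces $X = \ker \mathbf{T}_0 \oplus X_1$ and $Y = \im \mathbf{T}_0 \oplus Y_1$ with $\dim \ker \mathbf{T}_0 < \infty$ and $\dim Y_1 = \dim \coker \mathbf{T}_0 < \infty$; let $P : Y \to Y_1$ be the projection along $\im \mathbf{T}_0$. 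For $\tau$ in a small disc $\DD_r(\tau_0) \subset \uU$, the restriction $\mathbf{T}_\tau|_{X_1} : X_1 \to \im \mathbf{T}_0$ is a small perturbation of the isomorphism $\mathbf{T}_0|_{X_1}$ and hence invertible, with inverse depending holomorphically on $\tau$ (inversion is holomorphic on the open set of invertible operators). This is the standard Lyapunov--Schmidt setup; I would make it precise that all the pieces vary holomorphically in $\tau$.

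Next I would carry out the Schur-complement / Lyapunov--Schmidt reduction. Writing $X \ni x = x_0 + x_1$ with $x_0 \in \ker \mathbf{T}_0$, $x_1 \in X_1$, the equation $\mathbf{T}_\tau x = 0$ splits, via the projections $\1 - P$ and $P$, into a pair of equations; the first can be solved for $x_1$ as a holomorphic linear function of $x_0$ and $\tau$, and substituting into the second yields a finite-dimensional \emph{reduced operator}
\begin{equation*}
\mathbf{R}(\tau) : \ker \mathbf{T}_0 \to Y_1,
\end{equation*}
holomorphic in $\tau \in \DD_r(\tau_0)$, with the property that $\mathbf{T}_\tau$ is injective on $\DD_r(\tau_0)$ if and only if $\mathbf{R}(\tau)$ is injective. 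Since $\dim \ker \mathbf{T}_0 = \dim Y_1$ might not hold, I would first reduce to the case of square matrices: if $\dim \ker \mathbf{T}_0 > \dim Y_1$ then $\mathbf{R}(\tau)$ is never injective, so $Z \cap \DD_r(\tau_0) = \DD_r(\tau_0)$ locally; and if $\dim \ker \mathbf{T}_0 \le \dim Y_1$, augment by a fixed injection to make the target dimension equal to the source dimension without changing injectivity, so that $\mathbf{R}(\tau)$ may be taken as a square matrix-valued holomorphic function. Then $\mathbf{T}_\tau$ is non-injective precisely when the scalar holomorphic function $f(\tau) := \det \mathbf{R}(\tau)$ vanishes. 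By the identity theorem on the connected disc $\DD_r(\tau_0)$, either $f \equiv 0$, in which case $\DD_r(\tau_0) \subset Z$, or the zeros of $f$ are isolated, in which case $Z \cap \DD_r(\tau_0)$ is discrete.

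Finally I would globalize. The above shows that every $\tau_0 \in \uU$ has a neighborhood on which $Z$ is either all of that neighborhood or a discrete set. Hence the set $\uU_1 := \{\tau_0 \in \uU \mid Z \text{ contains a neighborhood of } \tau_0\} = \interior Z$ is open, and its complement $\uU_2 := \{\tau_0 \mid Z \text{ is discrete near } \tau_0\}$ is also open; moreover $\uU = \uU_1 \cup \uU_2$. Since $\uU$ is connected, either $\uU_1 = \uU$, giving $Z = \uU$, or $\uU_2 = \uU$, in which case $Z$ is discrete in a neighborhood of each of its points, hence discrete as a subset of $\uU$ (a subset that is locally finite near every point of the ambient space is discrete and closed). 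I expect the main obstacle to be purely bookkeeping: verifying carefully that each object in the Lyapunov--Schmidt reduction (the complementary projections can be taken $\tau$-independent, but the partial inverse $(\mathbf{T}_\tau|_{X_1})^{-1}$, and hence $\mathbf{R}(\tau)$) genuinely depends holomorphically on $\tau$, so that $f = \det \mathbf{R}$ is an honest holomorphic function to which the one-variable identity theorem applies. Everything else is soft.
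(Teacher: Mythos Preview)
Your approach is essentially the same as the paper's: both reduce locally via the Schur complement / Lyapunov--Schmidt construction to a holomorphic family of linear maps $\ker\mathbf{T}_{\tau_0} \to \coker\mathbf{T}_{\tau_0}$ between finite-dimensional spaces (the paper writes this as $\Phi(\mathbf{T}_\tau) = \mathbf{D} - \mathbf{C}\mathbf{A}^{-1}\mathbf{B}$ and verifies $\ker\mathbf{T}_\tau \cong \ker\Phi(\mathbf{T}_\tau)$ by an explicit block-triangular factorization), and then argue that the non-injective locus is an analytic subset. Your globalization via connectedness is more explicit than the paper's, which simply invokes ``standard results on intersections of holomorphic curves with complex submanifolds,'' but the content is the same.

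One small point to tighten: the step ``augment by a fixed injection to make the target dimension equal to the source dimension without changing injectivity'' does not work as stated when $\dim\ker\mathbf{T}_0 < \dim Y_1$, since neither enlarging the source nor projecting the target by a \emph{fixed} ($\tau$-independent) map will preserve injectivity for all~$\tau$. The clean fix is to skip the square-matrix reduction entirely: with $k = \dim\ker\mathbf{T}_0 \le c = \dim Y_1$, the map $\mathbf{R}(\tau)$ fails to be injective exactly when all of its $k\times k$ minors vanish, so either every such minor is identically zero on $\DD_r(\tau_0)$ (giving $Z \supset \DD_r(\tau_0)$) or some minor is a nonzero holomorphic function, whose zero set is discrete and contains $Z \cap \DD_r(\tau_0)$. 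This is the elementary content behind the paper's appeal to the determinantal subvariety.
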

\begin{proof}
Given any $\mathbf{T}_0 \in \Fred(X,Y)$, there exist splittings into closed
linear subspaces
$$
X = V \oplus \ker\mathbf{T}_0, \qquad  Y = W \oplus \coker \mathbf{T}_0
$$
such that $\mathbf{T}_0|_{V}$ is an isomorphism $V \to W$.  Using this
splitting,
we can write any other $\mathbf{T} \in \Fred(X,Y)$ in block form as
$$
\mathbf{T} = \begin{pmatrix}
\mathbf{A} & \mathbf{B} \\
\mathbf{C} & \mathbf{D}
\end{pmatrix},
$$
and define $\oO \subset \Fred(X,Y)$ to be the
open neighborhood
of $\mathbf{T}_0$ for which the block $\mathbf{A}$ is invertible.
We can then define a holomorphic map
$$
\Phi : \oO \to \lL(\ker\mathbf{T}_0,\coker\mathbf{T}_0) :
\mathbf{T} \mapsto \mathbf{D} - \mathbf{C} \mathbf{A}^{-1} \mathbf{B}.
$$
We claim that for all $\mathbf{T} \in \oO$, $\ker \mathbf{T} \cong
\ker \Phi(\mathbf{T})$.  To see this, associate to $\mathbf{T}$ 
the isomorphism
$$
\Psi = \begin{pmatrix}
\1 & - \mathbf{A}^{-1} \mathbf{B} \\
0  & \1
\end{pmatrix} \in \lL(V \oplus \ker \mathbf{T}_0) = \lL(X).
$$
Then $\mathbf{T} \Psi = \begin{pmatrix}
\mathbf{A} & 0 \\
\mathbf{C} & \Phi(\mathbf{T})
\end{pmatrix}$,
and since $\mathbf{A}$ is invertible, 
$\ker\mathbf{T}\Psi = \{0\} \oplus \ker \Phi(\mathbf{T})$,
from which the claim follows.

Now if $\uU \to \Fred(X,Y) : \tau \to \mathbf{T}_\tau$ is a family of
operators depending holomorphically on~$\tau$, then fixing any
$\tau_0 \in \uU$ and placing $\mathbf{T}_{\tau_0}$ in the role of
$\mathbf{T}_0$ above, one can define $\Phi$ on a neighborhood of
$\mathbf{T}_{\tau_0}$ so that
$$
\tau \mapsto \Phi(\mathbf{T}_\tau)
$$
defines a holomorphic curve mapping into the finite-dimensional complex vector
space $\lL(\ker\mathbf{T}_{\tau_0},\coker\mathbf{T}_{\tau_0})$
for $\tau$ in a neighborhood of~$\tau_0$.  The set of all $\tau$
near~$\tau_0$ for which $\mathbf{T}_{\tau}$ is not injective then
corresponds to the intersections of this holomorphic curve with
the stratified complex subvariety of noninjective maps in
$\lL(\ker\mathbf{T}_{\tau_0},\coker\mathbf{T}_{\tau_0})$, which has
positive codimension.  The proposition thus follows from the standard
results on intersections of holomorphic curves with complex
submanifolds.
\end{proof}

\begin{bibdiv}
\begin{biblist}
\bibselect{wendlc}
\end{biblist}
\end{bibdiv}

\end{document}